\theoremstyle{definition}
\newtheorem{theorem}{Theorem}[section]
\newtheorem*{mainthm*}{Main Theorem}
\newtheorem{prop}[theorem]{Proposition}
\newtheorem{remark}[theorem]{Remark}
\newtheorem{ex}[theorem]{Example}
\newtheorem{definition}[theorem]{Definition}
\newtheorem{lemma}[theorem]{Lemma}
\newtheorem{thm}[theorem]{Theorem}
\newcommand{\Mod}[1]{\ (\mathrm{mod}\ #1)}
\newcommand{\q}{q^{\times}}
\newcommand{\Z}{\mathbb{Z}}
\newcommand{\R}{\mathbb{R}}
\newcommand{\Q}{\mathbb{Q}}
\newcommand{\F}{\mathbb{F}}
\newcommand{\C}{\mathbb{C}}
\newcommand{\PP}{\mathbb{P}}
\newcommand{\lcd}{\text{lcd}}
\newcommand{\Fsf}{\mathsf F}
\newcommand{\Lsf}{\mathsf L}
\newcommand{\Csf}{\mathsf C}
\title{Hypergeometric decomposition of Delsarte K3 pencils}
\date{\today}
\begin{document}

\author{Rachel Davis}
\address{Department of Mathematics,
University of Wisconsin-Madison, 480 Lincoln Drive
Madison WI 53706}
\email{rachel.davis@wisc.edu}
\thanks{The authors thank Rethinking Number Theory, an AIM Research Community, for convening and supporting this project. We thank ICERM for allowing us to meet in person through Collaborate@ICERM. We are indebted to John Voight and Tyler L. Kelly for thoughtful and inspirational conversations. We thank Tristan H\"ubsch for helpful comments. We appreciate the meticulous reading of an anonymous referee. We thank Asem Abdelraouf for the insight he provided on finite field hypergeometric functions. CoCalc, Edgar Costa, and Brian J. Ferguson provided key computational support.}

\author{Jessamyn Dukes}
\address{University of Denver, 2199 University Blvd, Denver CO 80210}
\email{jessamyn.dukes@du.edu}

\author{Thais Gomes Ribeiro}
\address{School of Mathematics, University of Birmingham, Ring Road N B152TT, UK}
\email{txg306@student.bham.ac.uk}

\author{Eli Orvis}
\address{University of Colorado at Boulder, Boulder CO 80309}
\email{eli.orvis@colorado.edu}

\thanks{The fourth author gratefully acknowledges the School of Mathematics at the University of Birmingham for the PhD funding that enabled her to dedicate time to this project and further emphasizes the invaluable advice she received from Tyler L. Kelly. She also thanks the University of Sydney for its support.}

\author{Adriana Salerno}
\address{National Science Foundation, 2415 Eisenhower Ave, Alexandria, VA 22314}
\email{asalerno@nsf.gov}
\thanks{For the fifth author, this material is based upon work supported by and while serving at the National Science Foundation. Any opinion, findings, conclusions or recommendations expressed in this material are those of the authors and do not necessarily reflect the views of the National Science Foundation. }

\author{Leah Sturman}
\address{Southern Connecticut State University, 501 Crescent Street New Haven CT 06515}
\email{sturmanL1@southernct.edu}

\author{Ursula Whitcher}
\address{Mathematical Reviews, American Mathematical Society,
535 W William St Ste 210, Ann Arbor MI 48103}
\email{uaw@umich.edu}
\thanks{The last author thanks the Simons Center for Geometry and Physics for hosting the thought-provoking workshop Murmurations in Arithmetic Geometry and Related Topics, where an early version of this work was presented.}

\begin{abstract}
We study five pencils of projective quartic Delsarte K3 surfaces. Over finite fields, we give explicit formulas for the point counts of each family, written in terms of hypergeometric sums. Over the complex numbers, we match the periods of the corresponding family with hypergeometric differential operators and series. We also obtain a decomposition of the incomplete $L$-function of each pencil in terms of hypergeometric $L$-series and Dedekind zeta functions. This gives an explicit description of the hypergeometric motives geometrically realised by each pencil.   
\end{abstract}
\maketitle
\markboth{\shorttitle}{\shorttitle}
\tableofcontents

\section{Introduction}

In \cite{I58}, Igusa noticed an intriguing relation between the periods and the point-counts of the Legendre family of elliptic curves $$C_{\lambda}: y^2z=x(x-z)(x-\lambda z).$$

\vspace{0.2cm}

He showed that for a prime $p,$ the trace of Frobenius $a_p(C_{\lambda})=p+1-\#C_{\lambda}(\mathbb{F}_p)$ satisfies $$a_p(C_{\lambda}) \equiv (-1)^{\frac{p-1}{2}} {_2F}_1\left(\tfrac{1}{2},\tfrac{1}{2};1,1 \mid \lambda\right) \Mod{p}.$$ Clemens referred to Igusa's finding as an instance of `Manin's unity of Mathematics' (see \autocite[Section 2.10, 2.11 and 2.12]{C80}). In \cite{O98}, using a \emph{finite field hypergeometric function}, Ono made this congruence into a precise formula for the number of points of $C_{\lambda}.$ For $q=p^r,$ he proved that  \begin{align*}
\#C_{\lambda}(\mathbb{F}_q)=q+1-(-1)^{\frac{q-1}{2}}H_q\left(\tfrac{1}{2},\tfrac{1}{2};1,1 \mid \lambda\right). 
\end{align*}

The relationship between periods and point counts has been further explored in many cases. A few examples are pencils of quartics in $\mathbb{P}^3$ (\cite{dwork, Katz2009, HD20}) and pencils of quintics in $\mathbb{P}^4$ (\autocite{C00},\autocite{C04}). 

A geometric property of the examples above is that they are Calabi-Yau varieties, that is, elliptic curves, K3 surfaces, and their higher-dimensional generalizations. These are varieties with trivial canonical class (simply connected in dimensions 2 and above).  They form a fascinating arena for the Langlands program (see for example \cite{C25} and \cite{S07}) and a natural testbed for generalizations of the murmurations phenomena observed in \cite{He_2024}. In the context of mathematical physics, computational experiments with Calabi-Yau threefolds have yielded examples of rank two attractor points, demonstrating unexpected relationships to classical modular forms and yielding new evidence for the Hodge conjecture (see \cite{elmiEtAl, attractor}). 

Computing $L$-functions is a problem of inherent great interest. K3 surfaces, and Calabi-Yau varieties more generally, provide a natural setting for such exploration. Inspired by Deligne's conjecture on the $L$-value at zero (see \cite{De79}), we pursue the philosophy that the $L$-function encodes the relation between periods and point counts mentioned above. 

Many of the best-understood examples of K3 and Calabi-Yau zeta and $L$-functions are hypergeometric, as are the Picard-Fuchs differential equations satisfied by their holomorphic forms. We may view this phenomenon as an instance of the hypergeometric motives philosophy, which holds that a single underlying object connects classical hypergeometric functions to geometry and arithmetic. The study of finite field hypergeometric function manifestations of this philosophy is an active area of research. For example, the authors of \cite{LTYZ} used the theory of hypergeometric motives to establish supercongruences associated to rigid Calabi-Yau varieties. 
The present project provides concrete higher-dimensional realizations of hypergeometric motives in the sense of \cite{RRV}.

Any smooth quartic hypersurface in $\PP^3$ defines a K3 surface. We focus on \emph{Delsarte} or \emph{invertible} pencils of quartic K3 hypersurfaces. These are given by one-parameter deformations of certain quartics defined by invertible matrices of nonnegative integers. The most famous such family is the Dwork pencil given by $x_0^4+x_1^4+x_2^4+x_3^4-4 \psi x_0x_1x_2x_3=0$; in this case, the matrix is the $4 \times 4$ identity matrix multiplied by 4. The arithmetic and geometry of the Dwork pencil is a subject of longstanding interest; see, for example, \cite{dwork, Katz2009, BINI2014173} and \cite{DUAN2020389}. As generalizations of Fermat hypersurfaces, Delsarte hypersurfaces are also of great interest.

There are ten Delsarte (or invertible) quartics in $\PP^3$ up to isomorphism. Deforming by $-d^T \psi x_0x_1x_2x_3$, where $d^T$ is a constant determined by the initial quartic and $\psi$ is our parameter, we obtain ten invertible pencils, as described in Table~\ref{table: introDelsarteList} below. See \S~\ref{S:invertible} for more details.

 In \cite{ZF18}, two of the present authors and their collaborators showed that the four invertible pencils $\Lsf_3\Fsf_1$, $\Lsf_2\Fsf_2$, $\Lsf_2\Lsf_2$, and $\Lsf_4$ share a zeta function factor with the Dwork pencil $\Fsf_4$, and that a similar phenomenon holds in higher dimensions. They identified another pair of pencils, labeled $\Csf_2\Fsf_2$ and $\Csf_2\Lsf_2$ in Table~\ref{table: introDelsarteList}, that share a zeta function factor with each other. We indicate these families with shared interesting zeta function factors using the Suit column in Table~\ref{table: introDelsarteList}. In \cite{HD20}, the same group of authors studied the collection of five pencils including the Dwork pencil in more detail, giving a complete hypergeometric decomposition of their $L$-functions.

\begin{table}[ht]
    \centering
    \caption{Delsarte quartic pencils}
    \begin{tblr}{colspec ={|c|c|c|c|c|},cell{2}{1} = {r=5}{c},cell{7}{1} = {r=2}{c}}
    \hline
   Suit & Polynomial & Label & $d^T$ & $p$ bad\\
    \hline 
    \hline 
$\diamondsuit$ & $x_0^4+x_1^4+x_2^4+x_3^4$ & $\Fsf_4$ & 4 & $2$\\
& $x_0^3x_1+x_1^3x_2+x_2^3x_0+x_3^4$ & $\Lsf_3\Fsf_1$ & 4 & $2,7$\\
& $x_0^3x_1+x_1^3x_0+x_2^4+x_3^4$ & $\Lsf_2\Fsf_2$ & 4 & $2$ \\ 
& $x_0^3x_1+x_1^3x_0+x_2^3x_3+x_3^3x_2$ & $\Lsf_2\Lsf_2$ & 4 & $2$\\ 
& $x_0^3x_1+x_1^3x_2+x_2^3x_3+x_3^3x_0$& $\Lsf_4$ & 4 & $2,5$\\ 
    \hline 
$\clubsuit$ &    $x_0^3x_1+x_1^4+x_2^4+x_3^4$ & $\Csf_2\Fsf_2$ & 12 & $2,3$\\
&    $x_0^3x_1+x_1^4+x_2^3x_3+x_3^3x_2$ & $\Csf_2\Lsf_2$ & 12 & $2,3$\\
    \hline     
$\symking$ &    $x_0^3x_1+x_1^4+x_2^3x_3+x_3^4$ & $\Csf_2\Csf_2$ & 6 & $3$\\
    \hline 
$\spadesuit$ & $x_0^3x_1+x_1^3x_2+x_2^4+x_3^4$& $\Csf_3\Fsf_1$ & 36& $2,3$\\ 
    \hline 
$\heartsuit$ & $x_0^3x_1+x_1^3x_2+x_2^3x_3+x_3^4$ & $\Csf_4$ & 27 & $3$\\
    \hline
    \end{tblr}
    \label{table: introDelsarteList}
\end{table}
 
 Meanwhile, in \cite{KM17} Kloosterman studied Delsarte pencils $X_\psi$ in $\PP^n$ from a geometric viewpoint, giving an alternative description of the common factors and obtaining a Frobenius-stable decomposition of the cohomology. Building on results of Shioda in \cite{Shioda}, Kloosterman used a covering map from a one-parameter deformation $Y_\psi$ of a Fermat variety of degree $d_{\Fsf}$; this degree is always divisible by $d^T$ and may be significantly higher than the degree of the original pencil. (See \cite{K13} for another interesting application of Shioda maps to invertible hypersurfaces.) Kloosterman showed that, working over a field $\F_q$ whose order satisfies an appropriate divisibility condition, the middle cohomology $H^{n-1}(X_\psi)$ of the Delsarte pencil decomposes as a direct sum of Frobenius-stable subspaces. Combining with results from his earlier work \cite{kloostermanFermat}, Kloosterman observed that this decomposition will yield hypergeometric factors of the zeta function of $X_\psi$ when $q \equiv 1 \pmod{d_{\Fsf}}$. In particular, when $X_\psi$ is a K3 surface or a Calabi-Yau variety, the decomposition will always yield a hypergeometric factor corresponding to the holomorphic form. In the K3 case, the remaining subspace of middle cohomology is generated by algebraic curves; generators are given in \cite{KM17} for many of these cases.

The hypergeometric decomposition obtained for five Delsarte pencils in \cite{HD20} is consistent with the decomposition of $H^{n-1}(X_\psi)$ obtained in \cite{KM17} but does not make the restriction that $q \equiv 1 \pmod{d_{\Fsf}}$. In the current work, we determine $L$-functions for the remaining five K3 Delsarte pencils, $\Csf_2\Fsf_2$, $\Csf_2\Lsf_2$, $\Csf_2\Csf_2$, $\Csf_3\Fsf_1$, and $\Csf_4$. Our methods are explicit: we obtain formulas for the point counts in terms of finite field hypergeometric functions. Away from a finite set of bad primes, as in \cite{HD20}, we describe the $L$-functions of our pencils in terms of hypergeometric $L$-functions. 

\begin{table}
    \centering
     \caption{Hypergeometric parameters}
    \begin{tblr}{|c|c|c|c|}
    \hline
    Suit & $\bm{\alpha}$ & $\bm{\beta}$ & $t$ \\
    \hline  
    \hline 
    $\clubsuit_0$ & $\scriptstyle{\frac{1}{12},\frac{1}{6},\frac{5}{12},\frac{7}{12},\frac{5}{6},\frac{11}{12}}$ & $\scriptstyle{\frac{1}{2},\frac{1}{3},\frac{2}{3},1,1,1}$ & $\scriptstyle{2^{-10}3^{-6}\psi^{-12}}$\\
    
    $\clubsuit_1$ &  $\scriptstyle{\frac{1}{12},\frac{5}{12},\frac{7}{12}, \frac{11}{12}}$ & $\scriptstyle{\frac{1}{4}, \frac{1}{2}, \frac{3}{4},1}$& $\scriptstyle{2^{-10}3^{-6}\psi^{-12}}$\\
    $\clubsuit_2$ & $\scriptstyle{\frac{1}{12},\frac{1}{6},\frac{5}{12},\frac{5}{6}}$ & $\scriptstyle{\frac{1}{8},\frac{5}{8},\frac{1}{4},1}$ & $\scriptstyle{-2^{-10}3^{-6}\psi^{-12}}$\\
    $\clubsuit_3$ & $\scriptstyle{\frac{7}{12},\frac{1}{6},\frac{11}{12},\frac{5}{6}}$ & $\scriptstyle{\frac{3}{8},\frac{7}{8},\frac{3}{4},1}$& $\scriptstyle{-2^{-10}3^{-6}\psi^{-12}}$\\
    $\clubsuit_4$ & $\scriptstyle{\frac{1}{24},\frac{5}{24},\frac{7}{24},\frac{11}{24},\frac{13}{24},\frac{17}{24},\frac{19}{24},\frac{23}{24}}$ &
    $\scriptstyle{\frac{1}{6},\frac{1}{4},\frac{1}{3},\frac{1}{2},\frac{2}{3},\frac{3}{4},\frac{5}{6},1}$ & $\scriptstyle{2^{-10}3^{-6}\psi^{-12}}$\\
    \hline 
      
    $\symking_0$& $\scriptstyle{\frac{1}{6},\frac{1}{3},\frac{2}{3},\frac{5}{6}}$& $\scriptstyle{\frac{1}{2},1,1,1}$ &$\scriptstyle{2^{-4}\psi^{-6}}$\\
    $\symking_1$     & $\scriptstyle{\frac{1}{12},\frac{5}{12},\frac{7}{12},\frac{11}{12}}$ & $\scriptstyle{\frac{1}{4},\frac{1}{2},\frac{3}{4},1}$ & $\scriptstyle{2^{-4}\psi^{-6}}$\\
    $\symking_2$     & $\scriptstyle{\frac{5}{6},\frac{1}{3}}$ & $\scriptstyle{\frac{2}{3},1}$ &$\scriptstyle{2^{-4}\psi^{-6}}$\\
    $\symking_3$& $\scriptstyle{\frac{1}{6},\frac{2}{3}}$ & $\scriptstyle{\frac{1}{3},1}$ &$\scriptstyle{2^{-4}\psi^{-6}}$\\
    $\symking_4$     & $\scriptstyle{\frac{5}{12},\frac{11}{12}}$ & $\scriptstyle{\frac{5}{6},1}$&$\scriptstyle{2^{-4}\psi^{-6}}$\\ 
    $\symking_5$& $\scriptstyle{\frac{1}{12},\frac{7}{12}}$ & $\scriptstyle{\frac{1}{6},1}$ &$\scriptstyle{2^{-4}\psi^{-6}}$\\
    \hline  
     $\spadesuit_0$& $\scriptstyle{\frac{1}{36},\frac{1}{18},\frac{5}{36},\frac{7}{36},\frac{5}{18},\frac{11}{36},\frac{13}{36},\frac{7}{18},\frac{17}{36},\frac{19}{36},\frac{11}{18},\frac{23}{36},\frac{25}{36},\frac{13}{18},\frac{29}{36},\frac{31}{36},\frac{17}{18},\frac{35}{36}}$ & $\scriptstyle{\frac{1}{8},\frac{1}{4},\frac{3}{8},\frac{1}{2},\frac{5}{8},\frac{3}{4},\frac{7}{8},\frac{1}{7},\frac{2}{7},\frac{3}{7},\frac{4}{7},\frac{5}{7},\frac{6}{7},\frac{1}{3},\frac{2}{3},1,1,1}$&$\scriptstyle{2^{-48}3^{-30}7^{-7}\psi^{-36}}$\\
    \hline 
    $\heartsuit_0$ & $\scriptstyle{\frac{1}{27},\frac{2}{27},\frac{4}{27},\frac{5}{27},\frac{7}{27},\frac{8}{27},\frac{10}{27},\frac{11}{27},\frac{13}{27},\frac{14}{27},\frac{16}{27},\frac{17}{27},\frac{19}{27},\frac{20}{27},\frac{22}{27},\frac{23}{27},\frac{25}{27},\frac{26}{27}}$ & $\scriptstyle{\frac{1}{3},\frac{2}{3},\frac{1}{2},\frac{1}{5},\frac{2}{5},\frac{3}{5},\frac{4}{5},\frac{1}{6},\frac{5}{6},\frac{1}{7},\frac{2}{7},\frac{3}{7},\frac{4}{7},\frac{5}{7},\frac{6}{7},1,1,1}$ & $\scriptstyle{2^{-6}3^{-24}5^{-5}7^{-7}\psi^{-27}}$\\
    \hline
    \end{tblr}
    \label{table: alphaBetaSubscripts}
\end{table}

We develop multiple new techniques in order to address challenges that arise in our analysis. The first challenge we faced in this work appears when dealing with the point counts of the two families $\Csf_2\Fsf_2$ and $\Csf_2\Csf_2.$ For $\Csf_2\Fsf_2,$ for example, if $q$ is a power of a prime satisfying $q \equiv 1 \Mod{4},$ then some summands (e.g. Prop \ref{prop: propc2f2}(c)) that naturally appear in the point counts are not defined over $\Q$ (in the sense of \cite{BCM15}). In \cite{HD20}, a similar situation led the authors to introduce a new definition of hypergeometric functions over finite fields, called \textit{splittable}. 

Here we must go one step further, as even the splittable hypergeometrics do not capture some of the subtleties of our computations. Our approach builds on the work of Asem Abdelraouf in \cite{AA25} and his newly defined finite hypergeometric function associated to a \textit{gamma triple} of parameters $(\bm{\gamma},\bm{\delta},N) \in \Z^d\times \Z^d \times \Z_{>0}.$ Abdelraouf's new definition not only generalizes \cite{BCM15}, but also expresses exactly the type of sums that we were unable to connect to the previously known finite hypergeometric functions. Thanks to the new approach, we define $L$-functions for \textit{gamma triples}. 

We note that $\Csf_2\Csf_2,$ one of the the cases where we require the new definition of a finite field hypergeometric function, is also among the families where \cite{KM17} was not able to give a full list of generators for algebraic curves. In this sense, our need for $L$-functions not defined over $\Q$ detects the fact that the K3 surfaces in question have algebraic curves defined over more elaborate number fields.

With our new framework, we can factorize the $L$-functions of our K3 families as shown in our main theorem. We use the notation for hypergeometric parameters and the parameter of our pencil established in Table~\ref{table: alphaBetaSubscripts}.

\begin{mainthm*}\label{T:main}
Let $S=S(\bullet,\psi)$ denote the set of bad primes together with the primes dividing the numerator or denominator of $t$ or $t-1,$ where $\bullet$ denotes one of our pencils. The $L$-functions of the Delsarte pencils of K3 surfaces $\Csf_2\Fsf_2$, $\Csf_2\Lsf_2$, $\Csf_2\Csf_2$, $\Csf_3\Fsf_1$, and $\Csf_4$ decompose as products of hypergeometric and Dirichlet $L$-functions in the following way. 

\begin{itemize}[label={}]
\item \begin{align*}
&L_S(X_{\Csf_4,\psi},s)=\zeta_{S,\Q(i\sqrt{3})}(s-1)\zeta_{S}(s-1)\cdot L_S\left(H(\bm{\alpha}_{\heartsuit_0},\bm{\beta}_{\heartsuit_0} \mid t_{\heartsuit_0}),s\right).
\end{align*}
\begin{align*}
L_S(X_{\Csf_2\Fsf_2,\psi},s)&=L_S(\Q(\zeta_8)|\Q,s)\cdot L_S(H(\bm{\alpha}_{\clubsuit_0},\bm{\beta}_{\clubsuit_0}|t_{\clubsuit_0}),s)\cdot L_S(H(\bm{\alpha}_{\clubsuit_1},\bm{\beta}_{\clubsuit_1}|t_{\clubsuit_1}),s-1,\phi_{-1}) \\&\cdot L_S(F((4,2,3,3,-12,1,-1),(0,-1,0,1,0,0,0),4 \mid t_{\clubsuit_2}),\Q(i),s-1,\phi_{\sqrt{-1}}). 
\end{align*}
\begin{align*}
L_S(X_{\Csf_3\Fsf_1,\psi},s)&=L_{S}(\Q(\zeta_8)\mid \Q,s-1)\cdot L_S\left(H(\bm{\alpha}_{\spadesuit_0},\bm{\beta}_{\spadesuit_0}\mid t_{\spadesuit_0}),s\right). 
\end{align*}
\begin{align*}
L_S(X_{\Csf_2\Lsf_2,\psi},s)&=\zeta_{S,\Q(i\sqrt{3})}(s-1)^2\zeta_{S,\Q(i)}(s-1)\zeta_S(s-1)\cdot L_S\left(H(\bm{\alpha}_{\clubsuit_0},\bm{\beta}_{\clubsuit_0}\mid t_{\clubsuit_0}),s\right)\\&\cdot L_{S}\left(H\left(\bm{\alpha}_{\clubsuit_4},\bm{\beta}_{\clubsuit_4}\mid t_{\clubsuit_4}\right),s-1,\phi_{-12}\phi_{\psi}\right). 
\end{align*}
\begin{align*}
L_S(X_{\Csf_2\Csf_2,\psi},s)=\zeta_S(s-1) &\zeta_{S,\Q(i\sqrt{3})}(s-1)^2 \cdot  L_S(H(\bm{\alpha}_{\symking_0},\bm{\beta}_{\symking_0}\mid t_{\symking_0}),s)\cdot L_S(H(\bm{\alpha}_{\symking_1},\bm{\beta}_{\symking_1}\mid t_{\symking_1}),s-1,\phi_{-6\psi}) \\&\cdot L_S(F((2,1,2,1,-6),(0,0,1,-1,0),3\mid t_{\symking_3}),\Q(\sqrt{-3}),s-1)\\&\cdot L_S(F((2,1,2,1,-6),(3,0,-1,1,3),6\mid t_{\symking_3}),\Q(\zeta_6),s-1,\phi_{-6\psi}).    
\end{align*}
\end{itemize}
\end{mainthm*}

In each case, we have one hypergeometric factor associated to the holomorphic form. The appearance of additional hypergeometric and Dirichlet $L$-function factors aligns with the decomposition of $H^{2}(X_\psi)$ identified by Kloosterman. 

The hypergeometric parameters are consistent with the parameters arising in hypergeometric Picard-Fuchs equations satisfied by elements of $H^2(X_\psi, \C)$.  
When computing the Picard-Fuchs equations, we face another challenge that was not present in \cite{HD20}. The Griffiths-Dwork diagrammatic method used by those authors to compute the Picard-Fuchs equations is not as effective in our situation. In order to address this challenge, we use the recent work of Adolphson and Sperber in \cite{AS23} to obtain the hypergeometric parameters associated with the Picard-Fuchs differential operators.

The plan of the paper is as follows. 
In \S~\ref{S:invertible}, after a brief discussion of the properties of K3 surfaces, we review the classification of Delsarte and invertible polynomials, explaining our choice of deformation constant $d^T$ and identifying groups of symmetries for each pencil. In \S~\ref{S:PFequations}, we apply the Adolphson--Sperber method for computing hypergeometric differential operators to the Picard-Fuchs equations of K3 surfaces in $\PP^3$, and compute the resulting collections of Picard-Fuchs equations in our cases of interest. In \S~\ref{S:finiteHyp}, we discuss finite field hypergeometric functions and the associated hypergeometric $L$-functions; in particular, we show how to define hypergeometric $L$-functions for collections of parameters that may not be defined over $\Q$, building on work of \cite{AA25}. In \S~\ref{S:numPoints}, we obtain explicit formulas for the point counts. We validate each of our formulas computationally. In \S~\ref{S:Lseries}, we first prove that the hypergeometric parameters in our finite field formulas match the parameters of the hypergeometric Picard-Fuchs equations (see Theorem~\ref{T:hypergeometricMatch}). We then prove our \hyperref[T:main]{Main Theorem} by assembling the point count formulas into $L$-series.

\section{Invertible pencils and Delsarte K3 surfaces}\label{S:invertible}

In the present work, we study a collection of K3 surface pencils that arise naturally in the setting of invertible polynomials. We recall some background about K3 surfaces and establish notation for the examples of interest.

Let $k$ be a field. A K3 surface over $k$ is a complete non-singular variety $X$ of dimension two such that $\omega_X=\Omega^2_{X/k} \simeq \mathcal{O}_X$ and $H^1(X,\mathcal{O}_X)=0.$ Over $\C$, the Hodge diamond of a K3 surface has the following form: \begin{center}\begin{tabular}{ccccc}
 &  & $1$ &  &  \\
 & $0$ &  & $0$ &  \\
$1$ &  & $20$ &  & $1$ \\
 & $0$ &  & $0$ &  \\
 &  & $1$ &  &  \\
\end{tabular}
\end{center}

Smooth quartic hypersurfaces in $\mathbb{P}_k^3$ are K3 surfaces. 

Let $A=(a_{ij})_{0 \leq i,j \leq n}$ be an $(n+1)\times (n+1)$ matrix with integer coefficients. We say that the polynomial $$F_A=\sum_{i=0}^n\prod_{j=0}^n x_j^{a_{ij}} \in \Z[x_0,\dots,x_n]$$ is \emph{invertible} when the following conditions are satisfied:

\begin{enumerate}
    \item $\det(A) \neq 0.$
    \item There exist positive integers called \emph{weights} $q_j$ satisfying the condition that $d : = \sum_{j=0}^n q_j a_{ij}$ is the same constant for all $i$.
    \item $F_A: k^{n+1} \to k$ has a unique critical point at the origin. 
\end{enumerate}
 
We say an invertible polynomial is \emph{Delsarte} when the weights are all 1. A Delsarte polynomial defines a smooth hypersurface in $\PP^n$; more generally, any invertible polynomial defines a quasismooth hypersurface in $W\mathbb{P}^n(q_0,\dots,q_n)$. Note that a Delsarte polynomial defined by an $(n+1)\times(n+1)$ matrix gives a smooth degree $n+1$ hypersurface in $\mathbb{P}^n$, that is, a Calabi-Yau hypersurface. 

The \textit{transposed polynomial} of $F_A$ is the polynomial determined by $A^T.$ Since $A^T$ has nonnegative entries, $F_{A^T}$ is well defined. The transposed polynomial is a quasihomogeneous polynomial that defines a hypersurface in $W\mathbb{P}^n(q_0,\dots,q_n).$ The weights $q_0,\dots,q_n$ are called the \textit{dual weights} of $F_A$ and we denote their sum by $d^T.$ One of the reasons for interest in invertible polynomials and their transposes is that they are essential ingredients for Berglund--H\"ubsch--Krawitz (BHK) mirror symmetry; see \cite{K08} and \cite{C18} for further discussion and examples, and \cite{ZF18} for an application of these ideas to zeta functions. 

Let $F_A \in \Z[x_0,\dots,x_n]$ be a Delsarte polynomial. The \textit{invertible pencil} associated to it is the pencil of hypersurfaces \begin{align}\label{eq: pencil}
X_{A,\psi}=V(F_A- d^T \psi x_0 \cdots x_n) \subset \mathbb{P}_k^n.    
\end{align}

Our choice of constant $d^T$ in the deformation term follows the convention of previous work in the math and physics literature on quintic Calabi-Yau threefolds (cf. \cite{CX00}). 

Invertible polynomials and the corresponding varieties admit several interesting abelian symmetry groups. The group
$$\text{Aut}(F_A)=\{(\lambda_0,\dots,\lambda_n) \in (\C^{\times})^n:F_A((\lambda_0x_0,\dots,\lambda_nx_n))=F_A(x_0,\dots,x_n)\}$$ is called the group of \textit{diagonal automorphisms} of $F_A.$ Its subgroup satisfying $\lambda_0\cdots \lambda_n=1$ will be denoted by $\text{SL}(F_A).$ Finally, the subgroup of $\text{SL}(F_A)$ generated by $$\rho=(\exp(\tfrac{2\pi i}{n+1}),\dots, \exp(\tfrac{2\pi i}{n+1}))$$ will be denoted by $J(F_A)$ and called the \textit{trivial diagonal symmetries.} Since the elements of $\text{Aut}(F_A)$ that preserve the pencil $X_{A,\psi}$ are the elements of $\text{SL}(F_A),$ we define the \textit{symmetry group} $G_{F_A}$ of $X_{A,\psi}$ by $\text{SL}(F_A)/J(F_A)$. One can compute the automorphism group of each polynomial using \cite[Proposition 2]{ABS11}.

There is an important result concerning the classification of invertible polynomials over $\mathbb{C}$, which extends to a classification of their corresponding invertible pencils:

\begin{theorem}[\cite{KS92}]\label{T: invertibleKS}
Any invertible polynomial can be written up to permutation of the variables as a disjoint sum of the following atomic types: 
\begin{enumerate}
    \item Fermat: $x_0^{a_1}+x_1^{a_2}+\cdots+x_k^{a_k}$
    \item Loop: $x_0^{a_0}x_1+x_1^{a_1}x_2+\cdots+x_{k-1}^{a_{k-1}}x_k+x_k^{a_k}x_0$
    \item Chain: $x_0^{a_0}x_1+x_1^{a_1}x_2+\cdots+x_{k-1}^{a_{k-1}}x_k+x_k^{a_k}$
\end{enumerate}
\end{theorem}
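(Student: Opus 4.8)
The plan is to recover the Kreuzer--Skarke argument, which is essentially combinatorial: one reads off the admissible shapes of the exponent matrix $A$ from the requirement that $F_A$ have an isolated critical point at the origin (working, as in the statement, over $\C$, or more precisely over $\bar k$). Write $F_A=\sum_{i=0}^n M_i$ with $M_i=\prod_j x_j^{a_{ij}}$, so that $F_A$ has exactly $n+1$ monomials. First I would record two easy reductions: every variable $x_j$ must occur in at least one $M_i$ (otherwise the whole $x_j$-axis consists of critical points, contradicting condition (3)), and the condition $\det A\neq 0$ together with the weight condition (2) forces the monomials to be "independent enough" that the degenerate overlaps handled below cannot occur.

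The core step is the claim that for each variable $x_j$ there is a monomial of the shape $x_j^{m}$ or $x_j^{m}x_k$ with $k\neq j$ --- call such a monomial a \emph{pointer at $x_j$}. I would prove this by contradiction: if $x_\ell$ admitted no pointer, then every monomial divisible by $x_\ell$ would be divisible either by a higher power of $x_\ell$ times a further variable or by $x_\ell$ times at least two further distinct variables; restricting the Jacobian equations $\partial F_A/\partial x_0=\cdots=\partial F_A/\partial x_n=0$ to a suitably chosen coordinate subspace then produces a positive-dimensional family of common zeros, contradicting condition (3). I expect \textbf{this to be the main obstacle}: making the dimension count on the Jacobian ideal precise and uniform over all possible monomial shapes is the delicate part of the whole argument, whereas everything afterward is bookkeeping with a finite graph.

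Granting the claim, I would choose one pointer per variable and form the functional digraph $\pi$ on $\{0,\dots,n\}$, setting $\pi(j)=k$ when the chosen pointer at $x_j$ is $x_j^{m}x_k$ and $\pi(j)=j$ when it is a pure power $x_j^m$, taking care of exponent-$1$ ambiguities (a monomial $x_j x_k$ could a priori serve as the pointer at both $x_j$ and $x_k$) so that distinct variables receive distinct monomials. Since there are $n+1$ variables and $n+1$ monomials, every monomial is then a pointer, and each monomial involves only its source and its target variable; hence $F_A$ decomposes as a sum over the connected components of the digraph of $\pi$, with each component carrying exactly as many monomials as it has variables. A functional digraph on a finite set is a disjoint union of "$\rho$-shapes"--- a single oriented cycle with rooted in-trees attached --- so the classification reduces to ruling out the forbidden configurations.

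Finally I would close the argument with two further isolated-singularity arguments, each a localized version of the core step: (i) no in-tree may branch --- a branch $\pi(j)=\pi(j')=k$ with $j\neq j'$ makes $\partial F_A/\partial x_k$ contain a summand $x_j^{m}+x_{j'}^{m'}+\cdots$, which vanishes along a curve inside $\{x_k=0\}$ (and the other coordinates of the component set appropriately), contradicting condition (3); and (ii) if the cycle of a component has length $\geq 2$, no in-tree edge may enter a cycle vertex, by the same mechanism applied at that vertex. What survives per component is exactly: a length-$1$ cycle alone (a Fermat monomial $x_j^{a_j}$), a directed path feeding into a length-$1$ cycle (a chain), or a pure cycle of length $\geq 2$ (a loop). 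Since the components lie on disjoint sets of variables, this yields the claimed decomposition up to permutation of the variables; a direct check that each atomic type does admit weights in the sense of (2) and has an isolated singularity confirms that the three atomic types are precisely the building blocks.
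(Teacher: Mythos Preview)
The paper does not prove this theorem at all: it is stated with a citation to \cite{KS92} and used as a black box to classify the Delsarte quartics in Table~\ref{table: classif}. There is therefore no ``paper's own proof'' to compare your proposal against.

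That said, your reconstruction is recognisably the Kreuzer--Skarke argument and the overall architecture is sound: the pointer claim, the functional digraph, and the elimination of branching and of in-trees attached to long cycles are exactly the right steps. Two small cautions. First, your description of a Fermat atomic type as $x_0^{a_1}+\cdots+x_k^{a_k}$ (as in the stated theorem) really means a \emph{sum of Fermat monomials}, i.e.\ each variable in that block has its own pointer fixed at itself; your digraph picture already handles this correctly (a disjoint union of length-$1$ cycles), so just be sure your write-up matches. Second, the ``main obstacle'' you flag---making the Jacobian dimension count precise---does require a little more care than you sketch: the standard way is to show that if $x_\ell$ has no pointer, then setting $x_\ell=1$ and all variables not appearing together with $x_\ell$ to zero leaves a system whose zero set has positive dimension, and one must check that at least one other variable survives in that system. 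This is routine but worth spelling out; otherwise your outline is correct.
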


In the Delsarte case, where the degree of each monomial is clear, we use the notational convention that $\Fsf_k$ represents a Fermat of length $k$, $\Lsf_k$ represents a loop of length $k$, and $\Csf_k$ represents a chain of length $k$. For example, $\Lsf_3\Fsf_1$ corresponds to the invertible polynomial $x_0^3x_1+x_1^3x_2+x_2^3x_0+x_3^4$ and, by abuse of notation, to the corresponding invertible pencil. For the case of Delsarte quartics, the classification in Theorem~\ref{T: invertibleKS} gives us the 10 possibilities presented in Table~\ref{table: classif}.

\begin{table}[ht]
    \centering
    \caption{Delsarte quartics}
    \begin{tblr}{colspec ={|c|c|c|c|c|},cell{2}{1} = {r=5}{c},cell{7}{1} = {r=2}{c}}
    \hline
Suit   & Polynomial & Label & Dual weights & Symmetries of the pencil\\
    \hline 
    \hline 
$\diamondsuit$ & $x_0^4+x_1^4+x_2^4+x_3^4$ & $\Fsf_4$ & 1, 1, 1, 1 & $(\Z/4\Z)^2$\\
& $x_0^3x_1+x_1^3x_2+x_2^3x_0+x_3^4$ & $\Lsf_3\Fsf_1$ & 1, 1, 1, 1& $\Z/7\Z$\\
& $x_0^3x_1+x_1^3x_0+x_2^4+x_3^4$ & $\Lsf_2\Fsf_2$ & 1, 1, 1, 1 &$\Z/8\Z$\\ 
& $x_0^3x_1+x_1^3x_0+x_2^3x_3+x_3^3x_2$& $\Lsf_2\Lsf_2$ & 1, 1, 1, 1 &$(\Z/4\Z) \times (\Z/2\Z)$\\ 
& $x_0^3x_1+x_1^3x_2+x_2^3x_3+x_3^3x_0$& $\Lsf_4$ & 1, 1, 1, 1 & $\Z/5\Z$\\ 
    \hline 
$\clubsuit$ &     $x_0^3x_1+x_1^4+x_2^4+x_3^4$ & $\Csf_2\Fsf_2$ & 4, 2, 3, 3 & $\Z/4\Z$\\
&     $x_0^3x_1+x_1^4+x_2^3x_3+x_3^3x_2$ & $\Csf_2\Lsf_2$ & 4, 2, 3, 3 & $\Z/2\Z$\\
    \hline     
$\symking$ &   $x_0^3x_1+x_1^4+x_2^3x_3+x_3^4$&$\Csf_2\Csf_2$ & 2, 1, 2, 1 & $\Z/6\Z$ \\
    \hline 
$\spadesuit$  & $x_0^3x_1+x_1^3x_2+x_2^4+x_3^4$& $\Csf_3\Fsf_1$ & 12, 8, 7, 9&\text{trivial}\\ 
    \hline 
 $\heartsuit$   & $x_0^3x_1+x_1^3x_2+x_2^3x_3+x_3^4$ & $\Csf_4$ & 9, 6, 7, 5 &\text{trivial} \\
    \hline
    \end{tblr}
    \label{table: classif}
\end{table}

Kloosterman studied Delsarte pencils in \cite{KM17} and identified several interesting subspaces of cohomology. We now recall this construction in the cases of interest to us. Let $X_{\psi}$ be a Delsarte pencil with defining matrix $A$, and $d_{\Fsf}$ the smallest integer such that $B:=d_{\Fsf}A^{-1}$ has integer entries and define $b=(1,\dots,1)^TB.$ Let $$G=\bigg\{(g_0,\dots,g_n) \in (\Z/d_{\Fsf} \Z)^n : \sum_{i=1}^ng_ib_i=0\bigg\} \quad \text{ and } \quad Y_{\psi}: \sum_{i=0}^ny_i^d-d^T\psi y_0 \cdots y_n=0.$$ 
 
Then $X_{\psi}$ is birational to $Y_{\psi}/G$ via the rational map induced by \begin{align*}
\mathbb{P}^n &\to W\mathbb{P}^n(w_0,\dots,w_n) \\
 (y_0: \cdots: y_n)  &\mapsto \left(\prod_{i=1}^ny_i^{b_{0,i}}: \cdots :\prod_{i=1}^ny_i^{b_{n,i}}\right).
\end{align*} 

Suppose that we are working over a finite field $\F_q$ such that $q \equiv 1 \Mod{d_{\Fsf}}.$ Kloosterman proved in \cite{KM17} that $H^{n-1}(X_\psi)=H^{n-1}(Y_\psi)^{G_{\text{tor}}} \oplus W_\psi \oplus C,$ where $G_{\text{tor}}$ is the maximal torus group of symmetries on the Fermat pencil $Y_\psi,$ $W_{\psi}$ and $C$ are Frobenius-stable subspaces and $H^{n-1}(Y_\psi)^{G_{\text{tor}}} \oplus W_\psi = H^{n-1}(Y_\psi)^G.$ Using his previous work in (\cite{kloostermanFermat}), Kloosterman observed that $H^{n-1}(Y_\psi)^{G_{\text{tor}}}$ and $W_\psi$ are associated to hypergeometric factors of the zeta function of $X_\psi$ when appropriate divisibility conditions (which we relax in the present work) are satisfied. Precisely, $H^{n-1}(Y_\psi)^{G_{\text{tor}}}$ corresponds to the holomorphic piece and $W_\psi$ to the other hypergeometric pieces.

Table \ref{table: kloost} shows the dimensions of the Frobenius-stable subspaces for our five Delsarte pencils. The column ``PF order" refers to the dimension of the subspace spanned by the holomorphic form and its derivatives with respect to the parameter $\psi$, and, thus, to the order of the Picard-Fuchs equation associated to the holomorphic form. Note that in each case the sum of the last three columns is 21, the dimension of primitive cohomology for a quartic in $\PP^3$.

\begin{table}[ht!]
    \centering
    \caption{Frobenius-stable subspaces}
    \begin{tblr}{colspec ={|c|c|c|c|c|c|c|},cell{2}{1} = {r=2}{c}}
    \hline
Suit   & Label & PF Order & $\dim W_{\psi}$ & $\dim C$\\
    \hline 
    \hline 
$\clubsuit$ & $\Csf_2\Fsf_2$ & 6 & 12 & 3\\
& $\Csf_2\Lsf_2$ & 6 & 8 & 7\\
    \hline     
$\symking$ & $\Csf_2\Csf_2$ & 4 & 12 & 5\\
    \hline 
$\spadesuit$  & $\Csf_3\Fsf_1$ & 18 & 0 & 3\\ 
    \hline 
 $\heartsuit$   & $\Csf_4$ & 18 & 0 & 3\\
    \hline
    \end{tblr}
    \label{table: kloost}
\end{table}

In our \hyperref[T:main]{Main Theorem}, we obtain extra hypergeometric factors of the $L$-function precisely when $W_{\psi}$ is nontrivial, and we obtain Dirichlet factors precisely when $C$ is nontrivial.

\section{Picard-Fuchs equations}\label{S:PFequations}

The goal of this section is to identify hypergeometric periods associated with the five Delsarte families. We give a brief overview of the notions of periods, Picard-Fuchs equations, hypergeometric differential operators and hypergeometric series and then apply computational results of \cite{G11} and \cite{AS23}.

\subsection{Periods and Picard-Fuchs equations}
Let $X$ be a smooth hypersurface in $\mathbb{P}^n$ defined by the zeros of the degree $d$ homogeneous polynomial $F \in \C[x_0,\dots,x_n].$ We denote by $A^n(X)$ the space of rational $n$-forms in $\mathbb{P}^n$ with poles along $X$ and say that $X$ is the \textit{polar locus} of such forms. Alternatively, we can define $A^n(X)$ as the regular $n$-forms in $\mathbb{P}^n \setminus X.$ By \cite[Corollary 2.11]{G69}, every $\omega \in A^n(X)$ can be written as $$\omega=x_0\cdots x_n\frac{Q(x_0,\dots,x_n)}{F(x_0,\dots,x_n)^k} \Omega_1,$$ where $k \geq 0,$ $Q \in \C[x_0,\dots,x_n]$ is homogeneous of degree $k\deg(F)-(n+1)$ and $$\Omega_1=\frac{1}{x_0\cdots x_n}\sum_{i=0}^n(-1)^ix_i \text{d}x_0 \wedge \cdots \wedge \text{d}x_{i-1} \wedge \text{d}x_{i+1} \wedge \cdots \wedge \text{d}x_n.$$

Let $[\mathbb{P}^{n-1} \cdot X]$ denote the homology class of a hyperplane section of $X$ and let $\eta \in H^{2}(X,\C)$ denote its Poincaré dual. We define \begin{align*}
H^{n-1}_{\text{prim}}(X,\C)=\{[\sigma] \in H^{n-1}(X,\C) : \sigma \cdot \eta =0\},
\end{align*}
which can be geometrically interpreted as the classes in $H^{n-1}(X,\C)$ that are orthogonal to the hyperplane class. An analogous definition holds for $H^{n-k,k-1}_{\text{prim}}(X).$ 

The algebraic de Rham cohomology group $\mathcal{H}^n(X)$ is defined by $$\mathcal{H}^n(X)=\frac{A^n(X)}{\text{d}A^{n-1}(X)}.$$ Given any $\omega \in \mathcal{H}^n(X),$ there is a unique class $\text{Res}(\omega) \in H^{n-1}(X,\C)$ such that for $\gamma$ an $(n-1)$-cycle and $T(\gamma)$ a tube around $\gamma,$ we have $$\frac{1}{2\pi i}\int_{T(\gamma)}\omega=\int_{\gamma} \text{Res}(\omega).$$ This defines the \textit{residue map} $$\text{Res}: \mathcal{H}^n(X) \to H^{n-1}(X,\C).$$ For example, the holomorphic form $\omega \in H^{n-1,0}(X)$ satisfies $$\text{Res}\left(\frac{x_0\cdots x_n\Omega_1}{F_{\psi}}\right)=\omega.$$

Thanks to a result of Griffiths, the residue map can be used to compute the primitive cohomology of a hypersurface as follows
\begin{prop}\cite[(8.6)]{G69} 
The residue map induces an injective ring homomorphism whose image is the primitive cohomology $H^{n-k,k-1}_{\text{prim}}(X)$ \begin{align*}
\left(\frac{\C[x_0,\dots,x_n]}{J(F)}\right)_{k\deg(F)-(n+1)} \to H^{n-k,k-1}(X).    
\end{align*} 
\end{prop}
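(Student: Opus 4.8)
The plan is to reconstruct Griffiths' original argument, which rests on three pillars: an explicit reduction-of-pole-order identity, the Poincaré residue (Gysin) sequence, and a dimension count made available by smoothness of $X$. First I would set up the pole-order filtration on the algebraic de Rham group, writing $P^{k}\subset\mathcal{H}^{n}(X)$ for the classes represented by forms with a pole of order at most $k$ along $X$. By the corollary quoted above (\cite[Cor. 2.11]{G69}), every such class is, modulo $\mathrm{d}A^{n-1}(X)$, a combination of forms $Q\,\Omega/F^{k}$ with $\Omega=x_{0}\cdots x_{n}\,\Omega_{1}$ and $Q$ homogeneous of degree $k\deg F-(n+1)$, so there is a surjection $\C[x_0,\dots,x_n]_{k\deg F-(n+1)}\twoheadrightarrow P^{k}/P^{k-1}$. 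The reduction identity says that if $Q=\sum_i A_i\,\partial F/\partial x_i$ then
\[
\frac{Q\,\Omega}{F^{k}}\equiv\frac{1}{k-1}\,\frac{\bigl(\sum_i\partial A_i/\partial x_i\bigr)\,\Omega}{F^{k-1}}\pmod{\mathrm{d}A^{n-1}(X)},
\]
proved by exhibiting the explicit $(n-1)$-form primitive. Hence the Jacobian ideal $J(F)$ in degree $k\deg F-(n+1)$ maps into $P^{k-1}$, and we get a well-defined surjection $\Phi_{k}\colon\bigl(\C[x]/J(F)\bigr)_{k\deg F-(n+1)}\twoheadrightarrow P^{k}/P^{k-1}$.

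Next I would identify the target with a graded piece of the Hodge filtration. Since $U:=\PP^{n}\setminus X$ is affine, $\mathcal{H}^{n}(X)\cong H^{n}_{\mathrm{dR}}(U)$ by the algebraic de Rham theorem, and the Poincaré residue long exact sequence shows that $\mathrm{Res}\colon H^{n}(U)\to H^{n-1}(X,\C)$ surjects onto $\ker(\cup\,\eta)=H^{n-1}_{\mathrm{prim}}(X,\C)$. The crucial input — and the main obstacle — is that $\mathrm{Res}$ is \emph{strict} for the pole-order filtration and the Hodge filtration: $\mathrm{Res}(P^{k})=F^{\,n-1-k}H^{n-1}_{\mathrm{prim}}(X,\C)=\bigoplus_{j=1}^{k}H^{n-j,j-1}_{\mathrm{prim}}(X)$, the case $k=1$ being the familiar fact that $\mathrm{Res}(\Omega/F)$ is the holomorphic form. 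This is the one genuinely transcendental ingredient: that a form with a pole of order exactly $k$ has residue of Hodge level $\geq n-k$, and that every such Hodge class is attained. Griffiths proves it by estimating periods of residues against vanishing cycles; in modern language it is the statement that the Hodge filtration on Deligne's mixed Hodge structure on $H^{n}(U)$ is computed by pole order. Granting strictness, $\mathrm{Res}$ induces a surjection $P^{k}/P^{k-1}\twoheadrightarrow H^{n-k,k-1}_{\mathrm{prim}}(X)$, and composing with $\Phi_k$ gives a surjection $\bigl(\C[x]/J(F)\bigr)_{k\deg F-(n+1)}\twoheadrightarrow H^{n-k,k-1}_{\mathrm{prim}}(X)$.

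For bijectivity, I would use that smoothness of $X$ makes $\partial_{0}F,\dots,\partial_{n}F$ a regular sequence, so $R:=\C[x]/J(F)$ is an Artinian complete intersection — in particular Gorenstein — with Hilbert series $\bigl((1-t^{\,d-1})/(1-t)\bigr)^{n+1}$, where $d=\deg F$. For $k=1$ one reads off $\dim R_{d-n-1}=\binom{d-1}{n}=h^{n-1,0}(X)$, so $\Phi_{1}$ followed by $\mathrm{Res}$ is already an isomorphism; comparing $\dim R_{kd-(n+1)}$ for the remaining $k$ with the classical Hodge numbers of a smooth degree-$d$ hypersurface in $\PP^{n}$ — and using that Gorenstein duality on $R$ pairs the degrees $kd-(n+1)$ and $(n+1-k)d-(n+1)$ exactly as Poincaré duality pairs $H^{n-k,k-1}_{\mathrm{prim}}$ with $H^{k-1,n-k}_{\mathrm{prim}}$ — forces every one of the surjections above to be an isomorphism, hence $\Phi_k$ is injective. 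Finally, the assertion that the construction is a ring homomorphism (multiplication in the Jacobian ring matching cup product, up to explicit nonzero scalars) follows from the multiplicativity of the residue map and is spelled out in \cite{G69}.

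In summary: Step 1 is an explicit differentiation identity, Step 3 is bookkeeping with Hilbert series and the known Hodge numbers of smooth hypersurfaces, and the real work is the strictness of $\mathrm{Res}$ with respect to the pole-order and Hodge filtrations in Step 2, which is where I expect to lean most heavily on \cite{G69}.
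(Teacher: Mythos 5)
The paper offers no proof of this proposition---it is quoted directly from Griffiths \cite[(8.6)]{G69}---and your reconstruction is essentially Griffiths' own argument: pole-order reduction modulo the Jacobian ideal giving the surjection onto $P^k/P^{k-1}$, strictness of the residue map with respect to the pole-order and Hodge filtrations, and a Hilbert-series/Hodge-number dimension count to force bijectivity. The outline is sound (one indexing slip: $\mathrm{Res}(P^k)$ equals $F^{n-k}H^{n-1}_{\mathrm{prim}}(X,\C)$, not $F^{n-1-k}$, consistent with the explicit direct sum you write next to it), and the single place requiring genuinely transcendental input is exactly the strictness statement you flag, for which deferring to \cite{G69} is appropriate.
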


\begin{ex}

If $X$ is a quartic K3 surface in $\mathbb{P}^3,$ one has \begin{align}\label{eq:eqiso}
H^{2,0}_{\text{prim}}(X)=\left(\frac{\C[x_0,\dots,x_n]}{J(F)}\right)_{0}=\C, \quad 
H^{1,1}_{\text{prim}}(X)=\left(\frac{\C[x_0,\dots,x_n]}{J(F)}\right)_{4},\quad
H^{0,2}_{\text{prim}}(X)=\left(\frac{\C[x_0,\dots,x_n]}{J(F)}\right)_{8}=\C.
\end{align}
    
\end{ex}

Let $X_{\psi}$ be a pencil of hypersurfaces in $\mathbb{P}^n$ in the parameter $\psi,$ defined by homogeneous polynomials $F_{\psi} \in \C[x_0,\dots,x_n].$ Fix bases $\gamma_j \in H_{n-1}(X_{\psi},\C), j=1,\dots,\dim_{\C}H_{n-1}(X_{\psi},\C)$ and $$\Omega_{\psi,i} \in H^{n-1}(X_{\psi},\C), \quad i=1,\dots,\dim_{\C}H^{n-1}(X_{\psi},\C)$$ in such a way that it also provides a basis for the Hodge decomposition of $H^{n-1}(X_{\psi},\C).$ 

Define the \textit{period integrals} as $$\int_{\gamma_j}\Omega_{\psi,i},$$ for $i,j=1,\dots,\dim_{\C}H^{n-1}(X_{\psi},\C).$ 

For each pair $i,j,$ using the residue map defined above, we find $k \in \mathbb{N}$ and $Q_i \in \C[x_0,\dots,x_n]_{k\deg(F)-(n+1)}$ such that $$\int_{\gamma_j}\Omega_{\psi,i}=\int_{T(\gamma_j)}\frac{x_0\cdots x_n Q_i}{F_{\psi}^k}\Omega_1,$$ where $T(\gamma_j)$ is a tube around $\gamma_j.$ Our goal is to understand how these integrals vary along the family, so we differentiate them with respect to $\psi$ \begin{align*}
\frac{\text{d}}{\text{d}\psi}\int_{T(\gamma_j)}\frac{x_0 \cdots x_n Q_i}{F_{\psi}^k}\Omega_1=-k\int_{T(\gamma_j)}\frac{x_0\cdots x_n Q_i}{F_{\psi}^{k+1}}\frac{\text{d}F}{\text{d}\psi}\Omega_1,   
\end{align*} which originates a new $(n-1)$-form. Since $H^{n-1}(X_{\psi},\C)$ is finite dimensional, if we differentiate the period at most $\dim_{\C}H^{n-1}(X_{\psi},\C)$ times with respect to $\psi,$ we will obtain a linear relation between the $(n-1)$-forms that arise from the process. This linear relation is called the \emph{Picard-Fuchs equation} of the period $\int_{\gamma_j}\Omega_{\psi,i}.$

We will use the following notation. For a fixed cycle $\gamma$, consider the period integrals \begin{align}\label{eq:eqeq}
(v_0,\dots,v_n):=\int_{T(\gamma)}\frac{x^{\bm{v}}}{F_{\psi}^{k(\bm{v})}}\Omega_1,  
\end{align} where $x^v:=x_0^{v_0}\cdots x_n^{v_n}$ and $k(\bm{v}):=\frac{1}{n+1}\sum_{i=0}^nv_i.$ For example, the holomorphic period is given by $$(1,\dots,1)=\int_{T(\gamma)}\frac{x_0 \cdots x_n}{F_{\psi}^{}}\Omega_1.$$ 

\subsection{Hypergeometric series and periods of Delsarte pencils}

In many cases of geometric and arithmetic interest, the Picard-Fuchs equations associated to a pencil are \emph{hypergeometric}, a special type of ordinary differential equation with solutions given by \emph{hypergeometric series}. When this happens, the periods can be written in terms of such series, and one can study them explicitly. As we observe in \S ~\ref{subs: gahrs}, for Delsarte pencils of K3 surfaces and Calabi-Yau varieties, the Picard-Fuchs equations associated to the holomorphic form are always hypergeometric. We describe a method for identifying other hypergeometric Picard-Fuchs equations in \S ~\ref{subs: ASmethod}.

We begin this section by defining hypergeometric differential operators and their formal solutions, called hypergeometric series. 

Let $d$ be a natural number and $\bm{\alpha}=\{\alpha_1,\dots,\alpha_d\},\bm{\beta}=\{\beta_1,\dots,\beta_d\}$ two multisets of rational numbers. We define the \emph{hypergeometric series} \begin{equation}\label{eq:eqhyp}
    _dF_{d-1}(\bm{\alpha},\bm{\beta} \mid x)=\sum_{n=0}^{\infty}\dfrac{(\alpha_1)_n\cdots(\alpha_d)_n}{(\beta_1)_n\cdots(\beta_d)_n}x^n.
\end{equation}
Let $\Theta=x\tfrac{\text{d}}{\text{d}x}$. We also define the \emph{hypergeometric differential operator } $$D(\bm{\alpha},\bm{\beta} \mid x)=(\Theta+\beta_1-1)\cdots (\Theta+\beta_d-1)-x(\Theta+\alpha_1)\cdots (\Theta+\alpha_d).$$ Moreover, a differential equation will be called \textit{hypergeometric} if it is of the form $D(\bm{\alpha},\bm{\beta} \mid x) \cdot f=0,$ where $f: \C \to \C.$  

If $\beta_j$ are distinct elements in $(0,1]$ and $\beta_1=1,$ then a solution for the hypergeometric differential operator $D(\bm{\alpha},\bm{\beta} \mid x)$ at the origin is given by $_dF_{d-1}(\bm{\alpha},\bm{\beta} \mid x).$ The other solutions are $$x^{1-\beta_j}{_dF_{d-1}}(\alpha_1+1-\beta_j,\dots, \alpha_d+1-\beta_j ; \beta_1+1-\beta_j,\dots,\beta_d+1-\beta_j \mid x),$$ for $j=2,\dots,d.$ 

\subsubsection{The Picard-Fuchs equation of the holomorphic form}\label{subs: gahrs}
Thanks to the following result of G\"ahrs, we know that the Picard-Fuchs equation associated to the holomorphic period of any invertible pencil is hypergeometric, and we can determine it only by knowing the dual weights of the family.

\begin{theorem}[\cite{G11}]\label{thm: gahrs}
Let $X_{A,\psi}$ be an invertible pencil of hypersurfaces in $\mathbb{P}^n$ with dual weights $q_0,\dots,q_n.$ Then let $$\bm{\alpha}=\left\{\frac{j}{d^T}: j=0,\dots,d^T-1\right\} \text{ and } \bm{\beta}=\left\{\frac{j}{q_i}: i=0,\dots,n; j=0,\dots,q_i-1\right\}.$$
Then the Picard-Fuchs equation associated to the holomorphic forms of this family is hypergeometric as follows \begin{align*}
\prod_{\beta_{ij}\in \bm{\beta} \setminus I}(\Theta-\beta_{ij})-z \prod_{\alpha_j \in \bm{\alpha}\setminus I}(\Theta+\alpha_j)=0,   
\end{align*} where $z:=\prod_{i}q_i^{-q_i}\psi^{-d^T},$ $\Theta=-(d^T)\frac{\text{d}}{\text{d}\psi}$ and $I:=\bm{\alpha}\cap \bm{\beta}.$
\end{theorem}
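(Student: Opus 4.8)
The plan is to compute the holomorphic period $(1,\dots,1)=\int_{T(\gamma)}\frac{x_0\cdots x_n}{F_\psi}\,\Omega_1$ \emph{directly} as an explicit power series in $\psi^{-1}$, to recognize it as a generalized hypergeometric series, and then to identify the hypergeometric operator annihilating it with the Picard--Fuchs operator. This sidesteps a Griffiths--Dwork pole-order reduction, which is awkward when the defining matrix $A$ is not symmetric. I would work in the affine chart $x_0=1$ and take $\gamma$ to be a small real $n$-torus $\{|x_1|=\cdots=|x_n|=\varepsilon\}$, on which $\Omega_1$ restricts (up to sign) to $\frac{\mathrm{d}x_1\wedge\cdots\wedge\mathrm{d}x_n}{x_1\cdots x_n}$, so that integration over $T(\gamma)$ extracts the constant term of a Laurent expansion; then, using that the deformation term is $-d^{T}\psi\,x_0\cdots x_n$, expand around $\psi=\infty$:
\[
\frac{x_0\cdots x_n}{F_\psi}=\frac{-1}{d^{T}\psi}\sum_{m\ge 0}\left(\frac{F_A}{d^{T}\psi\,x_0\cdots x_n}\right)^{m},
\]
and multinomially expand $F_A^{m}=\sum_{|\bm{m}|=m}\tfrac{m!}{m_0!\cdots m_n!}\,x^{A^{T}\bm{m}}$, the sum over nonnegative integer tuples $\bm{m}=(m_0,\dots,m_n)$ with $|\bm{m}|=\sum_i m_i$.

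The combinatorial core is the following. A monomial $x^{A^{T}\bm{m}-m(1,\dots,1)}$ contributes to the constant term exactly when $A^{T}\bm{m}=m(1,\dots,1)$; since $\det A\ne 0$ this forces $\bm{m}=\tfrac{m}{d^{T}}(q_0,\dots,q_n)$, because the dual weights satisfy $A^{T}(q_0,\dots,q_n)^{T}=d^{T}(1,\dots,1)^{T}$. Hence only $m$ in an arithmetic progression $e\Z$ with $e\mid d^{T}$ contribute, and for such $m$ the multi-index $\bm{m}$, and therefore the entire term, is pinned down by the dual weights. Writing $m=d^{T}\ell$ (the case $e=d^{T}$, which covers the five pencils of interest), the $\ell$-th coefficient of the period equals, up to a fixed constant, $\tfrac{(d^{T}\ell)!}{(q_0\ell)!\cdots(q_n\ell)!}\,(d^{T})^{-d^{T}\ell}$, the last factor coming from the deformation constant. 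Applying the Gauss--Legendre multiplication formula $(N\ell)!=N^{N\ell}\,\ell!\prod_{j=1}^{N-1}(j/N)_\ell$ to numerator and denominator, the powers of $d^{T}$ cancel, the powers of the $q_i$ combine with $\psi^{-d^{T}}$ into the variable $z=\prod_i q_i^{-q_i}\psi^{-d^{T}}$, and the remaining Pochhammer symbols assemble into $\prod_{\alpha\in\bm{\alpha}}(\alpha)_\ell$ over $\prod_{\beta\in\bm{\beta}}(\beta)_\ell$ with $\bm{\alpha},\bm{\beta}$ as in the statement. After cancelling the common parameters $I=\bm{\alpha}\cap\bm{\beta}$ and reconciling the harmless interplay of the parameter $0$ with the parameter $1$ (using the reflection $k\mapsto q_i-k$ within each block of $\bm{\beta}$), the period is a nonzero scalar multiple of a hypergeometric series in $z$ annihilated by the operator in the statement.

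It then remains to see that this operator is the \emph{minimal} annihilator of the period, hence the Picard--Fuchs operator. Here I would check irreducibility: the residual $\bm{\alpha}$-parameters lie in $(0,1)$, the residual $\bm{\beta}$-parameters in $\{0\}\cup(0,1)$, and the classical irreducibility criterion for hypergeometric operators reduces to the statement that no residual pair satisfies $\alpha_i+\beta_j\in\Z$. This holds by the construction of $I$: if $j/d^{T}$ is not cancelled into $I$ then neither is its complement $(d^{T}-j)/d^{T}$ of the form $k/q_i$, so it never occurs among the $\bm{\beta}$. An irreducible operator is the minimal annihilator of any nonzero solution, and since the $\psi$-derivatives of the period already span a space of dimension at most the order $d^{T}-|I|$ of the operator, that dimension is exactly $d^{T}-|I|$ and the operator is the Picard--Fuchs operator.

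The step I expect to be the main obstacle is the bookkeeping of the second paragraph: determining precisely which $m$, and which $\bm{m}$, survive the monomial-orthogonality constraint in terms of the dual weights, and then verifying that the multinomial-to-Pochhammer translation reproduces the parameters $\bm{\alpha},\bm{\beta}$ and the variable $z$ of the statement exactly, including the precise shape of $I$ and the normalization around $0$ and $1$. The residue setup and the irreducibility/minimality argument are comparatively routine.
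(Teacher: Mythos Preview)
The paper does not prove this theorem; it is quoted from G\"ahrs \cite{G11} and used as a black box. So there is no proof in the paper to compare against. Your strategy---expand the holomorphic period as a residue integral around $\psi=\infty$, pick off the constant term via the constraint $A^{T}\bm m=m(1,\dots,1)$, use that the dual weights satisfy $A^{T}(q_0,\dots,q_n)^{T}=d^{T}(1,\dots,1)^{T}$, and convert the resulting multinomial coefficients to Pochhammer symbols by Gauss multiplication---is the standard direct computation and is essentially how G\"ahrs proceeds as well. The identification of $z=\prod_i q_i^{-q_i}\psi^{-d^T}$ and of the cancelled set $I$ comes out exactly as you describe.

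Two small corrections. First, your irreducibility criterion is misstated: the Beukers--Heckman condition is $\alpha_i-\beta_j\notin\Z$ for all $i,j$, not $\alpha_i+\beta_j\notin\Z$. Fortunately this makes the argument \emph{easier}, not harder: after removing $I=\bm\alpha\cap\bm\beta$, the remaining parameters lie in $[0,1)$ and are disjoint as multisets, so no difference can be an integer; your detour through complements $(d^{T}-j)/d^{T}$ is unnecessary. Second, you flag but do not resolve the possibility $e<d^{T}$. In fact one needs $d^{T}\mid m\,q_i$ for every $i$, so $e=\mathrm{lcm}_i\bigl(d^{T}/\gcd(d^{T},q_i)\bigr)$; whenever some $q_i$ is coprime to $d^{T}$ (which holds for all five pencils here, and more generally follows from the Kreuzer--Skarke atomic structure), one gets $e=d^{T}$ and the argument goes through verbatim. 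For a fully general invertible pencil one should either check this coprimality or note that when $e<d^{T}$ the series is still hypergeometric in $z$ after reindexing, with the same cancelled parameters.
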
 
Notice that $\bm{\alpha}$ does not have repetitions, so the intersection of hypergeometric parameters invoked in Theorem~\ref{thm: gahrs} is well defined.

Although Theorem~\ref{thm: gahrs} gives an explicit formula for the Picard-Fuchs equation associated to the holomorphic form, it does not say anything about the periods that do not satisfy that ODE. However, recently in \cite{AS23}, Adolphson and Sperber developed a new method to compute hypergeometric Picard-Fuchs equations associated with a monomial basis. This method gives an explicit description of the matrix of fundamental solutions associated with a monomial basis and consequently yields information about the remaining hypergeometric periods.

\subsubsection{The remaining periods and the Adolphson-Sperber method}\label{subs: ASmethod}
Let $\mathcal{A}=\{\bm{a}_j\}_{j=1}^m \subseteq \Z^n$ be a linearly independent set in $\R^n$ and suppose that there is a point $\bm{a}_0 \in \Z^n$ in the interior of the convex hull of $\mathcal{A}$. Equivalently, we may suppose $$\ell_0\bm{a}_0=\sum_{j=1}^m \ell_j\bm{a}_j,$$ where $\ell_0,\dots,\ell_m$ are coprime integers such that $\ell_0=\sum_{j=0}^n \ell_j.$ Let \begin{align}\label{eq: flambda}
f_{\lambda}=\sum_{j=1}^m \ell_jX^{\bm{a}_j}-\ell_0\lambda X^{\bm{a}_0} \in \Z[X_0^{\pm },\dots,X_n^{\pm}],   
\end{align} where $X^{\bm{a}_j}=X_0^{a_{0j}}\cdots X_n^{a_{nj}}$ and $\lambda$ is a parameter.

Let $V \subset \R^n$ be the subspace of dimension $m$ generated by $\mathcal{A}$ and $V_{\Z}=V \cap \Z^n.$ Let $C(\mathcal{A})$ be the real cone generated by $\mathcal{A}$ and $M=V_{\Z}\cap C(\mathcal{A}).$ Consider also $S \subset \C[\lambda][X_0^{\pm},\dots,X_n^{\pm}]$ be the $\C[\lambda]$-algebra generated by the monomials $\{X^u: u \in M\}.$

\begin{remark}  
As mentioned in \cite[Equation (1.3)]{AS23}, for invertible polynomials the $\ell_j,$ $j=1,\dots,m$ are the dual weights and $\ell_0=d^T.$ 
\end{remark}

Consider the half-open parallelepiped defined by $\mathcal{A}$, \begin{align*}
P(\mathcal{A})=\left\{\sum_{j=1}^m c_j\bm{a}_j \mid 0 \leq c_j < 1 \text{ for } j=1,\dots,m\right\}.    
\end{align*}

Let $\mathcal{B}:=V_{\Z}\cap P(\mathcal{A})$.

\begin{prop}
\label{hypergeometric}
Let $\bm{b} \in \mathcal{B}$ and write $\bm{b}=\sum_{j=1}^m v_j\bm{a}_j,$ with $v_j \in [0,1) \cap \Q$ for $j=1,\dots,m.$ The $(\bm{b},\bm{b})$ entry of the fundamental solution matrix is the hypergeometric series \begin{align}\label{eq: eqAS}
\sum_{s=0}^{\infty}\frac{\left(\frac{v_j}{\ell_j}\right)_s\left(\frac{v_j+1}{\ell_j}\right)_s\cdots \left(\frac{v_j+\ell_j-1}{\ell_j}\right)_s}{\left(\frac{1}{\ell_0}\right)_s\left(\frac{2}{\ell_0}\right)_s\cdots \left(\frac{\ell_0}{\ell_0}\right)_s}\lambda^{s \ell_0}.    
\end{align}    
\end{prop}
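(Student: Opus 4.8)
The plan is to obtain this statement as a direct application of the theory developed by Adolphson and Sperber in \cite{AS23}. The data we have set up — the linearly independent family $\mathcal{A}=\{\bm{a}_j\}_{j=1}^m$, the interior lattice point $\bm{a}_0$, the coprime integers $\ell_0,\dots,\ell_m$ with the balancing relation $\ell_0\bm{a}_0=\sum_{j=1}^m\ell_j\bm{a}_j$, and the resulting Laurent polynomial $f_\lambda$ of \eqref{eq: flambda} — is exactly the input for which \cite{AS23} construct a connection on an explicit $\C[\lambda]$-module together with its fundamental solution matrix. So the argument has two parts: (i) observe that our hypotheses coincide with theirs; and (ii) translate their description of the diagonal entries into the Pochhammer shape stated here, where the $v_j$ are read off as the (necessarily rational) barycentric coordinates of $\bm{b}$ with respect to $\mathcal{A}$, using $\bm{b}\in\mathcal{B}=V_\Z\cap P(\mathcal{A})$ to guarantee $v_j\in[0,1)$.

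The mechanism behind (ii) that I would make explicit is as follows. One works with the $\C[\lambda]$-algebra $S$ of monomials supported on $M=V_\Z\cap C(\mathcal{A})$ modulo the Jacobian ideal of $f_\lambda$ (generated by the logarithmic partials $X_i\,\partial f_\lambda/\partial X_i$); by the structure theory of \cite{AS23} this quotient is free over $\C[\lambda]$ with basis $\{X^{\bm{b}}\}_{\bm{b}\in\mathcal{B}}$, and carries the Dwork--Gauss--Manin connection in the parameter $\lambda$. The fundamental solution matrix $Y(\lambda)$ is the matrix of horizontal sections in this basis, normalized by $Y(0)=I$. Since $\partial_\lambda\bigl(X^{\bm{b}}f_\lambda^{-k}\bigr)=k\,\ell_0\,X^{\bm{b}+\bm{a}_0}f_\lambda^{-k-1}$, iterating the connection amounts to repeatedly multiplying by $X^{\bm{a}_0}$ and reducing the result back into the $\C[\lambda]$-span of $\{X^{\bm{b}'}:\bm{b}'\in\mathcal{B}\}$ by toric Griffiths--Dwork reduction. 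The balancing relation forces $\bm{b}+\ell_0\bm{a}_0\equiv\bm{b}$ modulo the lattice $\Z\mathcal{A}$, so the $\bm{a}_0$-shift orbit of $\bm{b}$ closes up after exactly $\ell_0$ steps; hence $Y_{\bm{b},\bm{b}}(\lambda)$ is a power series in $\lambda^{\ell_0}$, say $\sum_{s\ge 0}c_s\lambda^{s\ell_0}$ with $c_0=1$. Multiplying together the scalar factors accumulated along one full cycle $\bm{b}\to\bm{b}+\bm{a}_0\to\cdots\to\bm{b}+\ell_0\bm{a}_0$ — the numerators contributing, for each unit increment of each coordinate $v_j$, the factors in $\prod_{j}\prod_{k=0}^{\ell_j-1}\bigl(\tfrac{v_j+k}{\ell_j}+s\bigr)$, and the $\ell_0$ derivative/reduction steps the denominator $\prod_{k=1}^{\ell_0}\bigl(\tfrac{k}{\ell_0}+s\bigr)$ — yields the recursion $c_{s+1}/c_s=\prod_{j}\prod_{k=0}^{\ell_j-1}\bigl(\tfrac{v_j+k}{\ell_j}+s\bigr)\big/\prod_{k=1}^{\ell_0}\bigl(\tfrac{k}{\ell_0}+s\bigr)$; solving it from $c_0=1$ gives $c_s=\prod_{j}\prod_{k=0}^{\ell_j-1}\bigl(\tfrac{v_j+k}{\ell_j}\bigr)_s\big/\prod_{k=1}^{\ell_0}\bigl(\tfrac{k}{\ell_0}\bigr)_s$, which is exactly the series in \eqref{eq: eqAS}.

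The step I expect to be the main obstacle is establishing the decoupling used above: that the $\bm{b}$-component of a horizontal section is governed solely by this single cyclic recursion, i.e.\ that the off-diagonal blocks of the connection matrix feed into the $\bm{b}$-slot only through contributions already accounted for. This is the substantive structural content of \cite{AS23} — the fact that $M$ decomposes compatibly with $\mathcal{B}$ and the cone on $\mathcal{A}$, and that the connection respects the induced grading — so in a full write-up I would quote their structure theorem and the associated reduction algorithm for this point, and restrict my own work to the notation check in (i) and the telescoping computation in (ii). A secondary, purely bookkeeping difficulty is keeping the indices in the cycle aligned (matching the $\sum_j\ell_j=\ell_0$ numerator factors against the $\ell_0$ reduction steps), which is what makes the denominator run over $k=1,\dots,\ell_0$ rather than over a shifted range.
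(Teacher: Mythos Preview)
Your proposal is correct and aligns with the paper's treatment: the paper does not give its own proof of this proposition but simply states it as a result extracted from \cite{AS23}, so your plan to invoke their structure theorem for the decoupling step and to verify the Pochhammer form via the cycle recursion is exactly in the intended spirit. If anything, your sketch of the Griffiths--Dwork reduction mechanism and the coefficient recursion is more explicit than what the paper records.
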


\begin{remark}\label{rem: cov}
The solutions given in Proposition \ref{hypergeometric} are solutions at the origin. To obtain the hypergeometric parameters that match our finite field computations, we perform a change of variables (\cite[\S 10]{AS23}) that transforms solutions at the origin into solutions at infinity. We explain it below for general multisets $\bm{\alpha},\bm{\beta}$ such that $|\bm{\alpha}|=|\bm{\beta}|=d$:

\begin{enumerate}
    \item Choose $\alpha_1$ to be the smallest element of $\bm{\alpha}.$ 
    \item Define $\tilde{\bm{\alpha}}=\{1-\beta_i+\alpha_1: i=1,\dots,d\}$ and $\tilde{\bm{\beta}}=\{\alpha_1-\alpha_i+1 : i=1,\dots,d\}.$
   
    \item If $H(x)$ satisfies $D(\tilde{\bm{\alpha}},\tilde{\bm{\beta}} \mid x),$ then $t^{-d^T\alpha_1}H(t^{-d^T})$ satisfies $D(d^T\bm{\alpha},d^T\bm{\beta} \mid t^{d^T}).$
\end{enumerate}    
\end{remark}

As an application of their work, in \cite[\S 11]{AS23}, Adolphson and Sperber look at the de Rham cohomology of projective hypersurfaces and prove Theorem \ref{T:ASPF} for one-parameter monomial deformations of Fermat hypersurfaces. The key ingredient in the proof is a result of Katz that also holds (by the related work of Griffiths in \cite{G69}) for any generalized Delsarte pencil of projective hypersurfaces whose general member is smooth: the central fiber need not be of Fermat type. Therefore, Theorem \ref{T:ASPF} also holds in general as follows:

\begin{theorem}[\protect{\cite[\S 11]{AS23}}]\label{T:ASPF}
Suppose we have a generalized Delsarte pencil of projective hypersurfaces $X_\lambda \subset \mathbb{P}^n_{\C}$ defined by homogeneous degree $d$ polynomials $f_{\lambda} \in \Z[\lambda][X_0,\dots,X_n],$ as in \eqref{eq: flambda}, whose general member is smooth. Let $\mathcal{C}$ be the subspace of $V_{\Z}$ given by elements $\bm{c}$ whose entries are positive and sum to an integer multiple of $d$. The set of monomials corresponding to the lattice points $\mathcal{B} \cap \mathcal{C}$ forms a basis for a subspace $H^{n-1}_\text{hyp}(X_\lambda)$ of middle cohomology. All of these basis elements satisfy hypergeometric Picard-Fuchs equations.
\end{theorem}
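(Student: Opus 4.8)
The plan is to reduce Theorem~\ref{T:ASPF} to the Fermat-pencil case established in \cite[\S 11]{AS23} by isolating the two geometric inputs that argument actually uses, and checking that each survives when the central fiber $f_0=\sum_{j=1}^m \ell_j X^{\bm a_j}$ is an arbitrary Delsarte polynomial rather than $\sum_i X_i^d$.

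First I would recall the Griffiths residue description of middle cohomology. By \cite[(8.6)]{G69} (the Proposition quoted above), for any smooth degree-$d$ hypersurface $X\subset\PP^n$ the residue map identifies $H^{n-1}_{\mathrm{prim}}(X,\C)$ with $\bigoplus_{k\ge 1}\big(\C[x_0,\dots,x_n]/J(F)\big)_{kd-(n+1)}$, the monomial $x^{\bm v}$ representing the class $\mathrm{Res}\big(x^{\bm v}F_\lambda^{-k(\bm v)}\Omega_1\big)$. For the pencil $f_\lambda$ of \eqref{eq: flambda} I would check that the exponent vectors in $\mathcal B\cap\mathcal C$ --- positive entries summing to a multiple of $d$, lying in the half-open parallelepiped $P(\mathcal A)$ --- give a set of representatives for a linearly independent family in the Jacobian ring. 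This is a purely combinatorial statement about the Newton polytope of $f_\lambda$ and its interior lattice point $\bm a_0$, insensitive to whether the vertex monomials are pure powers; smoothness of the general member enters precisely here, guaranteeing that $J(F)$ has the expected Hilbert function so that these monomials remain part of a monomial basis. This matching is the bridge between the toric $A$-hypergeometric basis of \cite{AS23} and the classical Griffiths basis.

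Second, and this is the crux, I would transport the hypergeometric Picard--Fuchs equations through that bridge. Differentiating a Griffiths residue with respect to the parameter, $\tfrac{\mathrm d}{\mathrm d\lambda}\int_{T(\gamma)}x^{\bm v}F_\lambda^{-k}\Omega_1=-k\int_{T(\gamma)}x^{\bm v}(\partial_\lambda F_\lambda)F_\lambda^{-k-1}\Omega_1$ and reducing modulo the Jacobian ideal produces exactly the recursion encoded in the series \eqref{eq: eqAS}; Proposition~\ref{hypergeometric} packages this as the statement that the $(\bm b,\bm b)$ entry of the fundamental solution matrix of the toric connection is that hypergeometric series. The only place \cite[\S 11]{AS23} uses the Fermat shape of the central fiber is through the result of Katz identifying the projective Picard--Fuchs operator with the toric one; but, as noted in the preamble to the statement, Katz's argument rests on the Griffiths differentiation formulas of \cite{G69}, which hold verbatim for any Delsarte pencil whose general member is smooth. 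Hence the identification goes through: each basis element of $H^{n-1}_{\mathrm{hyp}}(X_\lambda)$ satisfies $D(\bm\alpha,\bm\beta\mid x)\cdot f=0$ with parameters read off from \eqref{eq: eqAS}, and after the change of variables of Remark~\ref{rem: cov} one obtains the solutions at infinity whose parameters match the finite-field computations.

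The main obstacle I anticipate is the bookkeeping in the second step: making the dictionary between the toric Dwork cohomology of \cite{AS23} and the projective de Rham cohomology of $X_\lambda$ precise enough that the hypergeometric equation satisfied by the toric period genuinely transfers to the period $\int_{\gamma}\mathrm{Res}\big(x^{\bm b}F_\lambda^{-k}\Omega_1\big)$ --- equivalently, verifying carefully that Katz's comparison is insensitive to the shape of the central fiber and depends only on the Newton-polytope data $(\mathcal A,\bm a_0,\ell_0,\dots,\ell_m)$. Once that identification is in hand the hypergeometric conclusion is immediate from Proposition~\ref{hypergeometric}, and the remaining content is the (routine) combinatorics of the parallelepiped $P(\mathcal A)$.
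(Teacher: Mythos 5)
Your proposal takes essentially the same route as the paper: the paper's entire justification (given in the paragraph preceding the theorem) is that \cite[\S 11]{AS23} proves the statement for monomial deformations of Fermat hypersurfaces, that the only Fermat-specific ingredient is Katz's comparison result, and that this ingredient holds for any generalized Delsarte pencil with smooth general member by the work of Griffiths in \cite{G69}. You identify exactly this reduction (and flesh out the residue-map bridge and the differentiation argument in more detail than the paper does), so the approach is correct and matches.
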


\begin{remark}
The normalization of the defining polynomial adopted in \cite{AS23} (see \eqref{eq: flambda}) differs from ours (see \eqref{eq: pencil}) by a linear change of variables on $X_0,\dots,X_n$ and the deformation parameter $\lambda$ that does not affect the values of the parameters $\bm{\alpha}$ and $\bm{\beta}.$ We present the change of variables for $\Csf_4$ and $\Csf_2\Fsf_2$ here.

\label{ex: covc4} For the family $\Csf_4,$ our defining polynomial is $x_0^3x_1+x_1^3x_2+x_2^3x_3+x_3^4-27\psi x_0x_1x_2x_3.$ In Adolphson and Sperber's normalization, it is $9X_0^3X_1+6X_1^3X_2+7X_2^3X_3+5X_3^4-27\lambda X_0X_1X_2X_3.$ Thus the linear change of variables is given by the maps \begin{align*}
X_0 \mapsto 2^{-\frac{1}{9}}3^{\frac{5}{9}}5^{-\frac{1}{108}}7^{\frac{1}{27}}x_0 \qquad 
X_1 \mapsto 2^{\frac{1}{3}}3^{\frac{1}{3}}5^{\frac{1}{36}}7^{-\frac{1}{9}}x_1 \qquad 
X_2 \mapsto 5^{-\frac{1}{12}}7^{\frac{1}{3}}x_2 \qquad 
X_3 \mapsto 5^{\frac{1}{4}}x_3 \qquad 
\lambda \mapsto 2^{\frac{2}{9}}3^{\frac{8}{9}}5^{\frac{5}{27}}7^{\frac{7}{27}}\psi.
\end{align*} 

\label{ex: covc2f2}
For the family $\Csf_2\Fsf_2,$ our defining polynomial is $x_0^3x_1+x_1^4+x_2^4+x_3^4-12\psi x_0x_1x_2x_3$, and Adolphson and Sperber's is $4X_0^3X_1+2X_1^4+3X_2^4+3X_3^4-12\lambda X_0X_1X_2X_3.$ Therefore, we make the following change of variables \begin{align*}
X_0 \mapsto 2^{-\frac{7}{12}}x_0 \qquad X_1 \mapsto 2^{-\frac{1}{4}}x_1 \qquad X_2 \mapsto 3^{-\frac{1}{4}}x_2 \qquad X_3 \mapsto 3^{-\frac{1}{4}}x_3 \qquad \lambda \mapsto 2^{\frac{5}{6}}3^{\frac{1}{2}}\psi.
\end{align*} 

Although the change of variables does not change $\bm{\alpha}$ and $\bm{\beta},$ it does change the parameter $x$ in the hypergeometric series. The scalar multiplying $\psi$ is important for this, as we will see in the next section.    
\end{remark}

\subsection{Picard-Fuchs equations for Delsarte K3 surfaces}
\label{subs: pfK3}
Now we apply the results discussed earlier in the section to our five pencils of Delsarte K3 surfaces. In all of these cases, summing the degree of the holomorphic Picard-Fuchs equation with the dimension of $W_{\psi}$ as in Table \ref{table: kloost}, we obtain exactly $\#(\mathcal{B}\cap \mathcal{C})$. As we will prove in Theorem~\ref{T:hypergeometricMatch}, the hypergeometric parameters obtained here coincide with the parameters from the finite field sums in \S~\ref{S:counting}. 

We begin with a detailed discussion of the families $\Csf_4$ and $\Csf_2\Fsf_2$.

\subsubsection{The family $\Csf_4$}\label{ex:ASC4}
This family is determined by the matrix 

\[\left(\begin{array}{rrrr}
3 & 1 & 0 & 0 \\
0 & 3 & 1 & 0 \\
0 & 0 & 3 & 1 \\
0 & 0 & 0 & 4
\end{array}\right).\]

Let $\mathcal{A}$ be generated by the rows of this matrix. Using SageMath \cite{sage}, we compute that $\mathcal{B} \cap \mathcal{C}$ consists of the following list of 18 lattice points:\begin{align*}
&\{(1, 1, 1, 1), (1, 1, 2, 4), (1, 1, 3, 3), (1, 2, 1, 4), (1, 2, 2, 3), (1, 2, 3, 2), (1, 3, 1, 3), (1, 3, 2, 2), (1, 3, 3, 1),\\&(2, 1, 1, 4), (2, 1, 2, 3), (2, 1, 3, 2), (2, 2, 1, 3), (2, 2, 2, 2), (2, 2, 3, 1), (2, 3, 1, 2), (2, 3, 2, 1), (2, 3, 3, 4)\}.    
\end{align*}

Plugging the point (1, 1, 1, 1) into equation \eqref{eq: eqAS}, we obtain the following sets of parameters \begin{align*}
\bm{\alpha}&=\left[\tfrac{1}{27}, \tfrac{1}{27}, \tfrac{1}{27}, \tfrac{34}{189}, \tfrac{11}{54}, \tfrac{32}{135}, \tfrac{61}{189}, \tfrac{10}{27}, \tfrac{59}{135}, \tfrac{88}{189}, \tfrac{29}{54}, \tfrac{115}{189}, \tfrac{86}{135}, \tfrac{19}{27}, \tfrac{142}{189}, \tfrac{113}{135}, \tfrac{47}{54}, \tfrac{169}{189}\right] \\
\bm{\beta}&=\left[\tfrac{2}{27}, \tfrac{1}{9}, \tfrac{5}{27}, \tfrac{2}{9}, \tfrac{8}{27}, \tfrac{1}{3}, \tfrac{11}{27}, \tfrac{4}{9}, \tfrac{14}{27}, \tfrac{5}{9}, \tfrac{17}{27}, \tfrac{2}{3}, \tfrac{20}{27}, \tfrac{7}{9}, \tfrac{23}{27}, \tfrac{8}{9}, \tfrac{26}{27}, 1\right].    
\end{align*}

Choosing $\alpha_1=\tfrac{1}{27}$ and performing the change of variables described in Remark \ref{rem: cov}, we get the new pair \begin{align*}
&\bm{\alpha}_{\heartsuit_0}=\left[\tfrac{26}{27}, \tfrac{25}{27}, \tfrac{23}{27}, \tfrac{22}{27}, \tfrac{20}{27}, \tfrac{19}{27}, \tfrac{17}{27}, \tfrac{16}{27}, \tfrac{14}{27}, \tfrac{13}{27}, \tfrac{11}{27}, \tfrac{10}{27}, \tfrac{8}{27}, \tfrac{7}{27}, \tfrac{5}{27}, \tfrac{4}{27}, \tfrac{2}{27}, \tfrac{1}{27}\right]\\&\bm{\beta}_{\heartsuit_0}=\left[1, 1, 1, \tfrac{6}{7}, \tfrac{5}{6}, \tfrac{4}{5}, \tfrac{5}{7}, \tfrac{2}{3}, \tfrac{3}{5}, \tfrac{4}{7}, \tfrac{1}{2}, \tfrac{3}{7}, \tfrac{2}{5}, \tfrac{1}{3}, \tfrac{2}{7}, \tfrac{1}{5}, \tfrac{1}{6}, \tfrac{1}{7}\right], 
\end{align*} which is associated to the following hypergeometric series
$$\lambda^{-1}{_{18}F_{17}(\bm{\alpha}_{\heartsuit_0},\bm{\beta}_{\heartsuit_0} \mid \lambda^{-27})}.$$ 

Moreover, we recall that $\lambda$ should be replaced by $2^{\frac{2}{9}}3^{\frac{8}{9}}5^{\frac{5}{27}}7^{\frac{7}{27}}\psi,$ so the associated hypergeometric series is $$2^{-\frac{2}{9}}3^{-\frac{8}{9}}5^{-\frac{5}{27}}7^{-\frac{7}{27}}\psi^{-1}{_{18}F_{17}(\bm{\alpha}_{\heartsuit_0},\bm{\beta}_{\heartsuit_0} \mid t_{\heartsuit_0})}.$$

On the other hand, using Theorem \ref{thm: gahrs}, we conclude that the Picard-Fuchs equation satisfied by the holomorphic periods is given by the differential operator $D(\bm{\alpha}_{\heartsuit_0},\bm{\beta}_{\heartsuit_0} \mid t_{\heartsuit_0}).$ Moreover, we claim that all the other periods that satisfy hypergeometric differential equations are also solutions for the same operator. Indeed, if $\omega$ denotes the holomorphic form, then $D(\bm{\alpha}_{\heartsuit_0},\bm{\beta}_{\heartsuit_0} \mid t_{\heartsuit_0})\cdot \omega=0$ by G\"ahrs. Moreover, $$0=\Theta^i D(\bm{\alpha}_{\heartsuit_0},\bm{\beta}_{\heartsuit_0} \mid t_{\heartsuit_0})\cdot \omega=D(\bm{\alpha}_{\heartsuit_0},\bm{\beta}_{\heartsuit_0} \mid t_{\heartsuit_0})\cdot \Theta^i\omega,$$ for every $i=1,\dots,18.$ By Proposition \cite[Proposition 4.1.11]{ZF18} the differential equation $D(\bm{\alpha}_{\heartsuit_0},\bm{\beta}_{\heartsuit_0} \mid t_{\heartsuit_0})\cdot f=0$ is irreducible. Therefore, it follows that it is the Picard-Fuchs equation associated to $\omega$ and its seventeen derivatives. 

The family $\Csf_4$ demonstrates that Theorem~\ref{T:ASPF} may produce a basis of primitive cohomology elements satisfying hypergeometric Picard-Fuchs equations that is strictly smaller than the dimension of $H^{n-1}(X)$. The remaining elements in a full basis for primitive cohomology need not have hypergeometric Picard-Fuchs equations. 

Still, for $\Csf_4,$ $H^{2}_\text{hyp}(X_\psi)$ is spanned by the holomorphic form and its derivatives. In general, the group of symmetries $G$ of the invertible pencil decomposes $H^{n-1}_\text{hyp}(X_\psi)$ into distinct orthogonal subspaces based on the different characters of the group action on monomials.

\subsubsection{The family $\Csf_2 \Fsf_2$}\label{ex:ASC2F2}
Consider the family $\Csf_2 \Fsf_2$ given by $x_0^3x_1+x_1^4+x_2^4+x_3^4 - 12 \psi x_0 x_1 x_2 x_3$. The group of symmetries of this family is $\Z/4\Z$. We may view the symmetry group as generated by $x_2 \mapsto i x_2$, $x_3 \mapsto -i x_3$. Under this action, $\mathcal{B} \cap \mathcal{C}$ decomposes into four subsets:

\begin{enumerate}[(a)]
\item \{(1, 1, 1, 1), (1, 3, 2, 2), (1, 1, 3, 3), (2, 4, 1, 1), (2, 2, 2, 2), (2, 4, 3, 3)\}
\item \{(1, 4, 2, 1), (1, 2, 3, 2), (2, 3, 2, 1), (2, 1, 3, 2)\}
\item \{(1, 3, 1, 3), (1, 3, 3, 1), (2, 2, 1, 3), (2, 2, 3, 1)\}
\item \{(1, 4, 1, 2), (1, 2, 2, 3), (2, 3, 1, 2), (2, 1, 2, 3)\}.
\end{enumerate}

Again, we plug in a representative from each class of monomials, and obtain Picard-Fuchs equations using the methods described in \cite{AS23}. The result are hypergeometric differential equations with parameters as follows:

\begin{enumerate}[(a)]
\item $\bm{\alpha}=\left[\frac{11}{12}, \frac{5}{6}, \frac{7}{12}, \frac{5}{12}, \frac{1}{6}, \frac{1}{12}\right], \bm{\beta}= \left[1, 1, 1, \frac{2}{3}, \frac{1}{2}, \frac{1}{3}\right]$
\item $\bm{\alpha}=\left[\frac{11}{12}, \frac{5}{6}, \frac{7}{12}, \frac{1}{6}\right], \bm{\beta}= \left[1, \frac{7}{8}, \frac{3}{4}, \frac{3}{8}\right]$
\item $\bm{\alpha}=\left[\frac{11}{12}, \frac{7}{12}, \frac{5}{12}, \frac{1}{12}\right], \bm{\beta}= \left[1, \frac{3}{4}, \frac{1}{2}, \frac{1}{4}\right]$
\item $\bm{\alpha}=\left[\frac{5}{6}, \frac{5}{12}, \frac{1}{6}, \frac{1}{12}\right], \bm{\beta}=\left[1, \frac{5}{8}, \frac{1}{4}, \frac{1}{8}\right]$.
\end{enumerate}

For instance, plugging the monomial $(1,4,1,2)$ into \eqref{eq: eqAS}, we first obtain the parameters \begin{align*}
\bm{\alpha}=\left[\tfrac{1}{12}, \tfrac{11}{24}, \tfrac{5}{6}, \tfrac{23}{24}\right], \bm{\beta}=\left[\tfrac{1}{4}, \tfrac{2}{3}, \tfrac{11}{12}, 1\right]. 
\end{align*}
We choose $\alpha_1=\tfrac{1}{12}$ and apply the change of variables defined in \ref{rem: cov} to obtain the new pair 
\begin{align*}
\bm{\alpha}_{\clubsuit_2}=\left[\tfrac{5}{6}, \tfrac{5}{12}, \tfrac{1}{6}, \tfrac{1}{12}\right], \bm{\beta}_{\clubsuit_2}=\left[1, \tfrac{5}{8}, \tfrac{1}{4}, \tfrac{1}{8}\right]    
\end{align*} associated to the hypergeometric series $\lambda^{-1}{_4F_3}(\bm{\alpha}_{\clubsuit_2},\bm{\beta}_{\clubsuit_2} \mid \lambda^{-12}).$ Recall that $\lambda$ should be replaced by $2^{\frac{5}{6}}3^{\frac{1}{2}}\psi$ and we obtain $2^{-\frac{5}{6}}3^{-\frac{1}{2}}\psi^{-1}{_4F_3}(\bm{\alpha}_{\clubsuit_2},\bm{\beta}_{\clubsuit_2} \mid t_{\clubsuit_2}).$ 

For the monomial $(2,3,2,1),$ we initially obtain \begin{align*}
\bm{\alpha}=\left[\tfrac{1}{6}, \tfrac{7}{24}, \tfrac{5}{12}, \tfrac{19}{24}\right], \bm{\beta}=\left[\tfrac{1}{4}, \tfrac{1}{3}, \tfrac{7}{12}, 1\right].
\end{align*} We apply the change of variables defined in \ref{rem: cov} and get \begin{align*}
\bm{\alpha}_{\clubsuit_3}=\left[\tfrac{11}{12}, \tfrac{5}{6}, \tfrac{7}{12}, \tfrac{1}{6}\right], \bm{\beta}_{\clubsuit_3}=\left[1, \tfrac{7}{8}, \tfrac{3}{4}, \tfrac{3}{8}\right],    
\end{align*} associated to the hypergeometric function $2^{-\frac{5}{3}}3^{-1}\psi^{-2}{_4F_3(\bm{\alpha}_{\clubsuit_3},\bm{\beta}_{\clubsuit_3} \mid t_{\clubsuit_3})}.$ 

We do a similar computation for the other two clusters to obtain the corresponding hypergeometric parameters. Notice that for both families, $\Csf_4$ and $\Csf_2\Fsf_2,$ adding the order of the Picard-Fuchs equation associated to the holomorphic form and $\dim W_{\psi},$ as in Table \ref{table: kloost}, we obtain $\#(\mathcal{B}\cap \mathcal{C}).$ 

\begin{prop}
Let $\bullet \in \{\Csf_4,\Csf_2\Fsf_2,\Csf_3\Fsf_1,\Csf_2\Lsf_2,\Csf_2\Csf_2\}$ denote one of our five families, with group of symmetries $H=H_{\bullet}.$ Then $H$ acts on the $\C$-vector space $H^2_{\text{hyp}}(X_{\psi}),$ giving a representation $H \to \text{GL}(H^2_{\text{hyp}}(X_{\psi})).$ Since $H$ is abelian, we have a decomposition $$H^2_{\text{hyp}}(X_{\psi})=\bigoplus_{\chi}V_{\chi},$$ where $\chi:H \to \C^{\times}$ is a character and $V_{\chi}$ is the eigenspace associated to $\chi.$ The holomorphic piece corresponds to the trivial character, and the remaining pieces form a subspace of dimension $\dim W_{\psi}.$
\end{prop}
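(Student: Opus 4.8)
The plan is to work with the monomial basis of $H^2_{\text{hyp}}(X_\psi)$ supplied by Theorem~\ref{T:ASPF}, show that it is simultaneously an eigenbasis for the $H$-action, and then read off both the decomposition and the dimensions from the enumeration of $\mathcal{B}\cap\mathcal{C}$ already carried out in \S\ref{subs: pfK3}. First I would set up the action. An element of $\text{SL}(F_A)$ is a diagonal automorphism $g=(\lambda_0,\dots,\lambda_n)$ with $\prod_i\lambda_i=1$; since $F_A$ is invertible, $F_\psi=F_A-d^T\psi\,x_0\cdots x_n$ is $g$-invariant, and because $\prod_i\lambda_i=1$ the form $\Omega_1$ is $g$-invariant as well. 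Hence $g$ sends $x^{\bm v}\Omega_1/F_\psi^{k(\bm v)}$ to $\bigl(\prod_i\lambda_i^{v_i}\bigr)x^{\bm v}\Omega_1/F_\psi^{k(\bm v)}$, so each residue $\text{Res}(x^{\bm v}\Omega_1/F_\psi^{k(\bm v)})$ with $\bm v\in\mathcal B\cap\mathcal C$ is an eigenvector with eigenvalue $\chi_{\bm v}(g):=\prod_i\lambda_i^{v_i}$; in particular $H^2_{\text{hyp}}(X_\psi)$ is $\text{SL}(F_A)$-stable and this basis diagonalizes the action. The character $\chi_{\bm v}$ descends to $H=\text{SL}(F_A)/J(F_A)$: on the generator $\rho=(\zeta,\dots,\zeta)$ of $J(F_A)$, with $\zeta=e^{2\pi i/(n+1)}$, one has $\chi_{\bm v}(\rho)=\zeta^{\sum_i v_i}=1$ because $\bm v\in\mathcal C$ forces $\sum_i v_i\equiv 0\pmod{n+1}$. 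This yields the decomposition $H^2_{\text{hyp}}(X_\psi)=\bigoplus_\chi V_\chi$ with $V_\chi$ spanned by the monomial classes of character $\chi$; and since $H$ acts by the same diagonal automorphism on every fibre of the pencil, its action is flat and therefore commutes with $\Theta$, so each $V_\chi$ is $\Theta$-stable.

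Next I would locate the holomorphic form. It is $\text{Res}(x_0\cdots x_n\,\Omega_1/F_\psi)$, i.e.\ $\bm v=(1,\dots,1)$, for which $\chi_{\bm v}(g)=\prod_i\lambda_i=1$ on $\text{SL}(F_A)$: the trivial character. Because $\Theta$ preserves characters, every $\Theta^i\omega$ again lies in $V_{\text{triv}}$, so the ``holomorphic piece'' — the $\Theta$-cyclic submodule generated by $\omega$, which by Theorem~\ref{thm: gahrs} together with the irreducibility of its hypergeometric operator (\cite[Proposition 4.1.11]{ZF18}) has dimension equal to the Picard--Fuchs order of Table~\ref{table: kloost} — is contained in $V_{\text{triv}}$.

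The crux is the reverse inclusion, equivalently the statement that each $V_\chi$ is a single irreducible hypergeometric $\Theta$-module. Here I would argue as follows: by Theorem~\ref{T:ASPF} and the recipe of \S\ref{subs: ASmethod}, any monomial class $\bm v\in\mathcal B\cap\mathcal C$ produces via \eqref{eq: eqAS} and the change of variables of Remark~\ref{rem: cov} an explicit hypergeometric datum $(\bm\alpha_{\bm v},\bm\beta_{\bm v})$, and the operator $D(\bm\alpha_{\bm v},\bm\beta_{\bm v}\mid\cdot)$ annihilates the class $\text{Res}(x^{\bm v}\Omega_1/F_\psi^{k(\bm v)})$, hence its entire $\Theta$-orbit, which lies in $V_{\chi_{\bm v}}$. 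One then checks — case by case, exactly as in \S\ref{ex:ASC4} and \S\ref{ex:ASC2F2}, and by the analogous enumerations for $\Csf_3\Fsf_1$, $\Csf_2\Lsf_2$, and $\Csf_2\Csf_2$ — that for each character $\chi$ occurring, the number of monomial classes in $V_\chi$ equals the order of the reduced operator attached to any of them and that this operator is irreducible; hence $V_\chi$ is exactly that operator's solution space, so $\Theta$-irreducible, and for the trivial character this forces $V_{\text{triv}}$ to equal the $\Theta$-cyclic module generated by $\omega$, of dimension the Picard--Fuchs order. The remaining bookkeeping is then immediate: $\dim H^2_{\text{hyp}}(X_\psi)=\#(\mathcal B\cap\mathcal C)$ by Theorem~\ref{T:ASPF}, and $\#(\mathcal B\cap\mathcal C)$ equals the Picard--Fuchs order plus $\dim W_\psi$ in each of the five cases by the identity recorded at the start of \S\ref{subs: pfK3} and visible in Table~\ref{table: kloost}; subtracting $\dim V_{\text{triv}}$ gives $\dim\bigl(\bigoplus_{\chi\neq 1}V_\chi\bigr)=\dim W_\psi$.

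The main obstacle is exactly the third paragraph: showing that the trivial isotypic component is no larger than the cyclic module generated by the holomorphic form, equivalently that the character decomposition refines to a decomposition into irreducible hypergeometric pieces with one piece per character. Morally this is forced by the commuting $H$- and $\Theta$-actions together with non-resonance of the hypergeometric data, but making it rigorous appears to require either the explicit monomial-by-character enumeration for each of the five families (matching the cluster size with the order of the corresponding reduced hypergeometric operator, as is done for $\Csf_4$ and $\Csf_2\Fsf_2$) or an irreducibility input in the spirit of \cite[Proposition 4.1.11]{ZF18} and the Beukers--Heckman criterion for every hypergeometric datum produced by the Adolphson--Sperber method.
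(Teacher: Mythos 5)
Your proposal is correct and follows essentially the same route as the paper, which states this proposition without a separate proof and relies on exactly the ingredients you assemble: the monomial eigenbasis of $H^2_{\text{hyp}}(X_{\psi})$ from Theorem~\ref{T:ASPF}, the character-by-character clustering of $\mathcal{B}\cap\mathcal{C}$ carried out in \S\ref{subs: pfK3}, and the case-by-case identity $\#(\mathcal{B}\cap\mathcal{C})=(\text{PF order})+\dim W_{\psi}$ recorded in Table~\ref{table: kloost}. The ``crux'' you isolate is in fact settled by the dimension count alone --- the cyclic $\Theta$-module generated by $\omega$ is contained in $V_{\text{triv}}$ and both have dimension equal to the size of the cluster containing $(1,1,1,1)$ --- so the per-character irreducibility you contemplate is not needed.
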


\subsubsection{The remaining pencils}
Now we briefly compute the solutions for the Picard-Fuchs equations of the remaining families.
\subsubsection{The family $\Csf_3\Fsf_1$}
This family is given by the matrix 
\begin{align*}
\begin{pmatrix*}
3 & 1 & 0 & 0 \\
0 & 3 & 1 & 0 \\
0 & 0 & 4 & 0 \\
0 & 0 & 0 & 4
\end{pmatrix*}.    
\end{align*}   

The symmetry group of this family is trivial and therefore the decomposition of $\mathcal{B}\cap \mathcal{C}$ is also trivial as follows \begin{align*}
&\{(1, 1, 1, 1), (1, 1, 4, 2), (1, 1, 3, 3), (1, 2, 4, 1), (1, 2, 3, 2), (1, 2, 2, 3),(1, 3, 3, 1), (1, 3, 2, 2), (1, 3, 1, 3),\\& (2, 1, 4, 1), (2, 1, 3, 2), (2, 1, 2, 3), (2, 2, 3, 1), (2, 2, 2, 2), (2, 2, 1, 3), (2, 3, 2, 1), (2, 3, 1, 2), (2, 3, 4, 3)\}.
\end{align*}
Plugging (1,1,1,1) into \eqref{rem: cov}, we get the hypergeometric parameters, which coincide with those obtained from the finite field sums
\begin{align*}
\bm{\alpha}_{\spadesuit_0}&=\left[\tfrac{35}{36}, \tfrac{17}{18}, \tfrac{31}{36}, \tfrac{29}{36}, \tfrac{13}{18}, \tfrac{25}{36}, \tfrac{23}{36}, \tfrac{11}{18}, \tfrac{19}{36}, \tfrac{17}{36}, \tfrac{7}{18}, \tfrac{13}{36}, \tfrac{11}{36}, \tfrac{5}{18}, \tfrac{7}{36}, \tfrac{5}{36}, \tfrac{1}{18}, \tfrac{1}{36}\right],\\ \bm{\beta}_{\spadesuit_0}&=\left[1, 1, 1, \tfrac{7}{8}, \tfrac{6}{7}, \tfrac{3}{4}, \tfrac{5}{7}, \tfrac{2}{3}, \tfrac{5}{8}, \tfrac{4}{7}, \tfrac{1}{2}, \tfrac{3}{7}, \tfrac{3}{8}, \tfrac{1}{3}, \tfrac{2}{7}, \tfrac{1}{4}, \tfrac{1}{7}, \tfrac{1}{8}\right], 
\end{align*}
associated to the hypergeometric series $2^{-\frac{4}{3}}3^{-\frac{5}{6}}7^{-\frac{7}{36}}\psi^{-1}{_{18}F_{17}}(\bm{\alpha}_{\spadesuit_0},\bm{\beta}_{\spadesuit_0} \mid t_{\spadesuit_0}).$

\subsubsection{The family $\Csf_2\Lsf_2$}
This family is given by the matrix 
\begin{align*}
\begin{pmatrix*}
3 & 1 & 0 & 0 \\
0 & 4 & 0 & 0 \\
0 & 0 & 3 & 1 \\
0 & 0 & 1 & 3
\end{pmatrix*}.    
\end{align*}  
The symmetry group of this family is $\Z/2\Z,$ generated by $x_2 \mapsto -x_2$ and $x_3 \mapsto -x_3.$ Under the action of this group, $\mathcal{B} \cap \mathcal{C}$ decomposes as follows 
\begin{itemize}
    \item[(a)] \{(1, 1, 1, 1), (1, 3, 2, 2), (1, 1, 3, 3), (2, 4, 1, 1), (2, 2, 2, 2), (2, 4, 3, 3)\},
    \item[(b)] \{(1, 4, 1, 2), (1, 4, 2, 1), (1, 2, 2, 3), (1, 2, 3, 2), (2, 3, 1, 2), (2, 3, 2, 1), (2, 1, 2, 3), (2, 1, 3, 2)\}.
\end{itemize}
\noindent Plugging $(1,1,1,1)$ and $(2,3,1,2)$ into equation \eqref{eq: eqAS}, respectively, and applying the change of variables described in Remark \ref{rem: cov}, we get the following pairs of hypergeometric parameters and their associated hypergeometric series
\begin{itemize}
    \item[(a)] $\bm{\alpha}_{\clubsuit_0}=\left[\tfrac{1}{12},\tfrac{1}{6},\tfrac{5}{12},\tfrac{7}{12},\tfrac{5}{6},\tfrac{11}{12}\right],\bm{\beta}_{\clubsuit_0}=\left[\tfrac{1}{2},\tfrac{1}{3},\tfrac{2}{3},1,1,1\right], 2^{-\frac{5}{6}}3^{-\frac{1}{2}}\psi^{-1}{_6F_5(\bm{\alpha}_{\clubsuit_0},\bm{\beta}_{\clubsuit_0} \mid t_{\clubsuit_0})}.$ 
    \item[(b)]  $\bm{\alpha}_{\clubsuit_4}=\left[\frac{23}{24}, \frac{19}{24}, \frac{17}{24}, \frac{13}{24}, \frac{11}{24}, \frac{7}{24}, \frac{5}{24}, \frac{1}{24}\right], \bm{\beta}_{\clubsuit_4}=\left[1, \frac{5}{6}, \frac{3}{4}, \frac{2}{3}, \frac{1}{2}, \frac{1}{3}, \frac{1}{4}, \frac{1}{6}\right], 2^{-\frac{5}{12}}3^{-\frac{1}{4}}\psi^{-\tfrac{1}{2}}{_8F_7(\bm{\alpha}_{\clubsuit_4},\bm{\beta}_{\clubsuit_4} \mid t_{\clubsuit_4})}.$
\end{itemize} 

Here following the notation in Remark \ref{rem: cov}, we have $l=12$ and $\alpha_1=\tfrac{1}{24}.$ 

\subsubsection{The family $\Csf_2\Csf_2$}
This family is given by the matrix 
\begin{align*}
\begin{pmatrix*}
3 & 1 & 0 & 0 \\
0 & 4 & 0 & 0 \\
0 & 0 & 3 & 1 \\
0 & 0 & 0 & 4
\end{pmatrix*}.    
\end{align*} 
The symmetry group of this family is $\Z/6\Z,$ generated by $x_0 \mapsto e^{\frac{4\pi i}{3}}x_0,x_1 \mapsto x_1, x_2 \mapsto e^{\frac{5\pi i}{3}}x_2, x_3 \mapsto -x_3.$ The action of this group decomposes $\mathcal{B}\cap \mathcal{C}$ into 6 subsets as follows \begin{itemize}
\item[(a)] \{(1, 1, 1, 1),(1, 3, 1, 3),(2, 2, 2, 2),(2, 4, 2, 4)\}, 
\item[(b)] \{(1, 2, 1, 4),(1, 4, 1, 2),(2, 1, 2, 3),(2, 3, 2, 1)\},
\item[(c)] \{(2, 2, 1, 3),(2, 4, 1, 1)\}, 
\item[(d)] \{(1, 1, 2, 4),(1, 3, 2, 2)\},
\item[(e)] \{(1, 2, 2, 3),(1, 4, 2, 1)\},
\item[(f)] \{(2, 1, 1, 4),(2, 3, 1, 2)\}. 
\end{itemize}
Plugging (1, 1, 1, 1), (2, 1, 2, 3), (1, 1, 2, 4), (1, 2, 2, 3), (2, 1, 1, 4) and (2, 2, 1, 3), respectively in \eqref{eq: eqAS} and performing the change of variables described in Remark \ref{rem: cov}, we obtain the following triples of hypergeometric parameters:
\begin{itemize}
    \item[(a)] $\bm{\alpha}_{\symking_0}=\left[\frac{5}{6}, \frac{2}{3}, \frac{1}{3}, \frac{1}{6}\right], \bm{\beta}_{\symking_0}=\left[1, 1, 1, \frac{1}{2}\right],t_{\symking_0}$ 
    \item[(b)] $\bm{\alpha}_{\symking_1}=\left[\frac{11}{12}, \frac{7}{12}, \frac{5}{12}, \frac{1}{12}\right], \bm{\beta}_{\symking_1}=\left[1, \frac{3}{4}, \frac{1}{2}, \frac{1}{4}\right],t_{\symking_1}$ 
    \item[(c)] $\bm{\alpha}_{\symking_2}=\left[\frac{5}{6}, \frac{1}{3}\right], \bm{\beta}_{\symking_2}=\left[1, \frac{2}{3}\right],t_{\symking_2}$ 
    \item[(d)] $\bm{\alpha}_{\symking_3}=\left[\frac{2}{3}, \frac{1}{6}\right], \bm{\beta}_{\symking_3}=\left[1, \frac{1}{3}\right],t_{\symking_3}$ 
    \item[(e)] $\bm{\alpha}_{\symking_4}=\left[\frac{11}{12}, \frac{5}{12}\right], \bm{\beta}_{\symking_4}=\left[1, \frac{5}{6}\right],t_{\symking_4}$ 
    \item[(f)] $\bm{\alpha}_{\symking_5}=\left[\frac{7}{12}, \frac{1}{12}\right], \bm{\beta}_{\symking_5}=\left[1, \frac{1}{6}\right],t_{\symking_5}.$ 
\end{itemize}

\bigskip

\begin{thm}\label{T:hypPeriodList}
\begin{itemize}
    \item[(a)] The family $\Csf_4$ has \begin{align*}
    \text{18 periods associated to } {_{18}F_{17}(\bm{\alpha}_{\heartsuit_0},\bm{\beta}_{\heartsuit_0} \mid t_{\heartsuit_0})}. 
    \end{align*}
    \item[(b)] The family $\Csf_2\Fsf_2$ has \begin{align*}
\text{6 periods associated to }& {_{6}F_{5}(\bm{\alpha}_{\clubsuit_0},\bm{\beta}_{\clubsuit_0} \mid t_{\clubsuit_0})},\\
\text{4 periods associated to each } {_{4}F_{3}(\bm{\alpha}_{\clubsuit_1},\bm{\beta}_{\clubsuit_1} \mid t_{\clubsuit_1})},& \hspace{0.1cm} {_{4}F_{3}(\bm{\alpha}_{\clubsuit_2}, \bm{\beta}_{\clubsuit_2} \mid t_{\clubsuit_2})} \text{ and }{_{4}F_{3}(\bm{\alpha}_{\clubsuit_3}, \bm{\beta}_{\clubsuit_3} \mid t_{\clubsuit_3})}.
\end{align*}
    \item[(c)] The family $\Csf_3\Fsf_1$ has \begin{align*}
\text{18 periods associated to } {_{18}F_{17}(\bm{\alpha}_{\spadesuit_0},\bm{\beta}_{\spadesuit_0} \mid t_{\spadesuit_0})}.
\end{align*}
\item[(d)] The family $\Csf_2\Lsf_2$ has \begin{align*}
\text{6 periods associated to } {_{6}F_{5}(\bm{\alpha}_{\clubsuit_0},\bm{\beta}_{\clubsuit_0} \mid t_{\clubsuit_0})}.\\
\text{8 periods associated to } {_{8}F_{7}(\bm{\alpha}_{\clubsuit_4},\bm{\beta}_{\clubsuit_4} \mid t_{\clubsuit_4})}.
\end{align*}
\item[(e)] The family $\Csf_2\Csf_2$ has \begin{align*}
\text{4 periods associated to each } &{_{4}F_{3}(\bm{\alpha}_{\symking_0},\bm{\beta}_{\symking_0} \mid t_{\symking_0})},{_{4}F_{3}(\bm{\alpha}_{\symking_1},\bm{\beta}_{\symking_1} \mid t_{\symking_1})}\\
\text{2 periods associated to each } &{_{2}F_{1}(\bm{\alpha}_{\symking_2},\bm{\beta}_{\symking_2} \mid t_{\symking_2})},{_{2}F_{1}(\bm{\alpha}_{\symking_3},\bm{\beta}_{\symking_3} \mid t_{\symking_3})},\\&{_{2}F_{1}}(\bm{\alpha}_{\symking_4},\bm{\beta}_{\symking_4} \mid t_{\symking_4}), {_{2}F_{1}}(\bm{\alpha}_{\symking_5},\bm{\beta}_{\symking_5} \mid t_{\symking_5}).
\end{align*}
\end{itemize}
\end{thm}
\begin{proof}
The existence of the period matching the hypergeometric series comes from the computations in the previous sections. 
\end{proof}

\section{Hypergeometric sums over finite fields}\label{S:finiteHyp}

As periods are associated with hypergeometric series over the complex numbers, hypergeometric sums are associated with point counts over finite fields. One of our goals is to explicitly detect this connection, so here we start by defining hypergeometric sums and exploring some of their properties that will be useful for our computations later. 

Let $p$ be an odd prime and $q=p^u$ for some natural number $u.$ We denote $\q:=q-1$ and observe that because $p$ is odd, then $\q$ is always even. Fix $\omega$ a generator of the cyclic multiplicative character group of $\F_q^{\times}$ and $\Theta$ a non-trivial additive character.

\begin{definition}\label{def:gaussSum}
Given $m \in \Z,$ we define the \textit{Gauss sum} to be $$g(m)=\sum_{x \in \F_q^{\times}}\omega^m(x)\Theta(x) \in \C^{\times}.$$     
\end{definition}

\begin{remark}\label{rmk: gausssum}
If $\theta$ is another non-trivial additive character, then $\theta=\Theta_k: x \mapsto \Theta(kx),$ for some $k \in \F_q^{\times}.$ The Gauss sums defined by $\theta$ will differ from the Gauss sum defined in Definition~\ref{def:gaussSum} in the following way: $$g_k(m)=\omega(k)^{-m}g(m).$$ 
\end{remark}

\begin{lemma}[Lemma 3.1.3, \cite{HD20}] \label{hd}
We have the following relations
\begin{itemize}
    \item[\textbf{(a)}] $g(0)=-1.$
    \item[\textbf{(b)}] $g(m)g(-m)=(-1)^mq$ for every $m \not \equiv 0 \Mod \q$ and in particular $g(\frac{\q}{2})=(-1)^{\frac{\q}{2}}q.$
    \item[\textbf{(c)}]\textbf{(Hasse-Davenport)} Let $N\mid\q$ with $N>0,$ we have $g(Nm)=-\omega(N)^{Nm}\prod_{j=0}^{N-1}\dfrac{g(m+j\q/N)}{g(j\q/N)}.$
\end{itemize}
\end{lemma}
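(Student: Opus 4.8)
These are standard identities for Gauss sums, so my plan is to assemble them from the orthogonality relations for characters of $\F_q$ together with the classical Hasse--Davenport product relation. For \textbf{(a)}, I would simply unwind the definition: $g(0)=\sum_{x\in\F_q^{\times}}\Theta(x)$, and since $\Theta$ is a nontrivial additive character $\sum_{x\in\F_q}\Theta(x)=0$, so isolating the term $x=0$ (where $\Theta(0)=1$) gives $g(0)=-1$. For \textbf{(b)}, fix $m\not\equiv 0\pmod{\q}$, so that $\omega^{m}$ is a nontrivial multiplicative character. I would first note that $\omega(-1)=-1$: because $\omega$ generates the character group, it is an isomorphism from $\F_q^{\times}$ onto the group of $\q$-th roots of unity, and so carries the unique element $-1$ of order $2$ to $-1$; hence $\omega^{m}(-1)=(-1)^{m}$. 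Next, the change of variables $x\mapsto -x$ applied to $\overline{g(m)}=\sum_{x}\omega^{-m}(x)\Theta(-x)$ yields $\overline{g(m)}=\omega^{-m}(-1)g(-m)=(-1)^{m}g(-m)$, while expanding $g(m)\overline{g(m)}$ as a double sum and applying orthogonality of additive characters gives $g(m)\overline{g(m)}=q$; combining these two facts produces $g(m)g(-m)=(-1)^{m}q$. The displayed special case is just $m=\q/2$, where $-\q/2\equiv\q/2\pmod{\q}$ and hence $g(\q/2)^{2}=(-1)^{\q/2}q$.

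For \textbf{(c)}, this is exactly the Hasse--Davenport product (multiplication) relation for Gauss sums. In its usual form, for $N\mid q-1$ (so that $\omega^{\q/N}$ has exact order $N$) and any $m$, it states
\[
g(Nm)=\omega(N)^{Nm}\,\frac{\prod_{j=0}^{N-1}g\!\left(m+j\q/N\right)}{\prod_{j=1}^{N-1}g\!\left(j\q/N\right)}.
\]
To put this into the shape stated in the lemma, I would fold the $j=0$ factor of the numerator back into a full quotient: by part \textbf{(a)} we have $g(0)=-1$, so $\prod_{j=1}^{N-1}g(j\q/N)=-\prod_{j=0}^{N-1}g(j\q/N)$, and therefore the right-hand side above equals $-\,\omega(N)^{Nm}\prod_{j=0}^{N-1}\dfrac{g(m+j\q/N)}{g(j\q/N)}$, which is the claim. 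For the product relation itself I would cite a standard reference such as Berndt--Evans--Williams, \emph{Gauss and Jacobi Sums}; a self-contained proof reduces to $N$ prime via a tower of field extensions and, in the prime case, expands $\prod_{j=0}^{N-1}g(m+j\q/N)$ as a sum over $(\F_q^{\times})^{N}$, regroups the terms by the value of the product $x_{0}\cdots x_{N-1}$, and matches the outcome against $g(Nm)$ up to the stated root of unity via a Jacobi-sum identity.

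All of the substance lies in part \textbf{(c)}: parts \textbf{(a)} and \textbf{(b)} follow immediately from character orthogonality, whereas \textbf{(c)} rests on the Hasse--Davenport product relation. That is the step I would take from the literature rather than reprove; once it is in hand, converting its standard form into the normalization of the lemma is only a bookkeeping exercise using part \textbf{(a)}.
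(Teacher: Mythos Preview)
Your proposal is correct and follows essentially the same route as the paper, which simply cites standard references (Berndt--Evans--Williams and Cohen) for these classical Gauss sum identities; you have merely unpacked parts \textbf{(a)} and \textbf{(b)} explicitly and cited the literature for the Hasse--Davenport product relation in \textbf{(c)}, then done the bookkeeping with $g(0)=-1$ to match the stated normalization. Note also that your derivation in \textbf{(b)} correctly yields $g(\q/2)^{2}=(-1)^{\q/2}q$, which is the identity actually used later in the paper; the displayed ``in particular'' in the lemma statement is missing the square.
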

\begin{proof}
See Theorem 1.1.4 in \cite{BW98} and Theorem 3.7.3 in \cite{CO07}.
\end{proof}

\begin{definition}
Given $m,l \in \Z,$ we define the \emph{Jacobi sum} of $m$ and $l$ as $$J_q(m,l)=\sum_{x \in \F_q^{\times}}\omega^m(x)\omega^l(1-x).$$    
\end{definition}

\begin{lemma}\label{lemma: JacobiGauss}
If $m+l \not\equiv 0 \Mod{\q},$ then $$J_q(m,l)=\frac{g(m)g(l)}{g(m+l)}=\frac{(-1)^{m+l}}{q}g(m)g(l)g(-(m+l)).$$  
\end{lemma}
\begin{proof}
Generalizing \cite[Theorem 8.3.1.(d)]{IR13} from the prime case to the prime power case.   
\end{proof}

\begin{remark}\label{rmk: notationJac}
In Section \ref{S:counting} (e.g Lemma \ref{lemma: lemmac41}), we write the point counts in terms of a polynomial in $q,$ hypergeometric functions and Jacobi sums. In that situation, to shorten the notation, for $a,b \in \Q,$ we write $$J(a,b):=J(a\q,b\q).$$   
\end{remark}

Let $\bm{\alpha}=\{\alpha_1,\dots,\alpha_d\}$ and $\bm{\beta}=\{\beta_1,\dots,\beta_d\}$ be multisets in $\Q.$ Suppose also that $\alpha_i-\beta_j \not \in \Z$ for every $i,j=1,\dots,d.$ 

\begin{definition}\label{def: def}
    The \textit{field of definition} of $\bm{\alpha}, \bm{\beta},$ denoted by $K_{\bm{\alpha}, \bm{\beta}}$ is the field generated over $\Q$ by the coefficients of the polynomials $$F(x)=\prod_{i=1}^d(x-e^{2\pi i \alpha_i}) \qquad \text{ and } \qquad G(x)=\prod_{i=1}^d(x-e^{2\pi i \beta_i}).$$ We say that $\bm{\alpha}, \bm{\beta}$ are \textit{defined over $\Q$} if $K_{\bm{\alpha},\bm{\beta}}=\Q.$ We say that $q$ is \textit{good} for $\bm{\alpha},\bm{\beta}$ if $\gcd(q,\lcd(\bm{\alpha} \cup \bm{\beta}))=1,$ where $\lcd(\bm{\alpha} \cup \bm{\beta})$ denotes the least common denominator of the elements of the multiset $\bm{\alpha} \cup \bm{\beta}.$
\end{definition}

Suppose that $\bm{\alpha}, \bm{\beta}$ are defined over $\Q,$ then $F$ and $G$ have rational coefficients and can be written as a product of cyclotomic polynomials. Consequently, it is possible to write $$\dfrac{F}{G}=\dfrac{\prod_{j=1}^{r}x^{p_j}-1}{\prod_{j=1}^{s}x^{q_j}-1} \quad \text{ where }  \quad \{p_1,\dots,p_r\} \cap \{q_1,\dots,q_s\}=\varnothing.$$ 
\begin{definition}[Defined over $\mathbb{Q}$,\cite{BCM15}]
\label{def: def2}
Suppose that $\bm{\alpha},\bm{\beta}$ are defined over $\Q$ and $q$ is good for $\bm{\alpha},\bm{\beta}.$ Let $$D(x)=\gcd\left(\prod_{j=1}^{r}x^{p_j}-1,\quad \prod_{j=1}^{s}x^{q_j}-1\right), \quad\epsilon=(-1)^{\sum_{j=1}^sq_j},\quad M=\prod_{j=1}^{r}p_j^{p_j}\prod_{j=1}^{s}q_j^{-q_j}$$ and consider the function \begin{align*}
s: \Z/\q\Z &\to \Z \\  
m &\mapsto \text{multiplicity of } e^{\frac{2\pi i m}{\q}} \text{ in } D(x).
\end{align*}

For $t \in \F_q^{\times},$ define the \textit{finite field hypergeometric sum} $$H_q(\bm{\alpha},\bm{\beta}\mid t)=\dfrac{(-1)^{r+s}}{1-q}\sum_{m=0}^{q-2}q^{-s(0)+s(m)}\bigg(\prod_{i=1}^{r}g(p_im)\prod_{i=1}^{s}g(-q_im)\bigg)\omega(\epsilon M^{-1} t)^m.$$
\end{definition} 

There have been many different definitions of hypergeometric functions over a finite field over the years. The definition above, for instance, coincides with McCarthy's definition when the parameters satisfy $\q \alpha_i, \q \beta_j \in \Z$ for every $i,j$ (see \autocite{BCM15}). A more general definition that allows some parameters that are not defined over $\Q$ is the \textit{splittable} hypergeometric function, which we do not define here, but refer to \autocite[Definition 3.2.7]{HD20}. These versions all rely on the fact that Gauss sums are finite field analogues of the Gamma function, which are the basis for the definition of classical hypergeometric functions. 

\begin{remark} There is another family of analogous definitions for hypergeometric functions over a finite field, originated by Greene \cite{Gre}, which uses Jacobi sums instead of Gauss sums. This version has some advantages depending on the context; for example, McCarthy used Greene's finite field hypergeometric function to study periods of elliptic curves in \cite{Mac}.  
In particular, Jacobi sums are amenable to computer calculations -- though thanks to the Gross-Koblitz formula \cite{GK79}, Gauss sums are actually quite computationally feasible, and in fact are what we use to check our point counting computations in Section 5.  
\end{remark}

As discussed in the introduction, the sums we deal with in this paper are not expressible in terms of the definitions above, but rather require an even more general one. This is found in the work of Asem Abdelraouf and which is based on the concept of \emph{gamma triples} below.

\begin{definition}[Definition 2.8, \cite{AA25}]
    A \emph{gamma triple} is a triple $(\bm{\gamma}, \bm{\delta}, N)$ such that 
    \begin{enumerate}
        \item $\bm{\gamma} \in \Z^{d+2}$ is a vector such that $\sum_{j=1}^{d+2}\bm{\gamma}_j=0$, and $\gcd(\bm{\gamma}_1,\dots,\bm{\gamma}_{d+2})=1$;
        \item $\bm{\delta}\in\Z^{d+2}$ is any vector.
        \item $N$ is a positive integer. 
    \end{enumerate}
\end{definition}

\begin{definition}\label{def: quot}
    To a gamma triple $(\bm{\gamma},\bm{\delta}, N)$ we associate two vectors $\bm{\alpha}, \bm{\beta} \in \Q^n$, $n\geq0$, by writing
\begin{align}\label{alg: algnew}
\frac{\prod_{j=1}^n(T-e^{2\pi i\alpha_j})}{\prod_{j=1}^n(T-e^{2\pi i \beta_j})}=\frac{\prod_{\gamma_i<0}T^{-\gamma_i}-\zeta_N^{\delta_i}}{\prod_{\gamma_i>0}T^{\gamma_i}-\zeta_N^{-\delta_i}},
\end{align}
    where the greatest common divisor of the denominator and the numerator of the left-hand side is 1. The parameters $\alpha,\beta$ are called the hypergeometric parameters associated to the triple $(\bm{\gamma},\bm{\delta},N)$. We say that the pair $\alpha,\beta$ is defined over $\Q(\zeta_N)$. 
\end{definition}

Let $D_{\bm{\delta}}(T)$ be the greatest common divisor of the numerator and denominator of the right-hand side of the equation above. We define $s_{\bm{\delta}}(m)$ to be the multiplicity of $e^{2\pi im/\q}$ in $D_{\bm{\delta}}(T),$ $\bm{\gamma}^{\bm{\gamma}}:=\prod_{j=1}^{d+2}\gamma_j^{\gamma_j}$ and

\begin{definition}[Definition 2.17,\cite{AA25}]\label{AA25}
Let $(\bm{\bm{\gamma}},\bm{\bm{\delta}},N)$ be a gamma triple. Suppose that $\q$ is divisible by $N$ and let $t \in \F_q^{\times}.$ We define \begin{align}
F_q(\bm{\gamma}, \bm{\delta},N\mid t)=\frac{1}{1-q}\sum_{m=0}^{q-2}\bigg(\prod_{i=1}^{|\bm{\gamma}|}\frac{g(-\gamma_i m+\frac{\delta_i}{N}\q)}{g(\frac{\delta_i}{N}\q)}q^{s_{\bm{\delta}}(-m)-s_{\bm{\delta}}(0)}\bigg)\omega(\bm{\gamma}^{\bm{\gamma}}t)^m.    
\end{align}
\end{definition}

The definition above does not depend on the additive character $\Theta$.  

Indeed, by Remark \ref{rmk: gausssum}, we have
\begin{align*}
    \prod_{i=1}^{|\bm{\gamma}|}\frac{g_k(-\gamma_im+\delta_i\tfrac{\q}{N})}{g_k(\delta_i\tfrac{\q}{N})}&=\prod_{i=1}^{|\bm{\gamma}|}\frac{\omega(k)^{-\gamma_im-\delta_i\tfrac{\q}{N}}g(-\gamma_im+\delta_i\tfrac{\q}{N})}{\omega(k)^{-\delta_i\tfrac{\q}{N}}g(\delta_i\tfrac{\q}{N})}=\omega(k)^{-m\sum_{i=1}^{|\bm{\gamma}|}\gamma_i}\prod_{i=1}^{|\bm{\gamma}|}\frac{g(-\gamma_im+\delta_i\tfrac{\q}{N})}{g(\delta_i\tfrac{\q}{N})}. 
    \end{align*} Since $\sum_{i=1}^{|\bm{\gamma}|}\gamma_i=0,$ we conclude the proof.

Moreover, if $\bm{\delta}=0$ and $N=1$ and the pair $\bm{\alpha},\bm{\beta}$ satisfies equation \eqref{alg: algnew}, then $\bm{\alpha},\bm{\beta}$ is defined over $\Q$, and this definition recovers Definition \ref{def: def2}. 

It follows directly from the definition that $F_q(\bm{\gamma},\bm{\delta},N\mid t) \in \Q(\zeta_{\q},\zeta_p).$ Moreover, since $F_q$ does not depend on the choice of the additive character, as pointed out above, we have $F_q(\bm{\gamma},\bm{\delta},N\mid t) \in \Q(\zeta_{\q}).$ Finally, one can also verify that  the hypergeometric function descends to $\Q(\zeta_N)$, that is, $$F_q(\bm{\gamma},\bm{\delta},N \mid t) \in \Q(\zeta_N).$$ 
(We thank Asem Abdelraouf for describing how this follows from \cite{AA25} in private communication.)

In the remainder of this section, we prove some properties of $F_q$ that will be used in \S \ref{sub: subL}. For what follows, recall that the group $\text{Gal}(\Q(\zeta_{\q}) \mid \Q) \simeq (\Z/\q\Z)^{\times}$ acts by $\sigma_k(\zeta_{\q})=\zeta_{\q}^k$ where $k \in (\Z/\q\Z)^{\times}.$ 

\begin{lemma}\label{lemma: sdelta}
Let $k\delta = (k \delta_1, \hdots, k\delta_{d+2})$, and $k \in (\mathbb{Z} / q^\times \mathbb{Z})^\times$. Then  $s_{\bm{\delta}}(m)=s_{k\bm{\delta}}(km).$    
\end{lemma}

\begin{proof}
By definition, $s_{k\delta}(km)$ is the multiplicity of $\zeta_{q^\times}^{km}$ as a root of the greatest common divisor of $\prod_{\gamma_i < 0} T^{-\gamma_i} - \zeta_{q^\times}^{k\delta_i}$ and $\prod_{\gamma_i > 0} T^{\gamma_i} - \zeta_{q^\times}^{k\delta_i}.$ Letting $\sigma_{k^{-1}}$ be the Galois element taking $\zeta_{q^\times}$ to $\zeta_{q^\times}^{k^{-1}}$, where $k^{-1}$ is the inverse of $k$ in $(\mathbb{Z} / q^\times \mathbb{Z})^\times$, and using the fact that the Galois action preserves greatest common divisors, we see that this is equal to the multiplicity of $\sigma_{k^{-1}}(\zeta_{q^\times}^{km}) = \zeta_{q^\times}^m$ as a root of the g.c.d of $\sigma_{k^{-1}}(\prod_{\gamma_i < 0} T^{-\gamma_i} - \zeta_{q^\times}^{k\delta_i})$ and $\sigma_{k^{-1}}(\prod_{\gamma_i > 0} T^{\gamma_i} - \zeta_{q^\times}^{k\delta_i}),$ which is $s_{\delta}(m)$.
\end{proof}
\begin{lemma}\label{lemma: galoisaction}
Let $k \in (\Z/\q\Z)^{\times},$ then $\sigma_k(F_q(\bm{\gamma},\bm{\delta},N \mid t))=F_q(\bm{\gamma},k\bm{\delta},N \mid t).$
\end{lemma}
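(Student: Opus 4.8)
The plan is to compute the action of $\sigma_k$ directly on the defining sum in Definition~\ref{AA} and identify the result with $F_q(\bm{\gamma},k\bm{\delta},N\mid t)$ after a change of summation variable. First I would recall that, by the two independence statements in Remark~\ref{rem: indep}, the value $F_q(\bm{\gamma},\bm{\delta},N\mid t)$ does not depend on the choice of multiplicative character $\omega$; this freedom is what will let me absorb the Galois twist. Concretely, $\sigma_k$ fixes $\Q(\zeta_p)$ and acts on $\Q(\zeta_{\q})$ by $\zeta_{\q}\mapsto\zeta_{\q}^{k}$, so on Gauss sums it acts by $\sigma_k(g(m))=\sigma_k\!\left(\sum_{x}\omega^{m}(x)\Theta(x)\right)=\sum_{x}\omega^{km}(x)\Theta(x)=g(km)$, since $\omega$ takes values in $\mu_{\q}\subset\Q(\zeta_{\q})$ while $\Theta$ takes values in $\Q(\zeta_p)$ and is therefore fixed. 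Applying $\sigma_k$ termwise to
\[
F_q(\bm{\gamma},\bm{\delta},N\mid t)=\frac{1}{1-q}\sum_{m=0}^{q-2}\frac{g\!\left(-\bm{\gamma}m+\tfrac{\bm{\delta}}{N}\q\right)}{g\!\left(\tfrac{\bm{\delta}}{N}\q\right)}q^{\,s_{\bm{\delta}}(-m)-s_{\bm{\delta}}(0)}\,\omega(\bm{\gamma}^{\bm{\gamma}}t)^{m},
\]
and using that $s_{\bm{\delta}}(m)$, $q$, and $\bm{\gamma}^{\bm{\gamma}}t\in\F_q^{\times}$ are rational (or character values to be handled below), I get a sum whose Gauss-sum factors become $g\!\left(-k\bm{\gamma}m+\tfrac{k\bm{\delta}}{N}\q\right)\big/ g\!\left(\tfrac{k\bm{\delta}}{N}\q\right)$ and whose character factor becomes $\omega(\bm{\gamma}^{\bm{\gamma}}t)^{km}$.

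Next I would make the substitution $m\mapsto a m$ where $a\in(\Z/\q\Z)^{\times}$ satisfies $ak\equiv 1\pmod{\q}$ (equivalently, relabel the index of summation, which runs over a complete residue system mod $\q$). Under this substitution the character factor returns to $\omega(\bm{\gamma}^{\bm{\gamma}}t)^{m}$, the Gauss-sum argument $-k\bm{\gamma}m$ becomes $-k a\bm{\gamma}m\equiv-\bm{\gamma}m\pmod{\q}$ (Gauss sums depend only on the exponent mod $\q$), so the Gauss-sum factor becomes exactly $g\!\left(-\bm{\gamma}m+\tfrac{k\bm{\delta}}{N}\q\right)\big/ g\!\left(\tfrac{k\bm{\delta}}{N}\q\right)$, which is the Gauss-sum factor for the triple $(\bm{\gamma},k\bm{\delta},N)$. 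For the power-of-$q$ factor, I need $s_{\bm{\delta}}(-km)=s_{k\bm{\delta}}(-m)$ and $s_{\bm{\delta}}(0)=s_{k\bm{\delta}}(0)$ after the substitution; the first follows from Lemma~\ref{lemma: sdelta} (applied with index $-km$, noting $s_{\bm{\delta}}(-km)=s_{k\bm{\delta}}(k\cdot(-km)\cdot k^{-1})$ — more cleanly, Lemma~\ref{lemma: sdelta} gives $s_{\bm{\delta}}(m')=s_{k\bm{\delta}}(km')$, so with $m'=-am$ and $km'\equiv -m$ we get $s_{\bm{\delta}}(-am)=s_{k\bm{\delta}}(-m)$), and the second is the $m=0$ case. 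Assembling the three factors shows the transformed sum is precisely $F_q(\bm{\gamma},k\bm{\delta},N\mid t)$.

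There is one genuine subtlety I should pin down carefully: $\bm{\gamma}^{\bm{\gamma}}=\prod_j\gamma_j^{\gamma_j}$ can be a positive rational number, not an integer, and one must check that $\omega(\bm{\gamma}^{\bm{\gamma}}t)$ is well-defined and that $\sigma_k$ acts on it as $\omega(\bm{\gamma}^{\bm{\gamma}}t)\mapsto\omega(\bm{\gamma}^{\bm{\gamma}}t)^{k}$ — this is clear since $\bm{\gamma}^{\bm{\gamma}}t$ lands in $\F_q^{\times}$ (having cleared denominators using $\sum_j\gamma_j=0$) and $\omega$ takes values in $\mu_{\q}$, on which $\sigma_k$ is $x\mapsto x^k$. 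I expect the main obstacle to be purely bookkeeping: keeping the two sign conventions ($-\bm{\gamma}m$ inside the Gauss sum versus $-m$ inside $s_{\bm{\delta}}$) consistent through the change of variables, and making sure the index substitution $m\mapsto am$ is applied uniformly to every occurrence of $m$ (including inside $s_{\bm{\delta}}$ and the exponent of $\omega$) so that nothing is double-counted. Once the substitution is performed consistently and Lemma~\ref{lemma: sdelta} is invoked for the $q$-power, the identity drops out, and I would close the argument by noting that the result is independent of all auxiliary choices by Remark~\ref{rem: indep}, so the equality $\sigma_k(F_q(\bm{\gamma},\bm{\delta},N\mid t))=F_q(\bm{\gamma},k\bm{\delta},N\mid t)$ holds unconditionally.
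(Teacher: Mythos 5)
Your proposal is correct and follows essentially the same route as the paper: apply $\sigma_k$ termwise using $\sigma_k(g(m))=g(km)$ and $\sigma_k(\omega)=\omega^k$, reindex the sum by an invertible multiple of $m$, and invoke Lemma~\ref{lemma: sdelta} to convert the $q$-power exponent from $s_{\bm{\delta}}$ to $s_{k\bm{\delta}}$. The only cosmetic difference is that you substitute $m\mapsto am$ with $ak\equiv 1\pmod{\q}$ after applying $\sigma_k$, whereas the paper phrases the same reindexing as $m\mapsto km$; the bookkeeping and the appeal to Lemma~\ref{lemma: sdelta} are identical.
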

\begin{proof}
A simple computation shows that $\sigma_k(\omega(x))=\omega^k(x)$ and $\sigma_k(g(m))=g(km).$ Therefore, by Lemma \ref{lemma: sdelta}  \begin{align*}
    \sigma_k(F_q(\bm{\gamma},\bm{\delta},N \mid t))&=-\tfrac{1}{\q}\sum_{m=0}^{q-2}\prod_{i=1}^{\mid\bm{\gamma}\mid}\frac{\sigma_k(g(-\gamma_i m+\delta_i\tfrac{\q}{N}))}{\sigma_k(g(\delta_i\frac{\q}{N}))}q^{s_{\bm{\delta}}(-m)-s_{\bm{\delta}}(0)}\sigma_k(\omega(\bm{\gamma}^{\bm{\gamma}}t))^m\\&=-\tfrac{1}{\q}\sum_{m=0}^{q-2}\prod_{i=1}^{\mid\bm{\gamma}\mid}\frac{g(-k\gamma_i m+k\delta_i\tfrac{\q}{N})}{g(k\delta_i\frac{\q}{N})}q^{s_{\bm{k\delta}}(-km)-s_{\bm{k\delta}}(0)}\omega(\bm{\gamma}^{\bm{\gamma}}t)^{km}.   
    \end{align*}
    Making the change of variable $m \mapsto km,$ we get \begin{align*}
    \sigma_k(F_q(\bm{\gamma},\bm{\delta},N \mid t))=-\tfrac{1}{\q}\sum_{m=0}^{q-2}\prod_{i=1}^{\mid\bm{\gamma}\mid}\frac{g(-\gamma_i m+k\delta_i\tfrac{\q}{N})}{g(k\delta_i\frac{\q}{N})}q^{s_{\bm{k\delta}}(-m)-s_{\bm{k\delta}}(0)}\omega(\bm{\gamma}^{\bm{\gamma}}t)^{m}=F_q(\bm{\gamma},k\bm{\delta},N \mid t).    
    \end{align*}  
\end{proof}

\begin{lemma}\label{lm: Fp}
$F_q(\bm{\gamma},p\bm{\delta},N \mid t)=F_q(\bm{\gamma},\bm{\delta},N \mid t^p).$   
\end{lemma}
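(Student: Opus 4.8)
The plan is to reindex the sum defining $F_q(\bm{\gamma},p\bm{\delta},N\mid t)$ by the substitution $m\mapsto pm$ and to check that, factor by factor, this turns it into the sum defining $F_q(\bm{\gamma},\bm{\delta},N\mid t^p)$. Since $q=p^u$ we have $\q=q-1\equiv-1\Mod{p}$, so $\gcd(p,\q)=1$ and $m\mapsto pm$ is a bijection of $\Z/\q\Z$; as the sum runs over a full set of residues modulo $\q$, this reindexing does not change its value and is therefore legitimate.

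The one non-formal ingredient needed is the Frobenius invariance of Gauss sums, namely $g(pm)=g(m)$ for all $m\in\Z$. To get this cleanly I would first invoke Remark~\ref{rem: indep}(2), which says that $F_q$ does not depend on the choice of nontrivial additive character, and so reduce to the case $\Theta(x)=\zeta_p^{\mathrm{Tr}_{\F_q/\F_p}(x)}$. Then, since $x\mapsto x^p$ permutes $\F_q^\times$ and $\mathrm{Tr}_{\F_q/\F_p}(x^p)=\mathrm{Tr}_{\F_q/\F_p}(x)$, replacing $x$ by $x^p$ in $g(m)=\sum_{x\in\F_q^\times}\omega^m(x)\Theta(x)$ gives $g(m)=\sum_{x\in\F_q^\times}\omega^{pm}(x)\Theta(x^p)=\sum_{x\in\F_q^\times}\omega^{pm}(x)\Theta(x)=g(pm)$.

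With this in hand, apply $m\mapsto pm$ to the sum for $F_q(\bm{\gamma},p\bm{\delta},N\mid t)$. Componentwise, $g\!\left(-\bm{\gamma}(pm)+\tfrac{p\bm{\delta}}{N}\q\right)=g\!\left(p\bigl(-\bm{\gamma}m+\tfrac{\bm{\delta}}{N}\q\bigr)\right)=g\!\left(-\bm{\gamma}m+\tfrac{\bm{\delta}}{N}\q\right)$, and likewise the denominator $g(\tfrac{p\bm{\delta}}{N}\q)$ equals $g(\tfrac{\bm{\delta}}{N}\q)$. For the power of $q$, Lemma~\ref{lemma: sdelta} applied with $k=p$ gives $s_{p\bm{\delta}}(p\ell)=s_{\bm{\delta}}(\ell)$ for every $\ell$, hence $s_{p\bm{\delta}}(-pm)-s_{p\bm{\delta}}(0)=s_{\bm{\delta}}(-m)-s_{\bm{\delta}}(0)$. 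Finally $\omega(\bm{\gamma}^{\bm{\gamma}}t)^{pm}=\omega\!\left((\bm{\gamma}^{\bm{\gamma}}t)^p\right)^m=\omega\!\left(\bm{\gamma}^{\bm{\gamma}}t^p\right)^m$, using that the reduction of the rational number $\bm{\gamma}^{\bm{\gamma}}$ lies in the prime field $\F_p\subseteq\F_q$ and is therefore fixed by $x\mapsto x^p$. Since the constant $\tfrac{1}{1-q}$ is untouched, assembling these identities shows the reindexed sum is precisely $F_q(\bm{\gamma},\bm{\delta},N\mid t^p)$, which yields the claim.

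The main obstacle is really just the Gauss-sum relation $g(pm)=g(m)$: it holds for the standard additive character but not for an arbitrary one, so the reduction via Remark~\ref{rem: indep}(2) is genuinely needed; one should also be careful that $\bm{\gamma}^{\bm{\gamma}}$ reduces into $\F_p$ so that $(\bm{\gamma}^{\bm{\gamma}})^p=\bm{\gamma}^{\bm{\gamma}}$ in $\F_q$. Everything else is bookkeeping, so a short paragraph will suffice once these two points are recorded.
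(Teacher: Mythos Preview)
Your proof is correct and follows essentially the same approach as the paper: reindex by $m\mapsto pm$, use the Frobenius invariance $g(pm)=g(m)$ to simplify numerator and denominator, invoke Lemma~\ref{lemma: sdelta} with $k=p$ for the $s$-exponents, and absorb the extra $p$th power into the argument via $(\bm{\gamma}^{\bm{\gamma}})^p=\bm{\gamma}^{\bm{\gamma}}$ in $\F_q$. If anything, you are slightly more careful than the paper, which simply asserts $g(m)=g(pm)$ without the reduction to the trace character and leaves the $\bm{\gamma}^{\bm{\gamma}}\in\F_p$ point implicit.
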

\begin{proof}
Observe that using the notation of Remark \ref{rmk: gausssum}, we have \begin{equation}\label{eq: gp}
g_k(pm)=g_{k^p}(m), 
\end{equation} for every $m.$ Since Definition \ref{AA25} does not depend on the choice of additive character, we may suppose that $k=1.$ Therefore, \begin{align*}
F_q(\bm{\gamma},p\bm{\delta},N \mid t)&=-\tfrac{1}{\q}\sum_{m=0}^{q-2}\prod_{i=1}^{|\bm{\gamma}|}\frac{g(-\gamma_im+\tfrac{p\delta_i\q}{N})}{g(\tfrac{p\delta_i\q}{N})}q^{s_{p\bm{\delta}}(-m)-s_{p\bm{\delta}}(0)}\omega(\bm{\gamma^{\gamma}}t)^m\\&=-\tfrac{1}{\q}\sum_{m=0}^{q-2}\prod_{i=1}^{|\bm{\gamma}|}\frac{g(-\gamma_ipm+\tfrac{p\delta_i\q}{N})}{g(\tfrac{p\delta_i\q}{N})}q^{s_{p\bm{\delta}}(-pm)-s_{p\bm{\delta}}(0)}\omega(\bm{\gamma^{\gamma}}t)^{pm}\\&=-\tfrac{1}{\q}\sum_{m=0}^{q-2}\prod_{i=1}^{|\bm{\gamma}|}\frac{g(p(-\gamma_im+\tfrac{\delta_i\q}{N}))}{g(p\tfrac{\delta_i\q}{N})}q^{s_{p\bm{\delta}}(-pm)-s_{p\bm{\delta}}(0)}\omega(\bm{\gamma^{\gamma}}t)^{pm}\\&\stackrel{\eqref{eq: gp}}{=}-\tfrac{1}{\q}\sum_{m=0}^{q-2}\prod_{i=1}^{|\bm{\gamma}|}\frac{g(-\gamma_im+\tfrac{\delta_i\q}{N})}{g(\tfrac{\delta_i\q}{N})}q^{s_{p\bm{\delta}}(-pm)-s_{p\bm{\delta}}(0)}\omega(\bm{\gamma^{\gamma}}t)^{pm}\\&\stackrel{\text{Lemma } \ref{lemma: sdelta}}{=}-\tfrac{1}{\q}\sum_{m=0}^{q-2}\prod_{i=1}^{|\bm{\gamma}|}\frac{g(-\gamma_im+\tfrac{\delta_i\q}{N})}{g(\tfrac{\delta_i\q}{N})}q^{s_{\bm{\delta}}(-m)-s_{\bm{\delta}}(0)}\omega(\bm{\gamma}^{\bm{\gamma}}t^p)^m\\&=F_q(\bm{\gamma},\bm{\delta},N \mid t^p).
\end{align*}     
\end{proof}

\section{Number of points}\label{S:numPoints}\label{S:pointcounts} 
\label{S:counting}
The goal of this section is to compute the number of points of the five Delsarte pencils. We start with a classical formula due to Koblitz that allows one to express the point counts as products of Gauss sums. Then we explain the general step-by-step on how to apply the formula. Finally, we compute the point counts and match them with hypergeometric sums for each of the pencils, but with greater focus on $\Csf_4$ and $\Csf_2\Fsf_2.$

\subsection{Koblitz's formula}

Let $X$ be the hypersurface defined by the polynomial $$\sum_{i=1}^ra_ix_0^{v_{0i}}\cdots x_n^{v_{ni}} \in \F_q[x_0,\dots,x_n],$$ where $a_i \neq 0$ for at least one $i \in \{1,\dots,r\}.$ Let $G=(\mu_{\q})^r/\Delta,$ where $\mu_{\q}$ is the group of $\q$-roots of unity in $\C^{\times}$ and $\Delta$ is the diagonal of $(\mu_{q^\times})^r$.

Suppose that $\q$ does not divide any of the $v_{ij},$ $i \in \{1,\dots,r\}, j \in \{0,\dots,n\}.$ We have a bijective correspondence between multiplicative characters of $(\mu_{\q})^r/\Delta$ and the set $S$ of solutions of the following linear system: \begin{align}\label{sys:sys1}
\sum_{i=1}^rv_{ij}s_i \equiv 0 \Mod \q \text{ and } \sum_{i=1}^rs_i \equiv 0 \Mod \q \text{ for every }j=0,\dots,n,    
\end{align} given by taking a solution $s = (s_1, \dots, s_r)$ to the character $\chi_s$, defined by $\chi_s(x_1, \dots, x_r) = \omega(\prod_{=1}^r x_i^{s_i})$. 

We define $\omega(a)^{-s}=\omega(a_1^{-s_1}\cdots a_r^{-s_r})$ and \begin{align}\label{c:c}
 c_s&=\dfrac{(\q)^{n-r+1}}{q}\prod_{i=1}^rg(s_i), \text{ for } s \in S \setminus \{(0,\dots,0)\} \quad \\ \quad 
 c_0&=c_{(0,\dots,0)}=(\q)^{n-r+1}\dfrac{(\q)^{r-1}-(-1)^{r-1}}{q}.
\end{align}

Let $U$ denote the intersection of $X$ with the $n$-dimensional torus $\mathbb{G}_m$ in $\mathbb{P}^n.$ Then, Koblitz's Theorem \cite[Theorem 1]{K83} can be formulated as \begin{thm}\label{thm:Kob}
$$\#U(\F_q)=\sum_{s \in S}\omega(a)^{-s}c_s.$$     
\end{thm}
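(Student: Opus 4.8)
The plan is to run the classical character-sum argument of Weil, in the form used by Koblitz. Write $f=\sum_{i=1}^r a_ix^{v_i}$ for the defining polynomial, where $x^{v_i}=x_0^{v_{i0}}\cdots x_n^{v_{in}}$ and $v_{ij}$ denotes the exponent of $x_j$ in the $i$-th monomial; since $X\subset\mathbb{P}^n$ we may take $f$ homogeneous of some degree $d$. First I would pass from the projective count to an affine count on the torus: by homogeneity the $q-1$ rescalings of a tuple in $(\F_q^{\times})^{n+1}$ all represent the same projective point and all lie on $\{f=0\}$, so $\#U(\F_q)=\tfrac{1}{q-1}N$ with $N=\#\{x\in(\F_q^{\times})^{n+1}:f(x)=0\}$. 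Next, detecting the vanishing of $f$ through the additive orthogonality $\tfrac1q\sum_{t\in\F_q}\Theta(ty)=[y=0]$ gives $N=\tfrac1q\sum_{x}\sum_{t\in\F_q}\Theta(tf(x))$, and pulling out the term $t=0$ contributes $(q-1)^{n+1}/q$.

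The heart of the computation is the remaining sum $\Sigma:=\sum_{t\in\F_q^{\times}}\sum_{x\in(\F_q^{\times})^{n+1}}\Theta(tf(x))$, which I would evaluate by \emph{linearising} the monomials. Introduce auxiliary variables $y_1,\dots,y_r\in\F_q^{\times}$, use $\Theta(tf(x))=\sum_{y\in(\F_q^{\times})^r}\Theta\bigl(t\sum_i a_iy_i\bigr)\prod_{i=1}^r[y_i=x^{v_i}]$, and expand each indicator via the multiplicative orthogonality $[y_i=x^{v_i}]=\tfrac{1}{q-1}\sum_{s_i=0}^{q-2}\omega^{s_i}(y_i)\,\omega^{-s_i}(x^{v_i})$. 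After interchanging the order of summation,
\[
\Sigma=(q-1)^{-r}\sum_{s}\Bigl(\sum_{x\in(\F_q^{\times})^{n+1}}\prod_{i=1}^r\omega^{-s_i}(x^{v_i})\Bigr)\Bigl(\sum_{t\in\F_q^{\times}}\sum_{y\in(\F_q^{\times})^r}\Theta\bigl(t{\textstyle\sum_i a_iy_i}\bigr)\prod_{i=1}^r\omega^{s_i}(y_i)\Bigr),
\]
the sum being over $s=(s_1,\dots,s_r)\in(\Z/(q-1)\Z)^r$. The first parenthesised factor equals $\prod_{j=0}^n\sum_{x_j\in\F_q^{\times}}\omega^{-\sum_i s_iv_{ij}}(x_j)$, hence is $(q-1)^{n+1}$ when $\sum_i s_iv_{ij}\equiv0\pmod{q-1}$ for every $j$ and $0$ otherwise; in the second, the $y$-sum factors over $i$ and the substitution $z_i=ta_iy_i$ turns each $y_i$-sum into $\omega^{-s_i}(ta_i)g(s_i)$ (by the definition of the Gauss sum), so the bracket becomes $\sum_{t\in\F_q^{\times}}\omega^{-\sum_i s_i}(t)\prod_i\omega^{-s_i}(a_i)g(s_i)$, which is $(q-1)\prod_i\omega^{-s_i}(a_i)g(s_i)$ when $\sum_i s_i\equiv0\pmod{q-1}$ and $0$ otherwise. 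The surviving indices $s$ are exactly the solutions $S$ of the system~\eqref{sys:sys1}.

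Collecting the powers of $q-1$ — the $(q-1)^{-r}$ from the orthogonality insertions, $(q-1)^{n+1}$ from the $x$-sum, and $q-1$ from the $t$-sum — I obtain $\Sigma=(q-1)^{\,n+2-r}\sum_{s\in S}\omega(a)^{-s}\prod_{i=1}^r g(s_i)$ with $\omega(a)^{-s}=\prod_i\omega^{-s_i}(a_i)$ as defined above. Therefore $N=\tfrac1q\bigl((q-1)^{n+1}+\Sigma\bigr)$ and $\#U(\F_q)=\tfrac{1}{q-1}N=\tfrac1q\bigl((q-1)^n+(q-1)^{\,n+1-r}\sum_{s\in S}\omega(a)^{-s}\prod_i g(s_i)\bigr)$. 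Finally I would separate the term $s=0$: by Lemma~\ref{hd}(a) one has $\prod_i g(0)=(-1)^r$, so that term contributes $\tfrac{(q-1)^n}{q}+\tfrac{(-1)^r(q-1)^{n+1-r}}{q}$, which coincides with $c_0=(q-1)^{n-r+1}\tfrac{(q-1)^{r-1}-(-1)^{r-1}}{q}$ because $-(-1)^{r-1}=(-1)^r$, while for $s\neq0$ the term equals $\tfrac{(q-1)^{n-r+1}}{q}\prod_i g(s_i)=c_s$. This is exactly the asserted identity $\#U(\F_q)=\sum_{s\in S}\omega(a)^{-s}c_s$.

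Once the linearisation is set up the argument is essentially bookkeeping; the only steps that genuinely require care are tracking the powers of $q-1$ across the interchanges of summation and checking that the $s=0$ term reproduces the slightly unusual closed form of $c_0$ (in particular its sign $(-1)^{r-1}$). The standing hypothesis that $q-1$ divides none of the $v_{ij}$ plays no role in this identity itself; it enters the surrounding set-up only to identify $S$ with the character group of $(\mu_{q-1})^r/\Delta$.
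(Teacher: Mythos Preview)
Your argument is correct and carries out in full the classical Weil--Koblitz character-sum computation. The paper does not prove this statement itself but defers to \cite[Theorem~3.3.3]{HD20}; your write-up is the standard proof that underlies that reference, so the approaches coincide.
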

\begin{proof}
See \cite[Theorem 3.3.3]{HD20}{}.    
\end{proof} 

In the next section, we apply Theorem \ref{thm:Kob} to our five families to obtain a formula for their number of points over a finite field $\mathbb{F}_q.$

\subsection{Step by step}
\label{S: step}
In this section we explain how to apply Koblitz's formula (Theorem \ref{thm:Kob}) to obtain explicit point counts for each of our five families.

\subsubsection{Step 1: Computing the characters}
\label{subs: charac}

Given one of the invertible pencils, we first consider the matrix $R$ associated to it as described in \eqref{sys:sys1}. We compute the Smith Normal Form of $R$ which consists of a diagonal matrix $\text{SNF}(R)$ and two unimodular matrices $P,Q$ such that $PRQ=\text{SNF}(R).$ Still following the notation of the previous section, we write $Y=Q^{-1}(s_1,\dots,s_r)^T.$

Now the system \begin{align}\label{alg: algsnf}
    \text{SNF}(R)Y \equiv 0 \Mod{\q}
\end{align} is easily solved as $\text{SNF}(R)$ is a diagonal matrix. Moreover, \eqref{alg: algsnf} can be rewritten as $$PR(s_1,\dots,s_r)^T \equiv 0 \Mod{\q}$$ and we can finally extract the solutions $(s_1,\dots,s_r)^T$ of the original system in \eqref{sys:sys1}. 

\subsubsection{Step 2: Decomposing the set $S$}
\label{subs: decomp}
The set of solutions of \eqref{sys:sys1}, denoted by $S,$ can be decomposed into a finite disjoint union $$S=\bigcup_{i \in I}S_i,$$ where $S_i$ are subsets of $S$ whose elements depend on a unique parameter taking values in $\Z/\q\Z.$ 

\subsubsection{Step 3: Computing the point counts}
Koblitz's formula gives the number of points on the intersection of a hypersurface in $\mathbb{P}^n$ with $\mathbb{G}_m$. We now specialize to the case $n=3$ and a quartic hypersurface $X.$ We first compute the number of points on the intersection of the quartic $X \subset \mathbb{P}^3$ with $\mathbb{G}_m,$ and then use the formula several times to obtain the number of points on the intersection of $X$ with the smaller-dimensional tori of $\mathbb{P}^3$. Finally, we use the decomposition of $S$ mentioned above to break the point counts into smaller pieces $$\#U(\F_q)=\sum_{i \in I}\sum_{s \in S_i}\omega(a)^{-s}c_s.$$

\subsection{Explicit computations}
In this Section we treat $\Csf_4$ and $\Csf_2\Fsf_2$ in detail and briefly show the results for the other three pencils. The pencil $\Csf_4$ is a `warm-up' example and a good candidate to start with. Similarly to $\Csf_3\Fsf_1,$ it has only one hypergeometric function associated with the point counts, which simplifies the computations greatly. The family $\Csf_2\Fsf_2$ illustrates well the use of our new techniques. Along with $\Csf_2\Csf_2,$ it differs from the other three families as its point counts cannot be described only in terms of hypergeometric sums defined over $\Q.$ Throughout this section, $\psi \in \F_q$ is the deformation parameter as defined in \eqref{eq: pencil}.

\subsubsection[]{The family $\Csf_4$} We recall that $\Csf_4$ is the pencil defined by the one-parameter deformation of the invertible polynomial $x_0^3x_1+x_1^3x_2+x_2^3x_3+x_3^4.$ Following \textit{Step 1} described in Section \ref{subs: charac}, the system in \eqref{sys:sys1} is given by the following equation:
\begin{align*}
\begin{pmatrix}
3 & 0 & 0 &0& 1\\
1 & 3 & 0 &0 & 1\\
0 & 1 & 3 &0 & 1\\
0 & 0 & 1 &4 & 1\\
1 & 1 & 1 & 1 &1
\end{pmatrix}\begin{pmatrix}
s_1 \\
s_2 \\
s_3 \\
s_4 \\
s_5
\end{pmatrix}\equiv 0 \Mod \q.
\end{align*} 
Notice that the matrix on the left-hand side consists of the transpose $A^T$ of the matrix $A$ associated to the monomials in the pencil augmented by a row of 1s. The row of 1s corresponds to the second condition in \eqref{sys:sys1}, which asks for the sum of the $s_i$ to be equivalent to 0 modulo $q^\times$.

We obtain the solutions $$S=\{(-9,-6,-7,-5,27)m : m \in \Z/\q\Z\},$$ and since the set depends on a unique parameter, it is already the decomposition mentioned in \textit{Step 2}. To proceed with \textit{Step 3}, we prove two lemmas. In Lemma \ref{lemma: lemmac41} we find the point counts on the maximal torus, and in Lemma \ref{lemma: lemmac4} we find the point counts on the smaller tori. Summing the results of Lemmas \ref{lemma: lemmac41} and \ref{lemma: lemmac4} leads to the complete point count in Proposition \ref{prop: propc4}.

\begin{lemma}\label{lemma: lemmac41}
\begin{itemize}
    \item[(a)] If $q\not \equiv 1 \Mod{3},$ then $\#U_{\psi}(\F_q)=q^2-3q+3+H_q(\bm{\alpha}_{\heartsuit_0},\bm{\beta}_{\heartsuit_0}|t_{\heartsuit_0}).$
    \item[(b)] If $q \equiv 1 \Mod{3}$ and $q \not\equiv 1 \Mod{9},$ then $\#U_{\psi}(\F_q)=q^2-3q+5+H_q(\bm{\alpha}_{\heartsuit_0},\bm{\beta}_{\heartsuit_0}|t_{\heartsuit_0}).$
    \item[(c)] If $q \equiv 1 \Mod{9},$ then \begin{align*}
        \#U_{\psi}(\F_q)&=q^2-3q+5+H_q(\bm{\alpha}_{\heartsuit_0},\bm{\beta}_{\heartsuit_0}|t_{\heartsuit_0})-\sum_{i \in (\Z/9\Z)^{\times}}J_q(\tfrac{i}{9},\tfrac{2i}{9}).
    \end{align*} 
\end{itemize}    
\end{lemma}
\begin{proof}
Let $m \in \Z/\q\Z.$ The coefficients in (\ref{c:c}) are given by \begin{align*}
c_{(0,0,0,0,0)}&=\dfrac{q^3-4q^2+6q-4}{\q}.\\
c_{(-9m,-6m,-7m,-5m,27m)}&=\frac{1}{q\q}g(-9m)g(-6m)g(-7m)g(-5m)g(27m), \text{ for } m \neq 0.
\end{align*}

By Theorem \ref{thm:Kob}, 

\begin{equation}\label{eq:torus}
\#U_{\psi}(\F_q)=\sum_{s \in S}\omega(a)^{-s}c_s=\dfrac{q^3-4q^2+6q-4}{\q}+\frac{1}{q\q}\sum_{m=1}^{q-2}g(-9m)g(-6m)g(-7m)g(-5m)g(27m)\omega(-27\psi)^{-27m}. 
\end{equation}
On the other hand, using Definition \ref{def: def2}, we can find hypergeometric parameters corresponding to the same point count in the following way. First, consider the quotient of polynomials determined by $S$ as follows: $$\frac{x^{27}-1}{(x^9-1)(x^6-1)(x^7-1)(x^5-1)}.$$ Notice that the gcd between the numerator and denominator is $D(x)=x^9-1.$ Let  $$\Tilde{\mathcal{B}}:=\{m \in \Q \cap [0,1]: D(e^{2\pi i m/\q})=0\}=\Big\{0,\tfrac{\q}{3},\tfrac{2\q}{3},\tfrac{\q}{9},\tfrac{2\q}{9},\tfrac{4\q}{9},\tfrac{5\q}{9},\tfrac{7\q}{9},\tfrac{8\q}{9}\Big\}.$$ Moreover, as $D(x)$ is separable over $\C$, we have $s(m)=1$ for all $m \in \Tilde{\mathcal{B}}$ . Also note that $M=\frac{27^{27}}{9^96^67^75^5}$ and $\epsilon=-1.$   We proceed once again with the three cases based on $q$ modulo $3$ and $9$.

\begin{itemize}
    \item[(a)] If $q \not\equiv 1 \Mod{3},$ then $\mathcal{B}:=\{m \in \Z/\q\Z: s(m)>0\}=\{0\}$ and we have 
\begin{align}\label{eq: hp}
    H_q(\bm{\alpha}_{\heartsuit_0},\bm{\beta}_{\heartsuit_0}|t_{\heartsuit_0})&=\frac{1}{\q}\sum_{m=0}^{q-2}q^{-1+s(m)}g(-9m)g(-6m)g(-7m)g(-5m)g(27m)\omega(-27\psi)^{-27m}\\&=\frac{1}{\q}(g(0)^5+\frac{1}{q}\sum_{m=1}^{q-2}g(-9m)g(-6m)g(-7m)g(-5m)g(27m)\omega(-27\psi)^{-27m})\notag\\&=-\frac{1}{\q}+\frac{1}{q\q}\sum_{m=1}^{q-2}g(-9m)g(-6m)g(-7m)g(-5m)g(27m)\omega(-27\psi)^{-27m}\notag.
\end{align} 
    Comparing \eqref{eq:torus} and \eqref{eq: hp}, one can see that $$\#U_{\psi}(\F_q)=q^2-3q+3+H_q(\bm{\alpha}_{\heartsuit_0},\bm{\beta}_{\heartsuit_0}|t_{\heartsuit_0}).$$
    \item[(b)] If $q \equiv 1 \Mod{3}$ and $q \not \equiv 1 \Mod{9},$ then $\mathcal{B}=\Big\{0,\frac{\q}{3},\frac{2\q}{3}\Big\}.$ The special coefficients are \begin{align*}
    c_{\frac{\q}{3}}=c_{\frac{2\q}{3}}=\frac{1}{q\q }g(0)^3g(\tfrac{\q}{3})g(\tfrac{2\q}{3})=\frac{(-1)^{\frac{\q}{3}}}{\q}=\frac{-1}{\q}.   
    \end{align*} Therefore, \eqref{eq:torus} takes the form \begin{align}\label{eq: torus2}
    \sum_{s \in S}\omega(a)^{-s}=\frac{q^3-4q^2+6q-4}{\q}-\frac{2q}{q\q}+\frac{1}{q\q}\sum_{m \notin \mathcal{B}}g(-9m)g(-6m)g(-7m)g(-5m)g(27m)\omega(-27\psi)^{-27m}. 
    \end{align}
The hypergeometric sum is given by \begin{align}\label{eq: hp2}
H_q(\bm{\alpha}_{\heartsuit_0},\bm{\beta}_{\heartsuit_0}|t_{\heartsuit_0})&=\frac{1}{\q}(g(0)^5+2g(0)^3g(\tfrac{\q}{3})g(\tfrac{2\q}{3}))\\&+\frac{1}{q\q}\sum_{m \notin \mathcal{B}}g(-9m)g(-6m)g(-7m)g(-5m)g(27m)\omega(-27\psi)^{-27m} \notag\\&=-\frac{1}{\q}-\frac{2}{\q}(-1)^{\frac{\q}{3}}q+\frac{1}{q\q}\sum_{m \notin \mathcal{B}}g(-9m)g(-6m)g(-7m)g(-5m)g(27m)\omega(-27\psi)^{-27m}\notag\\&=-\frac{1}{\q}-\frac{2q}{\q}+\frac{1}{q\q}\sum_{m \notin \mathcal{B}}g(-9m)g(-6m)g(-7m)g(-5m)g(27m)\omega(-27\psi)^{-27m}\notag.
\end{align}

Comparing \eqref{eq:torus} and \eqref{eq: hp2} we get \begin{align*}
\#U_{\psi}(\F_q)=q^2-3q+5+H_q(\bm{\alpha}_{\heartsuit_0},\bm{\beta}_{\heartsuit_0}|t_{\heartsuit_0}). 
\end{align*}
\item[(c)] If $q \equiv 1 \Mod{9},$ then $\mathcal{B}=\Big\{0,\tfrac{\q}{3},\tfrac{2\q}{3},\tfrac{\q}{9},\tfrac{2\q}{9},\tfrac{4\q}{9},\tfrac{5\q}{9},\tfrac{7\q}{9},\tfrac{8\q}{9}\Big\}$ and in an analogous way as done for the two previous items, one obtains that \begin{align}
\#U_{\psi}(\F_q)&=q^2-3q+5+H_q(\bm{\alpha}_{\heartsuit_0},\bm{\beta}_{\heartsuit_0}|t_{\heartsuit_0})\\&-\frac{1}{q}\left(g(\tfrac{2\q}{3})g(\tfrac{7\q}{9})g(\tfrac{5\q}{9})+g(\tfrac{\q}{3})g(\tfrac{5\q}{9})g(\tfrac{\q}{9})+g(\tfrac{2\q}{3})g(\tfrac{2\q}{9})g(\tfrac{\q}{9})\right)\notag\\&-\frac{1}{q}\left(g(\tfrac{\q}{3})g(\tfrac{7\q}{9})g(\tfrac{8\q}{9})+g(\tfrac{2\q}{3})g(\tfrac{4\q}{9})g(\tfrac{8\q}{9})+g(\tfrac{\q}{3})g(\tfrac{2\q}{9})g(\tfrac{4\q}{9})\right)\notag.    
\end{align}    

Recalling Remark \ref{rmk: notationJac}, we write the last expression in terms of Jacobi sums. By Lemma \ref{lemma: JacobiGauss}, for every $i \in (\Z/9\Z)^{\times},$ we have \begin{align*}
J_q(\tfrac{i}{9},\tfrac{2i}{9})=\tfrac{1}{q}g(\tfrac{i\q}{9})g(\tfrac{2i\q}{9})g(\tfrac{-i\q}{3}).    
\end{align*} For instance, if $i=1$ we get $J_q(\tfrac{1}{9},\tfrac{2}{9})=\tfrac{1}{q}g(\tfrac{\q}{9})g(\tfrac{2\q}{9})g(\tfrac{2\q}{3}).$ Doing the same for every element of $(\Z/9\Z)^{\times},$ we prove the result.  
\end{itemize} 
\end{proof}
Now that we have the point counts for the maximal torus of $C_4$, we treat the smaller tori. 
\begin{lemma}\label{lemma: lemmac4}
\begin{itemize}
    \item[(a)] If $q \not\equiv 1 \Mod{3},$ then $\#X_{\psi}(\F_q)-\#U_{\psi}(\F_q)=5q-2.$
    \item[(b)] If $q \equiv 1 \Mod{3}$ and $q \not\equiv 1 \Mod{9},$ then $\#X_{\psi}(\F_q)-\#U_{\psi}(\F_q)=7q-4.$
    \item[(c)] If $q \equiv 1 \Mod 9,$ then \begin{align*}
    \#X_{\psi}(\F_q)-\#U_{\psi}(\F_q)=7q-4+\sum_{i \in (\Z/9\Z)^{\times}}J_q(\tfrac{i}{9},\tfrac{2i}{9}).  
    \end{align*} 
\end{itemize}    
\end{lemma}

\begin{proof}
\begin{itemize}
\item[(a)] Suppose that $q \not \equiv 1 \Mod{3}.$ 

\bigskip 

\textbf{One coordinate is zero and all the others are non-zero} 

\bigskip 

The first case is $x_0=0.$ The equation becomes $x_1^3x_2+x_2^3x_3+x_3^4=0$ and using \eqref{sys:sys1} is associated to the system
\begin{align*}
\begin{pmatrix}
3 & 0 & 0 \\
1 & 3 & 0 \\
0 & 1 & 4 \\
1 & 1 & 1 
\end{pmatrix}\begin{pmatrix}
s_1 \\
s_2 \\
s_3
\end{pmatrix}\equiv 0 \Mod \q.
\end{align*} This system has a unique solution given by $(0,0,0)$ and therefore the number of points is $q-2.$ The other three cases differ from the first one, but are similar between them and contribute each with $q-1$ points. To illustrate, consider $x_1=0,$ then the equation becomes $x_2^3x_3+x_3^4=0$ and we have the following system \begin{align*}
\begin{pmatrix}
 0 & 0 \\
 3 & 0 \\
 1 & 4 \\
 1 & 1 
\end{pmatrix}\begin{pmatrix}
s_1 \\
s_2 
\end{pmatrix}\equiv 0 \Mod \q
\end{align*} which has $(0,0)$ as its unique solution and therefore the number of points is $q-1.$ 

\bigskip

\textbf{Two coordinates are zero and the other two are non-zero} 

\bigskip 

We only have solutions in two of the cases, namely $x_0=x_1=0$ and $x_1=x_3=0.$ In the first case, the equation becomes $x_2^3x_3+x_3^4=0$ and is associated to the matrix \begin{align*}
\begin{pmatrix}
 3 & 0 \\
 1 & 4 \\
 1 & 1 
\end{pmatrix}\begin{pmatrix}
s_1 \\
s_2 
\end{pmatrix}\equiv 0 \Mod{\q}.
\end{align*} The only solution for this system is $(0,0)$ and therefore the number of points is $c_{(0,0)}=1.$ In the second case, the equation is trivial and therefore every element $(a:b) \in \mathbb{P}^1 \setminus \{(0:1),(1:0)\}$ is a solution, so there are $q-1$ points.
\bigskip

\textbf{Three coordinates are zero and the other one is non-zero} 

\bigskip

\noindent We have three points satisfying this condition: $(0:0:1:0),(0:1:0:0) \text{ and }(1:0:0:0).$

\bigskip

We conclude that $$\#X_{\psi}(\F_q)-\#U_{\psi}(\F_q)=5q-2.$$

\bigskip 

\item[(b)] Suppose $q \equiv 1 \Mod{3}$ and $q \not \equiv 1 \Mod{9}.$ 

\bigskip

\textbf{One coordinate is zero and all the others are non-zero}

\bigskip

\noindent The first case is $x_0=0,$ which has equation $x_1^3x_2+x_2^3x_3+x_3^4=0.$ It is associated to the system \begin{align*}
\begin{pmatrix}
3 & 0 & 0 \\
1 & 3 & 0 \\
0 & 1 & 4 \\
1 & 1 & 1 
\end{pmatrix}\begin{pmatrix}
s_1 \\
s_2 \\
s_3
\end{pmatrix}\equiv 0 \Mod \q
\end{align*} which has solutions given by $\{(0,0,0),\tfrac{\q}{3}(0,1,2),\tfrac{\q}{3}(0,2,1)\}.$ The number of points is then given by $$c_{(0,0,0)}+c_{\tfrac{\q}{3}(0,1,2)}+c_{\tfrac{\q}{3}(0,2,1)}=q-4.$$ The second case is $x_1=0$ and the corresponding equation is $x_2^3x_3+x_3^4=0.$ So the system is \begin{align*}
\begin{pmatrix}
 0 & 0 \\
 3 & 0 \\
 1 & 4 \\
 1 & 1 
\end{pmatrix}\begin{pmatrix}
s_1 \\
s_2 
\end{pmatrix}\equiv 0 \Mod \q
\end{align*} and has solutions $S=\{(0,0),\tfrac{\q}{3}(1,2),\tfrac{\q}{3}(2,1)\}$ giving then that the number of points is $$c_{(0,0)}+c_{\tfrac{\q}{3}(1,2)}+c_{\tfrac{\q}{3}(2,1)}=3q-3.$$ The other two cases, namely $x_2=0$ and $x_3=0$ are similar to item (a) and give each $q-1$ points. 

\bigskip

\textbf{Two coordinates are zero and the other two are non-zero} 

\bigskip 

As in item (a), only two of the cases have solutions. The first one is $x_0=x_1=0.$ The equation becomes $x_2^3x_3+x_3^4=0$ and the system \begin{align*}
\begin{pmatrix}
 3 & 0 \\
 1 & 4 \\
 1 & 1 
\end{pmatrix}\begin{pmatrix}
s_1 \\
s_2 
\end{pmatrix}\equiv 0 \Mod \q
\end{align*} has solutions $\{(0,0),\tfrac{\q}{3}(1,2),\tfrac{\q}{3}(2,1)\}.$ Therefore, the number of points is $$c_{(0,0)}+c_{\tfrac{\q}{3}(1,2)}+c_{\tfrac{\q}{3}(2,1)}=3.$$ The second case that has solutions is $x_1=x_3=0$ which again has trivial equation and therefore yields $q-1$ points.

\bigskip

\textbf{Three coordinates are zero and the other one is non-zero.} 

\bigskip 

\noindent As in item (a), we have the same 3 points. 

\bigskip

Therefore, $$\#X_{\psi}(\F_q)-\#U_{\psi}(\F_q)=7q-4.$$

\bigskip

\item[(c)] Suppose that $q \equiv 1 \Mod{9}.$

\bigskip

\textbf{One coordinate is zero and all the others are non-zero} 

\bigskip 
    
\noindent The first case is $x_0=0.$ The equation becomes $x_1^3x_2+x_2^3x_3+x_3^4=0$ and is associated with the system \begin{align*}
\begin{pmatrix}
3 & 0 & 0 \\
1 & 3 & 0 \\
0 & 1 & 4 \\
1 & 1 & 1 
\end{pmatrix}\begin{pmatrix}
s_1 \\
s_2 \\
s_3
\end{pmatrix}\equiv 0 \Mod{\q}.
\end{align*} whose solutions are \begin{align*}
S=\{(0,0,0),\tfrac{\q}{3}(0,1,2),\tfrac{\q}{3}(0,2,1),\tfrac{\q}{9}(3,5,1),\tfrac{\q}{9}(6,1,2),\tfrac{\q}{9}(3,2,4),\tfrac{\q}{9}(6,7,5),\tfrac{\q}{9}(3,8,7),\tfrac{\q}{9}(6,4,8)\}    
\end{align*} and the the number of points is \begin{align*}
&q-4+\frac{1}{q}\left(g(\tfrac{2\q}{3})g(\tfrac{7\q}{9})g(\tfrac{5\q}{9})+g(\tfrac{\q}{3})g(\tfrac{5\q}{9})g(\tfrac{\q}{9})+g(\tfrac{2\q}{3})g(\tfrac{2\q}{9})g(\tfrac{\q}{9})\right)\notag\\&+\frac{1}{q}\left(g(\tfrac{\q}{3})g(\tfrac{7\q}{9})g(\tfrac{8\q}{9})+g(\tfrac{2\q}{3})g(\tfrac{4\q}{9})g(\tfrac{8\q}{9})+g(\tfrac{\q}{3})g(\tfrac{2\q}{9})g(\tfrac{4\q}{9})\right).    
\end{align*} The second case is $x_1=0$ and has equation $x_2^3x_3+x_3^4=0.$ The associate system is \begin{align*}
\begin{pmatrix}
 0 & 0 \\
 3 & 0 \\
 1 & 4 \\
 1 & 1 
\end{pmatrix}\begin{pmatrix}
s_1 \\
s_2 
\end{pmatrix}\equiv 0 \Mod{\q},
\end{align*} with solutions $S=\{(0,0),\tfrac{\q}{3}(1,2),\tfrac{\q}{3}(2,1)\}.$ Thus, the number of points is $c_{(0,0)}+c_{\tfrac{\q}{3}(1,2)}+c_{\tfrac{\q}{3}(2,1)}=3q-3.$ 

The last two cases are similar and give the same number of points. We consider only one of the cases, namely $x_2=0.$ The equation is $x_0^3x_1+x_3^4=0$ and has associated system of the form \begin{align*}
\begin{pmatrix}
 3 & 0 \\
 1 & 0 \\
 0 & 4 \\
 1 & 1 
\end{pmatrix}\begin{pmatrix}
s_1 \\
s_2 
\end{pmatrix}\equiv 0 \Mod{\q},
\end{align*} and only $(0,0)$ as solution and therefore the number of points is $q-1.$ 

\bigskip

\textbf{Two coordinates are zero and the other two are non-zero} 

\bigskip 

Only two cases have solutions. The first one is $x_0=x_1=0$ whose equation has the form $x_2^3x_3+x_3^4=0$ and the associated system is given by \begin{align*}
\begin{pmatrix}
 3 & 0 \\
 1 & 4 \\
 1 & 1 
\end{pmatrix}\begin{pmatrix}
s_1 \\
s_2 
\end{pmatrix}\equiv 0 \Mod \q
\end{align*} Its solutions are $S=\{(0,0),\tfrac{\q}{3}(1,2),\tfrac{\q}{3}(2,1)\}$ and the number of points is $c_{(0,0)}+c_{\tfrac{\q}{3}(1,2)}+c_{\tfrac{\q}{3}(2,1)}=3.$ The second case is $x_1=x_3=0,$ whose equation is trivial and therefore every $(a:b) \in \mathbb{P}^1 \setminus \{(0:1),(1:0)\}$ is a solution. Thus, there are $q-1$ points.

\bigskip

\textbf{Three coordinates are zero and the other one is non-zero} 

\bigskip

\noindent 
There are three points satisfying this condition, namely $(0:0:1:0),(0:1:0:0) \text{ and } (1:0:0:0).$

\bigskip 

Therefore, we obtain the formula 
\begin{align*}
    \#X_{\psi}(\F_q)-\#U_{\psi}(\F_q)=7q-4&+\frac{1}{q}\left(g(\tfrac{2\q}{3})g(\tfrac{7\q}{9})g(\tfrac{5\q}{9})+g(\tfrac{\q}{3})g(\tfrac{5\q}{9})g(\tfrac{\q}{9})+g(\tfrac{2\q}{3})g(\tfrac{2\q}{9})g(\tfrac{\q}{9})\right)\notag\\&+\frac{1}{q}\left(g(\tfrac{\q}{3})g(\tfrac{7\q}{9})g(\tfrac{8\q}{9})+g(\tfrac{2\q}{3})g(\tfrac{4\q}{9})g(\tfrac{8\q}{9})+g(\tfrac{\q}{3})g(\tfrac{2\q}{9})g(\tfrac{4\q}{9})\right)\\&=7q-4+\sum_{i \in (\Z/9\Z)^{\times}}J_q(\tfrac{i}{9},\tfrac{2i}{9}),  
\end{align*} where the conversion between products of Gauss sums and Jacobi sums is obtained exactly as in Lemma \ref{lemma: lemmac41}.
\end{itemize}
\end{proof}

For a congruence relation $a \equiv b \Mod{c},$ we denote $$\delta[a\equiv b \Mod c]=\begin{cases}
1, \text{ if } a\equiv b \Mod c\\
0, \text{ otherwise}
\end{cases}.$$ With this notation and combining Lemma \ref{lemma: lemmac41} and Lemma \ref{lemma: lemmac4}, we prove the following result

\begin{prop}\label{prop: propc4} The pair $\bm{\alpha}_{\heartsuit_0},\bm{\beta}_{\heartsuit_0}$ is defined over $\Q$ and
$$\#X_{\psi}(\F_q)=q^2+2q+1+2q\delta[q\equiv 1 \Mod 3]+H_q(\bm{\alpha}_{\heartsuit_0},\bm{\beta}_{\heartsuit_0} \mid t_{\heartsuit_0}).$$ 
\end{prop}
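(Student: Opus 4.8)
The plan is to obtain the formula by adding the torus count of Lemma~\ref{lemma: lemmac41} to the boundary count of Lemma~\ref{lemma: lemmac4}, case by case over the three congruence classes of $q$ that already occur in those lemmas, and to verify the ``defined over $\Q$'' claim separately and directly from the parameters computed in \S\ref{ex:ASC4}.

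First I would check that $\bm{\alpha}_{\varheartsuit_0},\bm{\beta}_{\varheartsuit_0}$ is defined over $\Q$ in the sense of Definition~\ref{def: def}; this also legitimizes the appearance of $H_q(\bm{\alpha}_{\varheartsuit_0},\bm{\beta}_{\varheartsuit_0}\mid t_{\varheartsuit_0})$ through Definition~\ref{def: def2}. Reading off the explicit lists in \S\ref{ex:ASC4}, the eighteen entries of $\bm{\alpha}_{\varheartsuit_0}$ are precisely the fractions $j/27$ with $\gcd(j,27)=1$, so $\prod_{\alpha\in\bm{\alpha}_{\varheartsuit_0}}(x-e^{2\pi i\alpha})=\Phi_{27}(x)=x^{18}+x^{9}+1\in\Z[x]$; likewise the entries of $\bm{\beta}_{\varheartsuit_0}$ partition into the primitive $m$th roots of unity for $m\in\{1,2,3,5,6,7\}$, with $m=1$ occurring with multiplicity three, so $\prod_{\beta\in\bm{\beta}_{\varheartsuit_0}}(x-e^{2\pi i\beta})=\Phi_1(x)^{3}\Phi_2(x)\Phi_3(x)\Phi_5(x)\Phi_6(x)\Phi_7(x)\in\Z[x]$. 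Both polynomials have rational coefficients, hence $K_{\bm{\alpha}_{\varheartsuit_0},\bm{\beta}_{\varheartsuit_0}}=\Q$. (This is also visible from the ratio $\tfrac{x^{27}-1}{(x^{9}-1)(x^{6}-1)(x^{7}-1)(x^{5}-1)}$ appearing in the proof of Lemma~\ref{lemma: lemmac41}.)

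Then I would write $\#X_{\psi}(\F_q)=\#U_{\psi}(\F_q)+\bigl(\#X_{\psi}(\F_q)-\#U_{\psi}(\F_q)\bigr)$ and combine the two lemmas in each of the three cases. If $q\not\equiv 1\Mod 3$, parts~(a) give $(q^{2}-3q+3)+(5q-2)+H_q(\bm{\alpha}_{\varheartsuit_0},\bm{\beta}_{\varheartsuit_0}\mid t_{\varheartsuit_0})=q^{2}+2q+1+H_q(\bm{\alpha}_{\varheartsuit_0},\bm{\beta}_{\varheartsuit_0}\mid t_{\varheartsuit_0})$, and here $\delta[q\equiv1\Mod3]=0$. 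If $q\equiv1\Mod3$ but $q\not\equiv1\Mod9$, parts~(b) give $(q^{2}-3q+5)+(7q-4)=q^{2}+4q+1=q^{2}+2q+1+2q$. If $q\equiv1\Mod9$ (hence $q\equiv1\Mod3$), parts~(c) each carry the same sum of six Gauss-sum products, with coefficient $-\tfrac1q$ in Lemma~\ref{lemma: lemmac41}(c) and $+\tfrac1q$ in Lemma~\ref{lemma: lemmac4}(c); these cancel, and the surviving terms again sum to $q^{2}+4q+1+H_q(\bm{\alpha}_{\varheartsuit_0},\bm{\beta}_{\varheartsuit_0}\mid t_{\varheartsuit_0})$. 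In every case this equals $q^{2}+2q+1+2q\delta[q\equiv1\Mod3]+H_q(\bm{\alpha}_{\varheartsuit_0},\bm{\beta}_{\varheartsuit_0}\mid t_{\varheartsuit_0})$, as claimed.

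The computation is routine once the two lemmas are available, so I do not expect a genuine obstacle; the one point worth flagging is the cancellation in the case $q\equiv 1\Mod 9$. The Gauss-sum corrections there are not of hypergeometric type, yet they enter the torus count and the boundary count with opposite signs and so drop out of the count for the smooth projective surface, leaving only the holomorphic hypergeometric contribution alongside elementary polynomial-in-$q$ terms. This is consistent with Table~\ref{table: kloost}, where $\Csf_4$ has $\dim W_{\psi}=0$: beyond the $18$-dimensional hypergeometric piece the primitive cohomology consists only of the algebraic part $C$, so nothing outside $H_q(\bm{\alpha}_{\varheartsuit_0},\bm{\beta}_{\varheartsuit_0}\mid t_{\varheartsuit_0})$ is needed to absorb it.
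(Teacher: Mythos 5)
Your proposal is correct and is essentially the paper's own argument: the paper proves Proposition~\ref{prop: propc4} precisely by combining Lemma~\ref{lemma: lemmac41} and Lemma~\ref{lemma: lemmac4} case by case, with the $q\equiv 1\Mod 9$ Gauss-sum corrections cancelling between the torus and boundary counts exactly as you describe, and the rationality of the pair $\bm{\alpha}_{\varheartsuit_0},\bm{\beta}_{\varheartsuit_0}$ is read off from the same cyclotomic quotient $\tfrac{x^{27}-1}{(x^{9}-1)(x^{6}-1)(x^{7}-1)(x^{5}-1)}$ used in the proof of Lemma~\ref{lemma: lemmac41}. Your arithmetic in all three congruence cases checks out.
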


\subsubsection[]{The family $\Csf_2\Fsf_2$} 
Recall that $\Csf_2\Fsf_2$ is defined as the one-parameter deformation of the invertible polynomial $x_0^3x_1+x_1^4+x_2^4+x_3^4.$ Following \textit{Step 1} and \textit{2} described in Section~\ref{subs: charac}, we show that the set of solutions of \eqref{sys:sys1} decomposes as follows.

\bigskip 

If $q \equiv 1 \Mod 4,$ then we have a disjoint union $S=S_0 \cup S_1 \cup S_2 \cup S_3,$ where 

\bigskip
    
\begin{itemize}
    \item[(i)] $S_0=\Bigg\{(4,2,3,3,-12)l:l \in \Z/\q\Z\Bigg\}$
    \item[(ii)] $S_1=\Bigg\{(4,2,3,3,-12)l+\dfrac{\q}{2}(0,1,0,1,0):l \in \Z/\q\Z\Bigg\}$
    \item[(iii)] $S_2=\Bigg\{(4,2,3,3,-12)l+\dfrac{\q}{4}(0,3,0,1,0):l \in \Z/\q\Z\Bigg\}$
    \item[(iv)] $S_3=\Bigg\{(4,2,3,3,-12)l+\dfrac{\q}{4}(0,1,0,3,0): l \in \Z/\q\Z\Bigg\}$
\end{itemize}  

\bigskip 

If $q \equiv 3 \Mod 4,$ then we have a disjoint union $S=S_0 \cup S_1,$ where

\bigskip 

\begin{itemize}
    \item[(i)] $S_0=\Bigg\{(4,2,3,3,-12)l:l \in \Z/\q\Z\Bigg\}$
    \item[(ii)] $S_1=\Bigg\{(4,2,3,3,-12)l+\dfrac{\q}{2}(0,1,0,1,0):l \in \Z/\q\Z\Bigg\}$
\end{itemize}

\bigskip 

Here \textit{Step 3} (see \ref{S: step}) consists of Lemmas \ref{lm: thelemma}, \ref{lemma: lemma2}, \ref{lemma:club2}, and \ref{lemma:club3} whereas Lemmas \ref{lem: junk1} and \ref{lemma:char} are used in the proof of Lemma \ref{lemma: lemma2}.

\begin{lemma}\label{lm: thelemma} The pair $\bm{\alpha}_{\clubsuit_0},\bm{\beta}_{\clubsuit_0}$ is defined over $\Q$ and 
\begin{itemize}
    \item [\textbf{(a)}] If $q \not \equiv 1 \Mod{3}$ and $q \not \equiv 1 \Mod{4},$ then $$\sum_{s \in S_0}\omega(a)^{-s}c_s=q^2-3q+2+H_q(\bm{\alpha}_{\clubsuit_0},\bm{\beta}_{\clubsuit_0}\mid t_{\clubsuit_0}).$$
    \item [\textbf{(b)}] If $q \not \equiv 1 \Mod{3}$ and $q \equiv 1 \Mod{4},$ then $$\sum_{s \in S_0}\omega(a)^{-s}c_s=q^2-3q+4+H_q(\bm{\alpha}_{\clubsuit_0},\bm{\beta}_{\clubsuit_0}\mid t_{\clubsuit_0})-\left(J_q\left(\tfrac{1}{4},\tfrac{1}{4}\right)+J_q\left(\tfrac{3}{4},\tfrac{3}{4}\right)\right).$$
    \item [\textbf{(c)}] If $q \equiv 1 \Mod{3}$ and $q \not\equiv 1 \Mod{4},$ then $$\sum_{s \in S_0}\omega(a)^{-s}c_s=q^2-3q+4+H_q\left(\bm{\alpha}_{\clubsuit_0},\bm{\beta}_{\clubsuit_0}\mid t_{\clubsuit_0}\right)$$
    \item[\textbf{(d)}] If $q \equiv 1 \Mod{3}$ and $q \equiv 1 \Mod{4},$ then $$\sum_{s \in S_0}\omega(a)^{-s}c_s=q^2-3q+6+H_q\left(\bm{\alpha}_{\clubsuit_0},\bm{\beta}_{\clubsuit_0}\mid t_{\clubsuit_0}\right)-\left(J_q\left(\tfrac{1}{4},\tfrac{1}{4}\right)+J_q\left(\tfrac{3}{4},\tfrac{3}{4}\right)\right).$$
\end{itemize} 
\end{lemma}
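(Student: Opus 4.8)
The plan is to mirror the proof of Lemma~\ref{lemma: lemmac41}: expand $\sum_{s \in S_0}\omega(a)^{-s}c_s$ via Koblitz's formula into a sum of Gauss-sum products indexed by $l\in\Z/\q\Z$, expand $H_q(\bm{\alpha}_{\clubsuit_0},\bm{\beta}_{\clubsuit_0}\mid t_{\clubsuit_0})$ from Definition~\ref{def: def2} into a sum over the same index, and compare the two term by term.

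First I would run Koblitz's formula. For $s=(4,2,3,3,-12)l$ only the deformation monomial carries a nontrivial coefficient, so $\omega(a)^{-s}=\omega(-12\psi)^{12l}$, and by~\eqref{c:c} one has $c_0=(q^3-4q^2+6q-4)/\q$ and $c_{(4,2,3,3,-12)l}=\tfrac{1}{q\q}g(4l)g(2l)g(3l)^2g(-12l)$ for $l\neq 0$; hence Theorem~\ref{thm:Kob} yields
\[
\sum_{s \in S_0}\omega(a)^{-s}c_s=\frac{q^3-4q^2+6q-4}{\q}+\frac{1}{q\q}\sum_{l=1}^{q-2}g(4l)g(2l)g(3l)^2g(-12l)\,\omega(-12\psi)^{12l}.
\]
On the hypergeometric side, the parameters give $F(x)=\Phi_{12}(x)\Phi_6(x)$ and $G(x)=\Phi_1(x)^3\Phi_2(x)\Phi_3(x)$, both with integer coefficients, so $K_{\bm{\alpha}_{\clubsuit_0},\bm{\beta}_{\clubsuit_0}}=\Q$; rewriting $F/G$ as a ratio of factors $x^n-1$ gives $\tfrac{F}{G}=\tfrac{x^{12}-1}{(x^4-1)(x^3-1)^2(x^2-1)}$, hence $D(x)=(x^4-1)(x^2+x+1)$, $M=2^{14}3^6$ and $\epsilon=1$. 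Substituting $m\mapsto -l$ in Definition~\ref{def: def2} and using $\epsilon M^{-1}t_{\clubsuit_0}=2^{-24}3^{-12}\psi^{-12}$ and $(-12)^{12}=2^{24}3^{12}$ to identify $\omega(\epsilon M^{-1}t_{\clubsuit_0})^{-l}=\omega(-12\psi)^{12l}$, I obtain
\[
H_q(\bm{\alpha}_{\clubsuit_0},\bm{\beta}_{\clubsuit_0}\mid t_{\clubsuit_0})=\frac{1}{\q}\sum_{l=0}^{q-2}q^{\,s(-l)-1}\,g(-12l)g(4l)g(3l)^2g(2l)\,\omega(-12\psi)^{12l},
\]
where $s(\cdot)$ is the multiplicity function of $D(x)$, so $s(0)=1$.

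The comparison then proceeds term by term. For each $l$ with $s(-l)=0$ the $l$-th summand of $H_q$ equals $\tfrac{1}{q\q}g(4l)g(2l)g(3l)^2g(-12l)\,\omega(-12\psi)^{12l}$, exactly the $l$-th summand of the Koblitz sum, so these cancel in the difference. The surviving indices are those for which $e^{-2\pi i l/\q}$ is a fourth or a cube root of unity: $l=0$ and $l=\tfrac{\q}{2}$ always occur, $l\in\{\tfrac{\q}{3},\tfrac{2\q}{3}\}$ occurs exactly when $q\equiv 1\Mod 3$, and $l\in\{\tfrac{\q}{4},\tfrac{3\q}{4}\}$ occurs exactly when $q\equiv 1\Mod 4$ --- this is what produces the four cases. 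For each such $l$ I would evaluate the Gauss-sum product and the matching $c_s$ using Lemma~\ref{hd}: an argument $\equiv 0\Mod\q$ contributes $g(0)=-1$, an argument $\equiv\tfrac{\q}{2}$ contributes a Gauss sum whose square is $(-1)^{\q/2}q$, and $g(\tfrac{\q}{3})g(\tfrac{2\q}{3})=(-1)^{\q/3}q$. Tracking the parity-dependent signs --- $(-1)^{\q/2}=1$ or $-1$ according as $q\equiv 1$ or $3\Mod 4$, and $(-1)^{\q/3}=1$ whenever $q\equiv 1\Mod 3$ --- the index $l=\tfrac{\q}{2}$ contributes $\mp q$, the indices $\tfrac{\q}{3},\tfrac{2\q}{3}$ contribute $-q$ apiece, and the pair $\tfrac{\q}{4},\tfrac{3\q}{4}$ contributes $\tfrac{1}{q}g(\tfrac{\q}{2})\big(g(\tfrac{\q}{4})^2+g(-\tfrac{\q}{4})^2\big)$ to the Koblitz sum but $\tfrac{1}{\q}g(\tfrac{\q}{2})\big(g(\tfrac{\q}{4})^2+g(-\tfrac{\q}{4})^2\big)$ to $H_q$.

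Assembling these contributions, the difference $\sum_{s\in S_0}\omega(a)^{-s}c_s-H_q(\bm{\alpha}_{\clubsuit_0},\bm{\beta}_{\clubsuit_0}\mid t_{\clubsuit_0})$ equals $\tfrac1\q$ times a polynomial in $q$ --- divisible by $\q$, and equal to $\q(q^2-3q+2)$, $\q(q^2-3q+4)$, $\q(q^2-3q+4)$, $\q(q^2-3q+6)$ in cases (a)--(d) respectively --- together with the uncancelled Gauss-sum contribution $-\tfrac1q g(\tfrac{\q}{2})\big(g(\tfrac{\q}{4})^2+g(-\tfrac{\q}{4})^2\big)$ in cases (b) and (d). The main obstacle is precisely this exceptional-term bookkeeping: pinning down the $(-1)^{\q/2}$ and $(-1)^{\q/3}$ signs, checking that the polynomial part always factors through $\q$, and observing that the $\tfrac{\q}{4},\tfrac{3\q}{4}$ contributions cannot collapse to a power of $q$ --- the argument $\tfrac{3\q}{4}\equiv -\tfrac{\q}{4}$ enters squared through the $g(3l)^2$ factor, leaving $g(-\tfrac{\q}{4})^2$ rather than $g(\tfrac{\q}{4})g(-\tfrac{\q}{4})$ --- which is exactly the residual term recorded in the statement. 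Everything else is the routine passage between the Koblitz and Beukers--Cohen--Mellit normalizations already carried out for $\Csf_4$, and I would finish by verifying each of the four formulas numerically for small prime powers $q$.
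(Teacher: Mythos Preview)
Your proposal is correct and follows essentially the same approach as the paper's own proof: both expand the Koblitz sum and the Beukers--Cohen--Mellit hypergeometric sum over the same index set, identify the exceptional indices from the roots of $D(x)=(x^4-1)(x^2+x+1)$, and compare term by term across the four divisibility cases. Your reindexing $m\mapsto -l$ is a cosmetic choice that makes the two sums align more visibly, but the computation of the special Gauss-sum values at $l\in\{0,\tfrac{\q}{2},\tfrac{\q}{3},\tfrac{2\q}{3},\tfrac{\q}{4},\tfrac{3\q}{4}\}$ and the resulting polynomial-in-$q$ and residual Gauss-sum terms are identical to what the paper does.
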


\begin{proof}
Let $q_1=4, q_2=2, q_3=q_4=3$ and $p_1=12.$ We have $$\dfrac{x^{12}-1}{(x^4-1)(x^2-1)(x^3-1)^2}=\dfrac{(x-1)^3(x+1)(x-e^{\frac{2\pi i}{3}})(x-e^{\frac{4\pi i}{3}})}{(x-e^{\frac{\pi i}{6}})(x-e^{\frac{\pi i}{3}})(x-e^{\frac{5\pi i}{6}})(x-e^{\frac{7\pi i}{6}})(x-e^{\frac{5\pi i}{3}})(x-e^{\frac{11\pi i}{6}})}$$ and the gcd between numerator and denominator is $$D(x)=(x^4-1)(x-e^{\frac{2\pi i}{3}})(x-e^{\frac{4\pi i}{3}})=(x-1)(x-e^{\frac{\pi i}{2}})(x-e^{\pi i})(x-e^{\frac{3\pi i}{2}})(x-e^{\frac{2\pi i}{3}})(x-e^{\frac{4\pi i}{3}}).$$ Moreover, we have $K_{\bm{\alpha_2},\bm{\beta_2}}=\Q,$ which means $\bm{\alpha_2},\bm{\beta_2}$ are defined over $\Q.$ 

Notice that if $m \in \Z/\q\Z,$ then $s(m)>0$ if and only if $e^{2\pi i \frac{m}{\q}}$ is a root of $D(x)$ and since $D(x)$ is separable over $\C$, $s(m)=1$ in the affirmative case. We separate in 4 cases depending on whether or not $4$ and $3$ divide $\q$ as the numbers $s(m)$ will depend on these conditions. For each of the cases, for $m \in \{0,\dots,q-2\}$ $$s(m)=\begin{cases}
1, \text{ if } m \in \mathcal{B} \\
0, \text{ otherwise}
\end{cases}.$$ 

We also have \begin{align}\label{sum: sum}\sum_{s \in S_0}\omega(a)^{-s}c_s&=\sum_{m=0}^{q-2}\omega(-12\psi)^{-12m}c_s, \text{ where } \\ 
c_{(0,0,0,0,0)}&=\dfrac{q^3-4q^2+6q-4}{\q} \\
c_{(4,2,3,3,-12)m}&=\dfrac{1}{q\q}g(-4m)g(-2m)g(-3m)^2g(12m), \text{ for all }m\neq 0.
\end{align}

We compute below all the special values of $c_s$ as they will be used along the proof. For $m=\frac{\q}{2},$ we get $$c_{(\frac{\q}{2})(0,0,1,1,0)}=\dfrac{-1}{q\q}g(\tfrac{\q}{2})^2=\frac{(-1)^{\frac{\q}{2}+1}}{\q}.$$ 
For $m=\frac{\q}{3},\frac{2\q}{3},$ we have $$c_{(\frac{\q}{3})(1,-1,0,0,0)}=c_{(\frac{\q}{3})(-1,1,0,0,0)}=\dfrac{-1}{q\q}g(\tfrac{\q}{3})g(-\tfrac{\q}{3})=\frac{(-1)^{\frac{\q}{3}+1}}{\q}.$$
And finally, for $m=\frac{\q}{4},\frac{3\q}{4},$ we have $$c_{(\frac{\q}{4})(0,2,-1,-1,0)}=c_{(\frac{\q}{4})(0,2,1,1,0)}=\frac{1}{q\q}g(\tfrac{\q}{2})g(-\tfrac{\q}{4})^2=\frac{(-1)^{\frac{\q}{2}}}{\q}J_q\left(\tfrac{3}{4},\tfrac{3}{4}\right)$$

\begin{itemize}
    \item[\textbf{(a)}] Let $\mathcal{B}=\{0,\frac{\q}{2}\}.$  We have 
\begin{align}\label{eq: eq19}
H_q(\bm{\alpha}_{\clubsuit_0},\bm{\beta}_{\clubsuit_0}\mid t_{\clubsuit_0})&=\dfrac{1}{\q}\sum_{m=0}^{q-2}q^{-1+s(m)}g(-4m)g(-2m)g(-3m)^2g(12m)\omega(-12\psi)^{-12m}\\
    &=1+\dfrac{1}{q\q}\sum_{m \notin \mathcal{B}}g(-4m)g(-2m)g(-3m)^2g(12m)\omega(-12\psi)^{-12m}\notag\end{align} 
\noindent We use the special values of $c_s$ for $m \in \mathcal{B}$ to pull the corresponding terms out of the sum in \eqref{sum: sum}. We also recall that $\frac{\q}{2}$ is odd, so \begin{align*}
    \sum_{s \in S_0}\omega(a)^{-s}c_s&=q^2-3q+3+\dfrac{1}{q\q}\sum_{m \notin \mathcal{B}}g(-4m)g(-2m)g(-3m)^2g(12m)\omega(-12\psi)^{-12m}.
\end{align*}
Isolating the sum in equation \eqref{eq: eq19} and substituting in the equation above we obtain the final result

$$\sum_{s \in S_0}\omega(a)^{-s}c_s=q^2-3q+2+H_q(\bm{\alpha}_{\clubsuit_0},\bm{\beta}_{\clubsuit_0}\mid t_{\clubsuit_0}).$$

\item[\textbf{(b)}] Let $\mathcal{B}=\{0,\frac{\q}{2},\frac{\q}{4},\frac{3\q}{4}\}.$ We have \begin{align}\label{eq: 20}
   H_q(\bm{\alpha}_{\clubsuit_0},\bm{\beta}_{\clubsuit_0}\mid t_{\clubsuit_0})&=\frac{1}{\q}(-1-q+g(\tfrac{\q}{2})(g(\tfrac{\q}{4})^2+g(-\tfrac{\q}{4})^2))\notag\\&+\dfrac{1}{q\q}\sum_{m \notin \mathcal{B}}g(-4m)g(-2m)g(-3m)^2g(12m)\omega(-12\psi)^{-12m}.
\end{align}

We use the special values of $c_s$ for $m \in \mathcal{B}$ to pull the corresponding terms out of the sum in \eqref{sum: sum}. We also recall that $\frac{\q}{2}$ is even, then \begin{align*}
    \sum_{s \in S_0}\omega(a)^{-s}c_s&=\frac{q^3-4q^2+6q-5}{\q}+\frac{g(\tfrac{\q}{2})}{q\q}(g(\tfrac{\q}{4})^2+g(-\tfrac{\q}{4})^2)\\&+\sum_{m \notin \mathcal{B}}g(-4m)g(-2m)g(-3m)^2g(12m)\omega(-12\psi)^{-12m}
\end{align*}

Isolating the sum in equation \eqref{eq: 20} and plugging into the equation above, we obtain $$\sum_{s \in S_0}\omega(a)^{-s}c_s=q^2-3q+4+H_q(\bm{\alpha}_{\clubsuit_0},\bm{\beta}_{\clubsuit_0}\mid t_{\clubsuit_0})-\tfrac{1}{q}g(\tfrac{\q}{2})(g(\tfrac{\q}{4})^2+g(-\tfrac{\q}{4})^2).$$

By Lemma \ref{lemma: JacobiGauss}, we have $$J_q\left(\tfrac{1}{4},\tfrac{1}{4}\right)+J_q\left(\tfrac{3}{4},\tfrac{3}{4}\right)=\tfrac{1}{q}g(\tfrac{\q}{2})(g(\tfrac{\q}{4})^2+g(-\tfrac{\q}{4})^2)$$ and this concludes the proof of \textbf{(b)}.

\item[\textbf{(c)}] Let $\mathcal{B}=\Big\{0,\frac{\q}{2},\frac{\q}{3},\frac{2\q}{3}\Big\}.$ We have \begin{align}\label{eq: 23}
H_q(\bm{\alpha}_{\clubsuit_0},\bm{\beta}_{\clubsuit_0}\mid t_{\clubsuit_0})=-\frac{q+1}{\q}+\frac{1}{q\q}\sum_{m \notin \mathcal{B}}g(-4m)g(-2m)g(-3m)^2g(12m)\omega(-12\psi)^{-12m}
\end{align}

We use the special values of $c_s$ for $m \in \mathcal{B}$ to pull the corresponding terms out of the sum in \eqref{sum: sum}. We also recall that $\frac{\q}{3}$ is even and $\frac{\q}{2}$ is odd, so

\begin{equation}
 \label{eq: eq25}
\sum_{s \in S_0}\omega(a)^{-s}c_s=\dfrac{q^3-4q^2+6q-5}{\q}+\dfrac{1}{q\q}\sum_{m \notin \mathcal{B}}g(-4m)g(-2m)g(-3m)^2g(12m)\omega(-12\psi)^{-12m}   
\end{equation}

Isolating the sum in equation \eqref{eq: 23} and substituting in equation \eqref{eq: eq25}, we obtain \begin{align}
\sum_{s \in S_0}\omega(a)^{-s}c_s&=q^2-3q+4+H_q(\bm{\alpha}_{\clubsuit_0},\bm{\beta}_{\clubsuit_0}\mid t_{\clubsuit_0}).
\end{align}

\item[\textbf{(d)}] Let $\mathcal{B}=\Big\{0,\frac{\q}{2},\frac{\q}{4},\frac{3\q}{4},\frac{\q}{3},\frac{2\q}{3}\Big\}.$ We have 
\begin{align}\label{eq: eq20}
   & H_q(\bm{\alpha}_{\clubsuit_0},\bm{\beta}_{\clubsuit_0}\mid t_{\clubsuit_0})=\dfrac{1}{\q}\sum_{m=0}^{q-2}q^{-1+s(m)}g(-4m)g(-2m)g(-3m)^2g(12m)\omega(-12\psi)^{-12m}\notag\\
    &=\frac{1}{\q}\left(-1-3q+g(\tfrac{\q}{2})(g(\tfrac{\q}{4})^2+g(-\tfrac{\q}{4})^2)+\frac{1}{q}\sum_{m \notin \mathcal{B}}g(-4m)g(-2m)g(-3m)^2g(12m)\omega(-12\psi)^{-12m}\right)
\end{align}
    
We use the special values of $c_s$ for $m \in \mathcal{B}$ to pull the corresponding terms out of the sum in \eqref{sum: sum}. Also recall that $\frac{\q}{2}$ and $\frac{\q}{3}$ are both even, so \begin{align}\label{eq: 21}
    \sum_{s \in S_0}\omega(a)^{-s}c_s&=\frac{q^3-4q^2+6q-7}{\q}+\frac{g(\frac{\q}{2})(g(\frac{\q}{4})^2+g(-\frac{\q}{4})^2)}{q\q}\notag\\&+\frac{1}{q\q}\sum_{m \notin \mathcal{B}}g(-4m)g(-2m)g(-3m)^2g(12m)\omega(-12\psi)^{-12m}.
\end{align}

We isolate the sum in \eqref{eq: 20}, plug into equation \eqref{eq: 21} and convert between Gauss and Jacobi sums (just as in the proof of \textbf{(b)}) to obtain the stated result \begin{align*}
    \sum_{s \in S_0}\omega(a)^{-s}c_s&=q^2-3q+6-\left(J_q\left(\tfrac{1}{4},\tfrac{1}{4}\right)+J_q\left(\tfrac{3}{4},\tfrac{3}{4}\right)\right)+H_q(\bm{\alpha}_{\clubsuit_0},\bm{\beta}_{\clubsuit_0}\mid t_{\clubsuit_0}).
\end{align*}
\end{itemize}
\end{proof}

\begin{lemma}\label{lem: junk1}
If $q \equiv 1 \Mod 3,$ then \begin{align}
g(\tfrac{\q}{3})^3\omega(2)^{-\frac{\q}{3}}&=(-1)^{\frac{\q}{2}}g(\tfrac{\q}{2})g(\tfrac{\q}{6})g(\tfrac{\q}{3})=qJ_q\left(\tfrac{1}{3},\tfrac{1}{2}\right)\label{alg: first}\\
g(\tfrac{2\q}{3})^3\omega(2)^{-\frac{2\q}{3}}&=(-1)^{\frac{\q}{2}}g(\tfrac{\q}{2})g(\tfrac{5\q}{6})g(\tfrac{2\q}{3})=qJ_q\left(\tfrac{2}{3},\tfrac{1}{2}\right)\label{alg: second}
\end{align}    
\end{lemma}
\begin{proof}
We will show only \eqref{alg: first}, since \eqref{alg: second} is completely analogous. Take $N=2$ and $m=\frac{\q}{6}$ in the Hasse-Davenport relation: $$g(\tfrac{\q}{2})g(\tfrac{\q}{3})\omega(2)^{-\frac{\q}{3}}=g(\tfrac{\q}{6})g(\tfrac{2\q}{3}).$$

Multiply both sides by $g(\frac{\q}{2})$ and get $$(-1)^{\frac{\q}{2}}qg(\tfrac{\q}{3})\omega(2)^{-\frac{\q}{3}}=g(\tfrac{\q}{2})g(\tfrac{\q}{6})g(\tfrac{2\q}{3}).$$ 

Now multiply both sides by $g(\tfrac{\q}{3})^2$ to obtain the first equality in \eqref{alg: first}. For the second equality, use Lemma \ref{lemma: JacobiGauss}. 
\end{proof}

\begin{lemma} \label{lemma:char}
\begin{itemize}
    \item[(a)] If $q \equiv 1 \Mod 4,$ then $\omega(2)^{\frac{\q}{2}}=(-1)^{\frac{\q}{4}}.$
    \item[(b)] If $q \equiv 1 \Mod 8,$ then $\omega(2)^{\frac{\q}{4}}g(\tfrac{3\q}{4})^2g(\tfrac{\q}{2})=g(\tfrac{3\q}{4})g(\tfrac{3\q}{8})g(\tfrac{7\q}{8})=qJ_q(\tfrac{3}{4}, \tfrac{3}{4}).$
\end{itemize}   
\end{lemma}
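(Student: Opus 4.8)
The plan is to handle the two parts independently: part (a) reduces to a statement about quadratic residues in $\F_q$, and part (b) falls out of the Hasse--Davenport relation (Lemma~\ref{hd}(c)) after a single well-chosen specialization.

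For (a), I would start from three observations: $q\equiv 1\Mod{4}$ forces $4\mid\q$, so $\tfrac{\q}{2}$ and $\tfrac{\q}{4}$ are integers; $\omega(2)^{\q}=\omega(2^{\q})=\omega(1)=1$, so both sides of the claimed identity lie in $\{\pm1\}$; and $\omega^{\q/2}$ is the quadratic character of $\F_q^{\times}$, so $\omega(2)^{\q/2}=1$ iff $2$ is a square in $\F_q^{\times}$, while $(-1)^{\q/4}=1$ iff $8\mid\q$. Hence the claim is equivalent to: assuming $4\mid\q$, the element $2$ is a square in $\F_q^{\times}$ iff $8\mid\q$. I would prove this directly with roots of unity rather than quoting the second supplement to quadratic reciprocity, which is normally stated for primes and not for prime powers. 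If $8\mid\q$, pick a primitive $8$th root of unity $\zeta_8\in\F_q$; since $\zeta_8^2$ is a primitive $4$th root of unity we have $\zeta_8^2+\zeta_8^{-2}=0$, so $(\zeta_8+\zeta_8^{-1})^2=\zeta_8^2+2+\zeta_8^{-2}=2$ and $2$ is a square. Conversely, if $4\mid\q$, $8\nmid\q$, and $2=s^2$ with $s\in\F_q^{\times}$, choose $i\in\F_q$ with $i^2=-1$ (possible since $4\mid\q$); then $\bigl(s^{-1}(1+i)\bigr)^2=s^{-2}(1+i)^2=\tfrac12\cdot2i=i$, so $s^{-1}(1+i)$ has order $8$ in $\F_q^{\times}$, contradicting $8\nmid\q$.

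For (b), I would apply Lemma~\ref{hd}(c) with $N=2$; using $g(0)=-1$ it reads $g(2m)\,g(\tfrac{\q}{2})=\omega(2)^{2m}\,g(m)\,g(m+\tfrac{\q}{2})$ for all $m$. Taking $m=\tfrac{3\q}{8}$, which is an integer because $q\equiv 1\Mod{8}$, and noting $2m=\tfrac{3\q}{4}$ and $m+\tfrac{\q}{2}=\tfrac{7\q}{8}$, this specializes to
\[
g(\tfrac{3\q}{4})\,g(\tfrac{\q}{2})=\omega(2)^{3\q/4}\,g(\tfrac{3\q}{8})\,g(\tfrac{7\q}{8}).
\]
Multiplying by $g(\tfrac{3\q}{4})$ and then by $\omega(2)^{\q/4}$, and collapsing $\omega(2)^{\q/4}\cdot\omega(2)^{3\q/4}=\omega(2)^{\q}=1$ on the right-hand side, yields exactly
\[
\omega(2)^{\q/4}\,g(\tfrac{3\q}{4})^2\,g(\tfrac{\q}{2})=g(\tfrac{3\q}{4})\,g(\tfrac{3\q}{8})\,g(\tfrac{7\q}{8}),
\]
which is the assertion of (b).

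Neither step should be genuinely hard. The point requiring the most care is the exponent bookkeeping for $\omega(2)$: all exponents are read modulo $\q$ and the identity $\omega(2)^{\q}=1$ is used in both parts. The only conceptual subtlety is in (a), where the usual "$2$ is a square iff $q\equiv\pm1\Mod{8}$" criterion must be justified for prime powers $q$, which is why I would give the self-contained root-of-unity argument above. I would also point out that (b) does not logically use (a); the two statements are packaged together only because both are invoked later in the proof of Lemma~\ref{lemma: lemma2}.
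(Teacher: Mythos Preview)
Your proposal is correct. For part (b) your route is essentially the paper's: both apply Hasse--Davenport with $N=2$; the paper specializes at $m=\tfrac{\q}{8}$ and then invokes (a), while you specialize at $m=\tfrac{3\q}{8}$, which lands directly on the desired Gauss sums and avoids any appeal to (a). Your choice is slightly cleaner.

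For part (a) you take a genuinely different route. The paper derives (a) from Hasse--Davenport with $N=2$, $m=\tfrac{\q}{4}$: that gives $g(\tfrac{\q}{2})^2=\omega(2)^{\q/2}g(\tfrac{\q}{4})g(\tfrac{3\q}{4})$, and then Lemma~\ref{hd}(b) turns both sides into explicit signs times $q$, yielding $\omega(2)^{\q/2}=(-1)^{\q/4}$. Your argument is instead a self-contained characterization of when $2$ is a square in $\F_q^{\times}$ via $8$th roots of unity. The paper's approach is shorter and keeps everything inside the Gauss-sum toolkit already set up; yours has the virtue of being elementary and of explaining transparently why the sign is governed by $\q/4\bmod 2$, without needing the Hasse--Davenport machinery at all. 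Either argument is perfectly adequate here.
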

\begin{proof}
\begin{itemize}
    \item[(a)] It follows from a simple application of Hasse-Davenport with $N=2$ and $m=\frac{\q}{4}.$ 
    \item[(b)] It follows from the Hasse-Davenport relation with $N=2,$ $m=\frac{\q}{8}$ and then use item (a).
\end{itemize}
\end{proof}

\begin{lemma}\label{lemma: lemma2} The pair $\bm{\alpha}_{\clubsuit_1},\bm{\beta}_{\clubsuit_1}$ is defined over $\Q$ and  
\begin{itemize}
    \item[(a)] If $q \not\equiv 1 \Mod{4}$ and $q \not\equiv 1 \Mod{3},$ then $$\sum_{s \in S_1}\omega(a)^{-s}c_s=-2-qH_q(\bm{\alpha}_{\clubsuit_1},\bm{\beta}_{\clubsuit_1}\mid t_{\clubsuit_1}).$$
    \item[(b)] If $q \not \equiv 1 \Mod{4}$ and $q \equiv 1 \Mod{3},$ then $$\sum_{s \in S_1}\omega(a)^{-s}c_s=-2-qH_q(\bm{\alpha}_{\clubsuit_1},\bm{\beta}_{\clubsuit_1}\mid t_{\clubsuit_1})+2\left(J_q\left(\tfrac{1}{2},\tfrac{1}{3}\right)+J_q\left(\tfrac{1}{2},\tfrac{2}{3}\right)\right).$$ 
    \item[(c)] If $q \equiv 1 \Mod{4}$ and $q \not \equiv 1 \Mod{3},$ then $$\sum_{s \in S_1}\omega(a)^{-s}c_s=2(1+(-1)^{\frac{\q}{4}})+qH_q(\bm{\alpha}_{\clubsuit_1},\bm{\beta}_{\clubsuit_1}\mid t_{\clubsuit_1}).$$
    \item[(d)] If $q \equiv 1 \Mod{4}$ and $q \equiv 1 \Mod{3},$ then $$\sum_{s \in S_1}\omega(a)^{-s}c_s=2(1+(-1)^{\frac{\q}{4}})+qH_q(\bm{\alpha}_{\clubsuit_1},\bm{\beta}_{\clubsuit_1}\mid t_{\clubsuit_1})-2\left(J_q\left(\tfrac{1}{2},\tfrac{1}{3}\right)+J_q\left(\tfrac{1}{2},\tfrac{2}{3}\right)\right).$$
\end{itemize}    
\end{lemma}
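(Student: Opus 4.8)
The plan is to run the machinery of Lemma~\ref{lm: thelemma}: expand $\sum_{s\in S_1}\omega(a)^{-s}c_s$ via Koblitz's formula (Theorem~\ref{thm:Kob}) into one sum of Gauss-sum products indexed by $l\in\Z/\q\Z$, reduce it by Hasse--Davenport until its ``generic'' terms agree, up to an explicit scalar, with the generic terms of $H_q(\bm\alpha_{\clubsuit_1},\bm\beta_{\clubsuit_1}\mid t_{\clubsuit_1})$, and then evaluate the finitely many exceptional terms by hand. First I would record that $\bm\alpha_{\clubsuit_1},\bm\beta_{\clubsuit_1}$ is defined over $\Q$: in Definition~\ref{def: def} the polynomials are $F(x)=\prod_j(x-e^{2\pi i\alpha_j})=x^4-x^2+1$ and $G(x)=\prod_j(x-e^{2\pi i\beta_j})=x^4-1$, both rational, so $K_{\bm\alpha_{\clubsuit_1},\bm\beta_{\clubsuit_1}}=\Q$; writing $F/G=\bigl((x^{12}-1)(x^2-1)\bigr)/\bigl((x^6-1)(x^4-1)^2\bigr)$ produces the data for Definition~\ref{def: def2}: $p_1=12,\,p_2=2$, $q_1=6,\,q_2=q_3=4$, $D(x)=(x^4-1)(x^6-1)$, $s(0)=2$, $\epsilon=1$, $M=2^4 3^6$.

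By Theorem~\ref{thm:Kob} and the formula~\eqref{c:c} for $c_s$ (here $n=3$, $r=5$, so $c_s=\frac1{q\q}\prod_i g(s_i)$), the parametrization of $S_1$ gives
\[
\sum_{s\in S_1}\omega(a)^{-s}c_s=\frac1{q\q}\sum_{l=0}^{q-2}g(4l)\,g\!\left(2l+\tfrac\q2\right)g(3l)\,g\!\left(3l+\tfrac\q2\right)g(-12l)\,\omega(-12\psi)^{12l}.
\]
I would apply Lemma~\ref{hd} in the shape $g(k)g(k+\tfrac\q2)=g(\tfrac\q2)\omega(2)^{-2k}g(2k)$ to $g(3l)g(3l+\tfrac\q2)$, and the same relation together with $g(2l)g(-2l)=q$ to rewrite $g(2l+\tfrac\q2)$ through $g(4l)$ and $g(-2l)$; using $g(\tfrac\q2)^2=(-1)^{\q/2}q$ this collapses, for $l\notin\{0,\tfrac\q2\}$, to
\[
g(4l)\,g\!\left(2l+\tfrac\q2\right)g(3l)\,g\!\left(3l+\tfrac\q2\right)g(-12l)=(-1)^{\q/2}\omega(2)^{-10l}\,g(4l)^2 g(6l)g(-12l)g(-2l).
\]
A bookkeeping check on powers of $2$ and $3$ shows $\omega(2)^{-10l}\omega(-12\psi)^{12l}=\omega(\epsilon M^{-1}t_{\clubsuit_1})^{-l}$, so for every $l$ with $s(-l)=0$ the $l$-th term on the right is $(-1)^{\q/2}q$ times the $m=-l$ term of $H_q(\bm\alpha_{\clubsuit_1},\bm\beta_{\clubsuit_1}\mid t_{\clubsuit_1})$.

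The exceptional set $E=\{l:\ l\in\{0,\tfrac\q2\}\ \text{or}\ s(-l)>0\}$ is symmetric under $l\mapsto-l$ and equals $\{0,\tfrac\q2\}$ together with $\{\tfrac\q4,\tfrac{3\q}4\}$ when $4\mid\q$ and $\{\tfrac\q6,\tfrac\q3,\tfrac{2\q}3,\tfrac{5\q}6\}$ when $6\mid\q$. Summing the generic identity over $l\notin E$ reduces the claim to evaluating $\sum_{l\in E}\bigl[(\text{term at }l)-(-1)^{\q/2}q\,(H_q\text{-term at }m=-l)\bigr]$. The $l=0,\tfrac\q2$ terms give the universal constant; the $l\in\{\tfrac\q4,\tfrac{3\q}4\}$ terms, present only for $q\equiv1\Mod{4}$, are evaluated with Lemma~\ref{lemma:char}(a) (and Lemma~\ref{lemma:char}(b) when $8\mid\q$), contributing the $(-1)^{\q/4}$ and turning the constant into $2(1+(-1)^{\q/4})$; the $l\in\{\tfrac\q3,\tfrac{2\q}3,\tfrac\q6,\tfrac{5\q}6\}$ terms, present only for $q\equiv1\Mod{3}$, are handled by rewriting the cubes $g(\tfrac\q3)^3,\,g(\tfrac{2\q}3)^3$ that appear on the $H_q$ side via Lemma~\ref{lem: junk1}, and after the $1-q=-\q$ cancellations assemble into $-\tfrac2q g(\tfrac\q2)\bigl(g(\tfrac\q3)g(\tfrac\q6)+g(\tfrac{2\q}3)g(\tfrac{5\q}6)\bigr)$. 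Reading off the four combinations of $q\Mod{4}$ and $q\Mod{3}$, and using $(-1)^{\q/2}=-1$ for $q\equiv3\Mod{4}$ and $+1$ for $q\equiv1\Mod{4}$, gives exactly (a)--(d).

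The main obstacle is this last step. The generic reduction is essentially forced once the scalar $\omega(2)^{-10l}\omega(-12\psi)^{12l}=\omega(\epsilon M^{-1}t_{\clubsuit_1})^{-l}$ is confirmed on the nose; but the exceptional terms require keeping exact track, in each congruence class, of the parities of $\tfrac\q2,\tfrac\q3,\tfrac\q4$, of the stray $\omega(2)^{(\cdot)\q}$-type factors — which is precisely what Lemmas~\ref{lem: junk1} and \ref{lemma:char} are engineered to control — and of the correct matching of each exceptional $S_1$-term with an $H_q$-term of boosted $q$-power $q^{s(-l)}$. This is routine but delicate, and is where essentially all the work lies.
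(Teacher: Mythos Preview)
Your approach is essentially identical to the paper's: expand the $S_1$-sum via Koblitz, apply Hasse--Davenport at $N=2$ twice to reduce the generic term to $(-1)^{\q/2}g(4l)^2g(6l)g(-12l)g(-2l)\omega(2)^{-10l}$, identify this (after the character bookkeeping $\omega(2)^{-10l}\omega(-12\psi)^{12l}=\omega(M^{-1}t_{\clubsuit_1})^{-l}$) with the generic term of $qH_q(\bm\alpha_{\clubsuit_1},\bm\beta_{\clubsuit_1}\mid t_{\clubsuit_1})$, and then evaluate the exceptional terms in each congruence class using Lemmas~\ref{lem: junk1} and~\ref{lemma:char}. One small remark: the paper uses only Lemma~\ref{lemma:char}(a) in this proof---part~(b) (the $8\mid\q$ identity involving $g(\tfrac{3\q}{8}),g(\tfrac{7\q}{8})$) is not needed here, so your parenthetical invocation of it is harmless but unnecessary.
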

\begin{proof} The coefficients in \eqref{c:c} are
\begin{equation} \label{eq: eq16}
        c_{(-4m,-2m+\frac{\q}{2},-3m,-3m+\frac{\q}{2},12m)}=\dfrac{1}{q\q}g(-4m)g(-2m+\tfrac{\q}{2})g(-3m)g(-3m+\tfrac{\q}{2})g(12m), 
    \end{equation} $\text{ for every } m \in \{0,\dots,q-2\}.$ We first compute some special values of the $ c_s$s that will appear later when we separate into cases.     
    \begin{table}[H]
        \centering
        \begin{tabular}{cc||cc}
        $m$ & Corresponding $c_s$ & $m$ & Corresponding $c_s$\\
        \hline
        $0,\frac{\q}{2}$   & $\frac{(-1)^{\frac{\q}{2}+1}}{\q}$ & $\frac{\q}{6},\frac{2\q}{3}$   & $\frac{1}{q\q}g(\tfrac{\q}{3})g(\tfrac{\q}{6})g(\tfrac{\q}{2})$\\
        $\frac{\q}{4},\frac{3\q}{4}$ & $\frac{(-1)^{\frac{\q}{4}+1}}{\q}$ & $\frac{\q}{3},\frac{5\q}{6}$     & $\frac{1}{q\q}g(\tfrac{2\q}{3})g(\tfrac{5\q}{6})g(\tfrac{\q}{2})$\\
        \end{tabular}
        \label{tab:my_label}
    \end{table}
    
    Using Hasse-Davenport with $N=2$ and $-3m$ we obtain \begin{align}\label{al: al1}        g(-3m)g(-3m+\tfrac{\q}{2})=\dfrac{g(\frac{\q}{2})}{\omega(2)^{-6m}}g(-6m).
    \end{align}

    Using Hasse-Davenport again with $N=2$ and $-2m$ together with the relation $g(2m)g(-2m)=q,$ for all $m \notin \{0,\tfrac{\q}{2}\}$ we obtain \begin{align}\label{al: al2}
        g(-2m+\tfrac{\q}{2})=\tfrac{g(\tfrac{\q}{2})}{q\omega(2)^{-4m}}g(-4m)g(2m) \text{ for every } m \notin \{0,\tfrac{\q}{2}\}.
    \end{align}

    Substituting \eqref{al: al1} and \eqref{al: al2} in \eqref{eq: eq16} we obtain \begin{align}\label{c: cf}
        c_{(-4m,-2m+\frac{\q}{2},-3m,-3m+\frac{\q}{2},12m)}&=\dfrac{g(\frac{\q}{2})^2}{q^2\q\omega(2)^{-10m}}g(-4m)^2g(-6m)g(2m)g(12m)\\&=\dfrac{(-1)^{\frac{\q}{2}}}{q\q}g(-4m)^2g(-6m)g(2m)g(12m)\omega(2)^{10m} \text{ for every } m \notin \{0,\tfrac{\q}{2}\}.
    \end{align}

Now considering $q_1=4, q_2=4, q_3=6, p_1=2$ and $p_2=12$ and computing the quotient of the corresponding polynomials we obtain $$\dfrac{(x^2-1)(x^{12}-1)}{(x^4-1)^2(x^{6}-1)}=\dfrac{(x-e^{\frac{\pi i}{6} })(x-e^{\frac{5\pi i}{6}})(x-e^{\frac{7\pi i}{6}})(x-e^{\frac{11\pi i}{6}})}{(x-1)(x-e^{\frac{3\pi i}{2}})(x-e^{\frac{\pi i}{2}})(x-e^{\pi i})}.$$ The $\gcd$ of the numerator and denominator is $$D(x)=(x-1)^2(x-e^{\frac{\pi i}{2}})(x-e^{\pi i})^2(x-e^{\frac{3\pi i}{2}})(x-e^{\frac{\pi i}{3}})(x-e^{\frac{2\pi i}{3}})(x-e^{\frac{4\pi i}{3}})(x-e^{\frac{5\pi i}{3}}).$$ 

Moreover $K_{\bm{\alpha}_{\clubsuit_1},\bm{\beta}_{\clubsuit_1}}=\mathbb{Q}$ which means that $\bm{\alpha}_{\clubsuit_1},\bm{\beta}_{\clubsuit_1}$ are defined over $\mathbb{Q}.$ We have that $M=2^43^6$ and $\epsilon=1.$

\begin{align}\label{hip: hip1}
    H_q(\bm{\alpha}_{\clubsuit_1},\bm{\beta}_{\clubsuit_1}\mid t_{\clubsuit_1})&=\frac{(-1)^5}{1-q}\sum_{m=0}^{q-2}q^{-s(0)+s(m)}g(-4m)^2g(-6m)g(2m)g(12m)\omega(2^{-4}3^{-6}t)^m\\&=\frac{1}{q-1}\sum_{m=0}^{q-2}q^{-s(0)+s(m)}g(-4m)^2g(-6m)g(2m)g(12m)\omega(2^{-4}3^{-6}u)^m
\end{align}

Now fix $t=2^{-10}3^{-6}\psi^{-12}$ and substitute in the hypergeometric sum \eqref{hip: hip1}. Notice that it gives us the following \begin{align}\label{hip: hip2}
H_q(\bm{\alpha}_{\clubsuit_1},\bm{\beta}_{\clubsuit_1}\mid t_{\clubsuit_1})=\frac{1}{q-1}\sum_{m=0}^{q-2}q^{-s(0)+s(m)}g(-4m)^2g(-6m)g(2m)g(12m)\omega(2)^{10m}\omega\left(-12\psi\right)^{-12m}    
\end{align}

Now we need to compute the values of the function $s(m),$ $m \in \{0,\dots,q-2\}.$ For this purpose, we consider four cases depending on the modularity of $q-1$ for 3 and 4. 

\begin{itemize}
    \item[(a)] Let $\mathcal{B}=\Big\{0,\frac{\q}{2}\Big\}$ and by looking at $D(x)$ observe that $$s(m)=\begin{cases}
    2 \text{ if } m \in \mathcal{B} \\
    0 \text{ otherwise } 
    \end{cases}.$$ 
Substituting this in \eqref{hip: hip2} we get
    \begin{align*}
H_q(\bm{\alpha}_{\clubsuit_1},\bm{\beta}_{\clubsuit_1}\mid t_{\clubsuit_1})=-\dfrac{2}{\q}+\frac{1}{q^2\q}\sum_{m \notin \mathcal{B}}g(-4m)^2g(-6m)g(2m)g(12m)\omega(2)^{10m}\omega\left(-12\psi\right)^{-12m}    
\end{align*} and multiplying by $q$ we get \begin{align}\label{sum: sum1}
qH_q(\bm{\alpha}_{\clubsuit_1},\bm{\beta}_{\clubsuit_1}\mid t_{\clubsuit_1})=-\dfrac{2q}{\q}+\frac{1}{q\q}\sum_{m \notin \mathcal{B}}g(-4m)^2g(-6m)g(2m)g(12m)\omega(2)^{10m}\omega\left(-12\psi\right)^{-12m}    
\end{align}

Now that we have the hypergeometric sum, we look at the Koblitz formula and use \eqref{c: cf} and the special $c_s$'s to write it \begin{align}\label{kob: kobf}
\sum_{s \in S_1}\omega(a)^{-s}c_s=2\dfrac{(-1)^{\frac{\q}{2}+1}}{\q}+\dfrac{(-1)^{\frac{\q}{2}}}{q\q}\sum_{m \notin \mathcal{B}}g(-4m)^2g(-6m)g(2m)g(12m)\omega(2)^{10m}\omega(-12\psi)^{-12m}    
\end{align} 

Observe that since $4 \nmid \q,$ then $\frac{\q}{2}$ is odd and this implies that $(-1)^{\frac{\q}{2}}=-1.$ Substituting in \eqref{kob: kobf} we obtain \begin{align}\label{sum: sum2}
    \sum_{s \in S_1}\omega(a)^{-s}c_s=\dfrac{2}{\q}-\dfrac{1}{q\q}\sum_{m \notin \mathcal{B}}g(-4m)^2g(-6m)g(2m)g(12m)\omega(2)^{10m}\omega(-12\psi)^{-12m}.
\end{align}

Observe that we have a common sum in \eqref{sum: sum1} and \eqref{sum: sum2}, namely $$S=\dfrac{1}{q\q}\sum_{m \notin \mathcal{B}}g(-4m)^2g(-6m)g(2m)g(12m)\omega(2)^{10m}\omega(-12\psi)^{-12m}.$$ We isolate $S$ in \eqref{sum: sum1} and substitute in \eqref{sum: sum2} which gives us \begin{align}
\sum_{s \in S_1}\omega(a)^{-s}c_s=\frac{2}{\q}-\dfrac{2q}{\q}-qH_q(\bm{\alpha}_{\clubsuit_1},\bm{\beta}_{\clubsuit_1}\mid t_{\clubsuit_1}).    
\end{align}

Therefore, $$\sum_{s \in S_1}\omega(a)^{-s}c_s=\frac{2-2q}{\q}-qH_q(\bm{\alpha}_{\clubsuit_1},\bm{\beta}_{\clubsuit_1}\mid t_{\clubsuit_1})=\frac{2(1-q)}{\q}-qH_q(\bm{\alpha}_{\clubsuit_1},\bm{\beta}_{\clubsuit_1}\mid t_{\clubsuit_1})=-2-qH_q(\bm{\alpha}_{\clubsuit_1},\bm{\beta}_{\clubsuit_1}\mid t_{\clubsuit_1}).$$   

\item[(b)] Let $\mathcal{B}=\Big\{0,\frac{\q}{2},\frac{\q}{6},\frac{\q}{3},\frac{2\q}{3},\frac{5\q}{6}\Big\}.$ The values of $s$ are $s(m)=\begin{cases}
    2, \text{ if } m=0,\frac{\q}{2} \\
    1, \text{ if } m=\frac{\q}{6},\frac{\q}{3},\frac{2\q}{3},\frac{5\q}{6} \\
    0, \text{ otherwise }
    \end{cases}$

Substituting these values in \eqref{hip: hip2} we have \begin{align}
H_q(\bm{\alpha}_{\clubsuit_1},\bm{\beta}_{\clubsuit_1}\mid t_{\clubsuit_1})&=-\frac{2}{\q}+\frac{2}{q\q}(g(\tfrac{\q}{3})^3\omega(2)^{-\frac{\q}{3}}+g(\tfrac{2\q}{3})^3\omega(2)^{-\frac{2\q}{3}})\notag\\&+\frac{1}{q^2\q}\sum_{m \notin \mathcal{B}}g(-4m)^2g(-6m)g(2m)g(12m)\omega(2)^{10m}\omega(-12\psi)^{-12m}.    
\end{align} 
Therefore, \begin{align}\label{hyp: hyp2}
qH_q(\bm{\alpha}_{\clubsuit_1},\bm{\beta}_{\clubsuit_1}\mid t_{\clubsuit_1})&=-\frac{2q}{\q}+\frac{2}{\q}(g(\tfrac{\q}{3})^3\omega(2)^{-\frac{\q}{3}}+g(\tfrac{2\q}{3})^3\omega(2)^{-\frac{2\q}{3}})\notag \\&+\frac{1}{q\q}\sum_{m \notin \mathcal{B}}g(-4m)^2g(-6m)g(2m)g(12m)\omega(2)^{10m}\omega(-12\psi)^{-12m}.  
\end{align}    

The next step is to look at Koblitz's formula and use \eqref{c: cf} and the special $c_s$'s to write it as 
\begin{align}
\sum_{s \in S_1}\omega(a)^{-s}c_s&=\frac{2}{\q}+\frac{2}{q\q}g(\tfrac{\q}{2})(g(\tfrac{\q}{3})g(\tfrac{\q}{6})+g(\tfrac{2\q}{3})g(\tfrac{5\q}{6}))\notag\\&-\frac{1}{q\q}\sum_{m \notin \mathcal{B}}g(-4m)^2g(-6m)g(2m)g(12m)\omega(2)^{10m}\omega(-12\psi)^{-12m}.   
\end{align}

Let $$S=\frac{1}{q\q}\sum_{m \notin \mathcal{B}}g(-4m)^2g(-6m)g(2m)g(12m)\omega(2)^{10m}\omega(-12\psi)^{-12m}$$ which is common between \eqref{hyp: hyp2} and \eqref{sum: sum2}. Isolating $S$ in \eqref{hyp: hyp2} and substituting in \eqref{sum: sum2} we obtain \begin{align}
\sum_{s \in S_1}\omega(a)^{-s}c_s&=\frac{2}{\q}+\frac{2}{q\q}g(\tfrac{\q}{2})(g(\tfrac{\q}{3})g(\tfrac{\q}{6})+g(\tfrac{2\q}{3})g(\tfrac{5\q}{6}))-\frac{2q}{\q}+\\&\frac{2}{\q}(g(\tfrac{\q}{3})^3\omega(2)^{-\frac{\q}{3}}+g(\tfrac{2\q}{3})^3\omega(2)^{-\frac{2\q}{3}})-qH_q(\bm{\alpha}_{\clubsuit_1},\bm{\beta}_{\clubsuit_1}\mid t_{\clubsuit_1}).    
\end{align}

Now applying Lemma \ref{lem: junk1} and rewriting one of the terms, we obtain \begin{align*}
\sum_{s \in S_1}\omega(a)^{-s}c_s&=\frac{-2(q-1)}{\q}-qH_q(\bm{\alpha}_{\clubsuit_1},\bm{\beta}_{\clubsuit_1}\mid t_{\clubsuit_1})-\frac{2q}{q\q}g(\tfrac{\q}{2})(g(\tfrac{\q}{3})g(\tfrac{\q}{6})+g(\tfrac{2\q}{3})g(\tfrac{5\q}{6}))\\&+\frac{2}{q\q}g(\tfrac{\q}{2})(g(\tfrac{\q}{3})g(\tfrac{\q}{6})+g(\tfrac{2\q}{3})g(\tfrac{5\q}{6}))\\&=-2-qH_q(\bm{\alpha}_{\clubsuit_1},\bm{\beta}_{\clubsuit_1}\mid t_{\clubsuit_1})-\frac{2(q-1)}{q\q}g(\tfrac{\q}{2})(g(\tfrac{\q}{3})g(\tfrac{\q}{6})+g(\tfrac{2\q}{3})g(\tfrac{5\q}{6}))\\&=-2-qH_q(\bm{\alpha}_{\clubsuit_1},\bm{\beta}_{\clubsuit_1}\mid t_{\clubsuit_1})-\frac{2}{q}g(\tfrac{\q}{2})(g(\tfrac{\q}{3})g(\tfrac{\q}{6})+g(\tfrac{2\q}{3})g(\tfrac{5\q}{6}))\\&=-2-qH_q(\bm{\alpha}_{\clubsuit_1},\bm{\beta}_{\clubsuit_1}\mid t_{\clubsuit_1})+2\left(J_q\left(\tfrac{1}{2},\tfrac{1}{3}\right)+J_q\left(\tfrac{1}{2},\tfrac{2}{3}\right)\right),
\end{align*} where the last equality follows from Lemma \ref{lemma: JacobiGauss}, together with the fact that $\tfrac{\q}{6}$ is odd since $q \not\equiv 1 \Mod{4}.$
    \item[(c)] Let $\mathcal{B}=\Big\{0,\frac{\q}{4},\frac{\q}{2},\frac{3\q}{4}\Big\}.$ The values of $s$ are $s(m)=\begin{cases}
    2, \text{ if } m=0,\frac{\q}{2} \\
    1, \text{ if } m=\frac{\q}{4},\frac{3\q}{4}\\
    0, \text{ otherwise }
    \end{cases}$ 

    Substituting the previous values in \eqref{hip: hip2}, we get $$H_q(\bm{\alpha}_{\clubsuit_1},\bm{\beta}_{\clubsuit_1}\mid t_{\clubsuit_1})=-\tfrac{2}{\q}(1+\omega(2)^{\frac{\q}{2}})+\tfrac{1}{\q q^2}\sum_{m \notin \mathcal{B}}g(-4m)^2g(-6m)g(2m)g(12m)\omega(2)^{10m}\omega(-12\psi)^{-12m}.$$ By Lemma \ref{lemma:char}, we have 
    \begin{align}\label{hyp: hypp}
    H_q(\bm{\alpha}_{\clubsuit_1},\bm{\beta}_{\clubsuit_1}\mid t_{\clubsuit_1})=-\frac{2}{\q}(1+(-1)^{\frac{\q}{4}})+\frac{1}{\q q^2}\sum_{m \notin \mathcal{B}}g(-4m)^2g(-6m)g(2m)g(12m)\omega(2)^{10m}\omega(-12\psi)^{-12m}.  
    \end{align} On the other hand, from \eqref{eq: eq16} and \eqref{c: cf}, we have \begin{align}\label{kob: kobb}
    \sum_{s \in S_1} \omega(a)^{-s}c_s&=-\frac{2}{\q}(1+(-1)^{\frac{\q}{4}})+\frac{1}{q\q}\sum_{m \notin \mathcal{B}}g(-4m)^2g(-6m)g(2m)g(12m)\omega(2)^{10m}\omega(-12\psi)^{-12m}    
    \end{align} Comparing \eqref{hyp: hypp} and \eqref{kob: kobb}, we obtain the formula $$\sum_{s \in S_1} \omega(a)^{-s}c_s=2(1+(-1)^{\frac{\q}{4}})+qH_q(\bm{\alpha}_{\clubsuit_1},\bm{\beta}_{\clubsuit_1}\mid t_{\clubsuit_1}).$$
    
    \item[(d)] Let $\mathcal{B}=\Big\{0,\frac{\q}{4},\frac{\q}{2},\frac{3\q}{4},\frac{\q}{6},\frac{\q}{3},\frac{2\q}{3},\frac{5\q}{6}\Big\}.$ The values of $s$ are $$s(m)=\begin{cases}
    2, \text{ if } m=0,\frac{\q}{2} \\
    1, \text{ if } m=\frac{\q}{4},\frac{3\q}{4},\frac{\q}{6},\frac{\q}{3},\frac{2\q}{3},\frac{5\q}{6}\\
    0, \text{ otherwise }
    \end{cases}.$$ Using \eqref{hip: hip2}, we get \begin{align*}    H_q(\bm{\alpha}_{\clubsuit_1},\bm{\beta}_{\clubsuit_1}\mid t_{\clubsuit_1})&=-\frac{2}{\q}(1+(-1)^{\frac{\q}{4}})+\frac{2}{\q q}(g(\tfrac{\q}{3})^3\omega(2)^{\frac{2\q}{3}}+g(\tfrac{2\q}{3})^3\omega(2)^{\frac{\q}{3}}))\\&+\frac{1}{\q q^2}\sum_{m \notin \mathcal{B}}g(-4m)^2g(-6m)g(2m)g(12m)\omega(2)^{10m}\omega(-12\psi)^{-12m}.    
    \end{align*}
    
    Applying Lemma \eqref{lem: junk1}, we rewrite 
    \begin{align*}    H_q(\bm{\alpha}_{\clubsuit_1},\bm{\beta}_{\clubsuit_1}\mid t_{\clubsuit_1})&=-\frac{2}{\q}(1+(-1)^{\frac{\q}{4}})+\frac{2}{\q q}g(\tfrac{\q}{2})(g(\tfrac{\q}{6})g(\tfrac{\q}{3})+g(\tfrac{5\q}{6})g(\tfrac{2\q}{3}))\\&+\frac{1}{\q q^2}\sum_{m \notin \mathcal{B}}g(-4m)^2g(-6m)g(2m)g(12m)\omega(2)^{10m}\omega(-12\psi)^{-12m}.    
    \end{align*} 
    On the other hand, we also get \begin{align*}
    \sum_{s \in S_1}\omega(a)^{-s}c_s&=-\tfrac{2}{\q}(1+(-1)^{\frac{\q}{4}})+\tfrac{2}{q\q}g(\tfrac{\q}{2})(g(\tfrac{\q}{3})g(\tfrac{\q}{6})+g(\tfrac{2\q}{3})g(\tfrac{5\q}{6}))\\&+\tfrac{1}{q\q}\sum_{m \notin \mathcal{B}}g(-4m)^2g(-6m)g(2m)g(12m)\omega(2)^{10m}\omega(-12\psi)^{-12m}. 
    \end{align*}
    Therefore, comparing the last two equations we obtain \begin{align*}
    \sum_{s \in S_1}\omega(a)^{-s}c_s&=2(1+(-1)^{\frac{\q}{4}})+qH_q(\bm{\alpha}_{\clubsuit_1},\bm{\beta}_{\clubsuit_1}\mid t_{\clubsuit_1})-\tfrac{2}{q}g(\tfrac{\q}{2})(g(\tfrac{\q}{3})g(\tfrac{\q}{6})+g(\tfrac{2\q}{3})g(\tfrac{5\q}{6}))\\&=2(1+(-1)^{\frac{\q}{4}})+qH_q(\bm{\alpha}_{\clubsuit_1},\bm{\beta}_{\clubsuit_1}\mid t_{\clubsuit_1})-2\left(J_q\left(\tfrac{1}{2},\tfrac{1}{3}\right)+J_q\left(\tfrac{1}{2},\tfrac{2}{3}\right)\right), 
    \end{align*} and the last equality follows from Lemma \ref{lemma: JacobiGauss} and from the fact that $\tfrac{\q}{6}$ is even since $q \equiv 1 \Mod{4}.$
\end{itemize}  
\end{proof}

\begin{lemma}\label{lemma:club2}
Suppose that $q \equiv 1 \Mod{4},$ then the pair $\bm{\alpha}_{\clubsuit_2}=\left\{\tfrac{7}{12},\tfrac{1}{6},\tfrac{11}{12},\tfrac{5}{6}\right\},\bm{\beta}_{\clubsuit_2}=\left\{\tfrac{1}{8},\tfrac{5}{8},\tfrac{1}{4},1\right\}$ is defined over $\Q(i)$ and \begin{itemize}
    \item[(a)] If $q \not \equiv 1 \Mod{3},$ then 
\begin{align*}
    \sum_{s \in S_2}\omega(a)^{-s}c_s&=2(-1)^{\frac{\q}{4}}-\left(J_q\left(\tfrac{1}{4},\tfrac{1}{4}\right)+J_q\left(\tfrac{3}{4},\tfrac{3}{4}\right)\right)\\&+(-1)^{\frac{\q}{4}}qF_q((4,2,3,3,-12,1,-1),(0,-1,0,1,0,0,0),4 \mid t_{\clubsuit_2}).
\end{align*}  
\item[(b)] If $q \equiv 1 \Mod{3},$ then \begin{align*}
\sum_{s \in S_2}\omega(a)^{-s}c_s&=2(-1)^{\frac{\q}{4}}+(-1)^{\frac{\q}{4}}qF_q((4,2,3,3,-12,1,-1),(0,-1,0,1,0,0,0),4 \mid t_{\clubsuit_2})\\&-\left(J_q\left(\tfrac{1}{4},\tfrac{1}{4}\right)+J_q\left(\tfrac{3}{4},\tfrac{3}{4}\right)\right)-\sum_{i \in (\Z/12\Z)^{\times}}J_q(\tfrac{i}{12},\tfrac{i}{4}). \\  
\end{align*} 
\end{itemize}
\end{lemma}
\begin{proof}
Let $N=4.$ By Definition \ref{AA25}, we have \begin{align*}\label{alg: summ}
&F_q((4,2,3,3,-12,1,-1),(0,-1,0,1,0,0,0),4 \mid t_{\clubsuit_2})=-\tfrac{1}{\q}\\&-\tfrac{1}{\q}\sum_{m=1}^{q-2}\tfrac{g(-4m)g(-2m+\frac{3\q}{4})g(-3m)g(-3m+\frac{\q}{4})g(12m)g(-m)g(m)}{g(0)^5g(\frac{\q}{4})g(\frac{3\q}{4})}q^{s_{\bm{\delta}}(-m)-s_{\bm{\delta}}(0)}\omega(-2^{10}3^612^{-12}t)^m.
\end{align*}  
Observe that $g(-m)g(m)=(-1)^mq$ for $m \neq 0 \Mod{\q}$ and $g(\tfrac{\q}{4})g(\tfrac{3\q}{4})=(-1)^{\frac{\q}{4}}q.$ Moreover, we use that for a generator $\omega$ of the multiplicative character group of $\F_q^{\times},$ we have $\omega(-1)=-1$ and therefore $\omega(-1)^m=(-1)^m,$ so we can pull the factor $(-1)^m$ inside of the argument of the character $\omega$ and write  \begin{align}
&(-1)^{\frac{\q}{4}}F_q((4,2,3,3,-12,1,-1),(0,-1,0,1,0,0,0),4 \mid t_{\clubsuit_2})=-\tfrac{(-1)^{\frac{\q}{4}}}{\q}\\&+\tfrac{1}{\q}\sum_{m=1}^{q-2}g(-4m)g(-2m+\tfrac{3\q}{4})g(-3m)g(-3m+\tfrac{\q}{4})g(12m)q^{s_{\bm{\delta}}(-m)-s_{\bm{\delta}}(0)}\omega(-12\psi)^{-12m}. \notag
\end{align} 

Now consider the quotient of polynomials as in Definition \ref{def: quot}, $$
\frac{(x^{12}-1)(x-1)}{(x^4-1)(x^2-\zeta_4)(x^3-1)(x^3-\zeta_4^{-1})(x-1)}.$$ 

The greatest common divisor of the numerator and denominator is the polynomial $D_{\bm{\delta}}$ whose roots are given by $e^{2\pi i a},$ where $a \in \left\{0,0,\tfrac{1}{4},\tfrac{1}{2},\tfrac{3}{4},\tfrac{1}{3},\tfrac{2}{3},\tfrac{7}{12},\tfrac{11}{12}\right\}.$ Therefore, we obtain the pair $$\bm{\alpha}_{\clubsuit_2}=\left\{\tfrac{7}{12},\tfrac{1}{6},\tfrac{11}{12},\tfrac{5}{6}\right\},\bm{\beta}_{\clubsuit_2}=\left\{\tfrac{1}{8},\tfrac{5}{8},\tfrac{1}{4},1\right\}$$ 
which is defined over $\Q(i).$
\begin{itemize}
    \item[(a)] If $q \not \equiv 1 \Mod{3},$ then the values of $m \in \Z/\q\Z$ such that $e^{\frac{-2\pi i m}{\q}}$ is a root of $D_{\bm{\delta}}$ are $$\mathcal{B}_1=\left\{\tfrac{\q}{4},\tfrac{\q}{2},\tfrac{3\q}{4},0\right\}.$$ Notice that all of them have multiplicity one except $m=0,$ which has multiplicity $2,$ so \begin{align*}
    s_{\bm{\delta}}(-m)=\begin{cases}
    0, \text{ if } m \in \Z/\q\Z \setminus \mathcal{B}_1 \\
    1, \text{ if } m \in \mathcal{B}_1\setminus \{0\} \\
    2, \text{ if } m=0
    \end{cases}.    
    \end{align*} 
    We pull out of the sum \eqref{alg: summ} the terms corresponding to $m \in \mathcal{B}$ and obtain \begin{align}\label{alg: c2f2hyps3}
    &(-1)^{\frac{\q}{4}}F_q((4,2,3,3,-12,1,-1),(0,-1,0,1,0,0,0),4 \mid t_{\clubsuit_2})=-\tfrac{2(-1)^{\frac{\q}{4}}}{\q}+\tfrac{g(\tfrac{\q}{2})}{q\q}(g(\tfrac{\q}{4})^2+g(\tfrac{3\q}{4})^2)\notag\\&+\tfrac{1}{q^2\q}\sum_{m \notin \mathcal{B}}g(-4m)g(-2m+\tfrac{3\q}{4})g(-3m)g(-3m+\tfrac{\q}{4})g(12m)\omega(-12\psi)^{-12m}.  \end{align} 
    On the other hand, by Koblitz's formula, one gets \begin{align}\label{alg: c2f2s3}
    &\sum_{s \in S_2}\omega(a)^{-s}c_s=\tfrac{2(-1)^{\frac{\q}{4}+1}}{\q}+\tfrac{1}{q\q}g(\tfrac{\q}{2})(g(\tfrac{\q}{4})^2+g(\tfrac{3\q}{4})^2)\notag\\&+\tfrac{1}{q\q}\sum_{m \notin \mathcal{B}}g(-4m)g(-2m+\tfrac{3\q}{4})g(-3m)g(-3m+\tfrac{\q}{4})g(12m)\omega(-12\psi)^{-12m}.    
    \end{align}
    Comparing \eqref{alg: c2f2s3} and \eqref{alg: c2f2hyps3} and converting between Gauss sums and Jacobi sums as in the proof of items (b) and (d) of Lemma \ref{lm: thelemma}, we obtain \begin{align*}
\sum_{s \in S_2}\omega(a)^{-s}c_s&=2(-1)^{\frac{\q}{4}}-\left(J_q\left(\tfrac{1}{4},\tfrac{1}{4}\right)+J_q\left(\tfrac{3}{4},\tfrac{3}{4}\right)\right)+(-1)^{\frac{\q}{4}}qF_q((4,2,3,3,-12,1,-1),(0,-1,0,1,0,0,0),4 \mid t_{\clubsuit_2}).    
\end{align*}
\item[(b)] If $q \equiv 1 \Mod{3},$ then the values of $m \in \Z/\q\Z$ such that $e^{\frac{-2\pi i m}{\q}}$ is a root of $D_{\bm{\delta}}$ are the elements of $$\mathcal{B}_1=\left\{\tfrac{\q}{4},\tfrac{\q}{2},\tfrac{3\q}{4},\tfrac{\q}{3},\tfrac{2\q}{3},0\right\} \text{ and } \mathcal{B}_2=\left\{\tfrac{\q}{12},\tfrac{5\q}{12}\right\}.$$ Notice that all of them have multiplicity one except $m=0,$ which has multiplicity $2.$ \begin{align*}
    s_{\bm{\delta}}(-m)=\begin{cases}
    0, \text{ if } m \in \Z/\q\Z \setminus \mathcal{B}_1 \\
    1, \text{ if } m \in \mathcal{B}_1 \cup \mathcal{B}_2\setminus \{0\} \\
    2, \text{ if } m=0
    \end{cases}.    
    \end{align*} 
    We pull out of the sum \eqref{alg: summ} the terms corresponding to $m \in \mathcal{B}$ and obtain \begin{align}
    &(-1)^{\frac{\q}{4}}F_q((4,2,3,3,-12,1,-1),(0,-1,0,1,0,0,0),4 \mid t_{\clubsuit_2})=-\tfrac{2(-1)^{\frac{\q}{4}}}{\q}+\tfrac{g(\tfrac{\q}{2})}{q\q}(g(\tfrac{\q}{4})^2+g(\tfrac{3\q}{4})^2)\\&+\tfrac{g(\tfrac{\q}{3})}{q\q}(g(\tfrac{\q}{4})g(\tfrac{5\q}{12})+g(\tfrac{3\q}{4})g(\tfrac{11\q}{12}))\notag+\tfrac{g(\tfrac{2\q}{3})}{q\q}(g(\tfrac{\q}{4})g(\tfrac{\q}{12})+g(\tfrac{3\q}{4})g(\tfrac{7\q}{12}))\notag\\&+\tfrac{1}{q^2\q}\sum_{m \notin \mathcal{B}}g(-4m)g(-2m+\tfrac{3\q}{4})g(-3m)g(-3m+\tfrac{\q}{4})g(12m)\omega(-12\psi)^{-12m}.\notag\end{align} 
    On the other hand, by Koblitz's formula, one gets \begin{align}\label{alg: c2f2s2}
    &\sum_{s \in S_2}\omega(a)^{-s}c_s=\tfrac{2(-1)^{\frac{\q}{4}+1}}{\q}+\tfrac{1}{q\q}g(\tfrac{\q}{2})(g(\tfrac{\q}{4})^2+g(\tfrac{3\q}{4})^2)+\tfrac{g(\tfrac{\q}{3})}{q\q}(g(\tfrac{\q}{4})g(\tfrac{5\q}{12})+g(\tfrac{3\q}{4})g(\tfrac{11\q}{12}))\\&+\tfrac{g(\tfrac{2\q}{3})}{q\q}(g(\tfrac{\q}{4})g(\tfrac{\q}{12})+g(\tfrac{3\q}{4})g(\tfrac{7\q}{12}))\notag+\tfrac{1}{q\q}\sum_{m \notin \mathcal{B}}g(-4m)g(-2m+\tfrac{3\q}{4})g(-3m)g(-3m+\tfrac{\q}{4})g(12m)\omega(-12\psi)^{-12m}.    
    \end{align}
    Comparing \eqref{alg: c2f2s2} and \eqref{alg: c2f2hyps3} and converting between Gauss and Jacobi sums, we obtain \begin{align*}
\sum_{s \in S_2}\omega(a)^{-s}c_s&=2(-1)^{\frac{\q}{4}}+(-1)^{\frac{\q}{4}}qF_q((4,2,3,3,-12,1,-1),(0,-1,0,1,0,0,0),4 \mid t_{\clubsuit_2})\\&-\left(J_q\left(\tfrac{1}{4},\tfrac{1}{4}\right)+J_q\left(\tfrac{3}{4},\tfrac{3}{4}\right)\right)-\sum_{i \in (\Z/12\Z)^{\times}}J_q(\tfrac{i}{12},\tfrac{i}{4}).    
\end{align*}
\end{itemize}
\end{proof}

\begin{lemma}\label{lemma:club3}
Suppose that $q \equiv 1 \Mod{4},$ then the pair $\bm{\alpha}_{\clubsuit_3}=\left\{\tfrac{1}{12},\tfrac{1}{6},\tfrac{5}{12},\tfrac{5}{6}\right\},\bm{\beta}_{\clubsuit_3}=\left\{\tfrac{3}{8},\tfrac{7}{8},\tfrac{3}{4},1\right\}$ is defined over $\Q(i)$ and 
\begin{itemize}
    \item[(a)] If $q \not\equiv 1 \Mod{3},$ then \begin{align*}
    \sum_{s \in S_3}\omega(a)^{-s}c_s&=-\left(J_q\left(\tfrac{1}{4},\tfrac{1}{4}\right)+J_q\left(\tfrac{3}{4},\tfrac{3}{4}\right)\right)+(-1)^{\frac{\q}{4}}(2+qF_q((4,2,3,3,-12,1,-1),(0,1,0,-1,0,0,0),4 \mid t_{\clubsuit_3})).
\end{align*}
\item[(b)] If $q \equiv 1 \Mod{3},$ then \begin{align*}
    \sum_{s \in S_3}\omega(a)^{-s}c_s&=(-1)^{\frac{\q}{4}}(2+qF_q((4,2,3,3,-12,1,-1),(0,1,0,-1,0,0,0),4 \mid t_{\clubsuit_3}))\\&-\left(J_q\left(\tfrac{1}{4},\tfrac{1}{4}\right)+J_q\left(\tfrac{3}{4},\tfrac{3}{4}\right)\right)-\sum_{i \in (\Z/12\Z)^{\times}}J_q(\tfrac{i}{12},\tfrac{i}{4}). 
\end{align*}
\end{itemize}    
\end{lemma}
\begin{proof}
The proof is analogous to the previous Lemma with $(0,1,0,-1,0,0,0)$ instead of $(0,-1,0,-1,0,0,0).$ 
\end{proof}

\bigskip 
Combining Lemmas \ref{lm: thelemma}, \ref{lemma: lemma2}, \ref{lemma:club2}, and \ref{lemma:club3}, we obtain the following point count for $\Csf_2\Fsf_2$ on the maximal torus
\begin{lemma}\label{lemma: lemmainsidec2f2}
\begin{itemize}
    \item[(a)] If $q \not\equiv 1 \Mod{4}$ and $q \not \equiv 1 \Mod{3},$ then \begin{align*}
    \#U_{\psi}(\F_q)&=q^2-3q+H_q(\bm{\alpha}_{\clubsuit_0},\bm{\beta}_{\clubsuit_0}\mid t_{\clubsuit_0})-qH_q(\bm{\alpha}_{\clubsuit_1},\bm{\beta}_{\clubsuit_1}\mid t_{\clubsuit_1}).   
    \end{align*}
    \item[(b)] If $q \not\equiv 1 \Mod{4}$ and $q \equiv 1 \Mod{3},$ then 
\begin{align*}
    \#U_{\psi}(\F_q)&=q^2-3q+2+H_q(\bm{\alpha}_{\clubsuit_0},\bm{\beta}_{\clubsuit_0}\mid t_{\clubsuit_0})-qH_q(\bm{\alpha}_{\clubsuit_1},\bm{\beta}_{\clubsuit_1}\mid t_{\clubsuit_1})+2\left(J_q\left(\tfrac{1}{2},\tfrac{1}{3}\right)+J_q\left(\tfrac{1}{2},\tfrac{2}{3}\right)\right).   
    \end{align*}
    \item[(c)] If $q \equiv 1 \Mod{4}$ and $q \not \equiv 1 \Mod{3},$ then \begin{align*}
    \#U_{\psi}(\F_q)&=q^2-3q+6(1+(-1)^{\frac{\q}{4}})-3\left(J_q\left(\tfrac{1}{4},\tfrac{1}{4}\right)+J_q\left(\tfrac{3}{4},\tfrac{3}{4}\right)\right)+H_q(\bm{\alpha}_{\clubsuit_0},\bm{\beta}_{\clubsuit_0}\mid t_{\clubsuit_0})+qH_q(\bm{\alpha}_{\clubsuit_1},\bm{\beta}_{\clubsuit_1}\mid t_{\clubsuit_1})\\&+(-1)^{\frac{\q}{4}}q(F_q((4,2,3,3,-12,1,-1),(0,-1,0,1,0,0,0),4 \mid t_{\clubsuit_2})\\&+F_q((4,2,3,3,-12,1,-1),(0,1,0,-1,0,0,0),4 \mid t_{\clubsuit_3})).  
    \end{align*}
    \item[(d)] If $q \equiv 1 \Mod{3}$ and $q \equiv 1 \Mod{4},$ then \begin{align*}
    \#U_{\psi}(\F_q)&=q^2-3q+8+6(-1)^{\frac{\q}{4}}-3\left(J_q\left(\tfrac{1}{4},\tfrac{1}{4}\right)+J_q\left(\tfrac{3}{4},\tfrac{3}{4}\right)\right)-2\sum_{i \in (\Z/12\Z)^{\times}}J_q\left(\tfrac{i}{12},\tfrac{i}{4}\right)\\&-2\left(J_q\left(\tfrac{1}{2},\tfrac{1}{3}\right)+J_q\left(\tfrac{1}{2},\tfrac{2}{3}\right)\right)+H_q(\bm{\alpha}_{\clubsuit_0},\bm{\beta}_{\clubsuit_0}\mid t_{\clubsuit_0})+qH_q(\bm{\alpha}_{\clubsuit_1},\bm{\beta}_{\clubsuit_1}\mid t_{\clubsuit_1})\\&+(-1)^{\frac{\q}{4}}q(F_q((4,2,3,3,-12,1,-1),(0,-1,0,1,0,0,0),4 \mid t_{\clubsuit_2})\\&+F_q((4,2,3,3,-12,1,-1),(0,1,0,-1,0,0,0),4 \mid t_{\clubsuit_3})). 
    \end{align*}
\end{itemize}    
\end{lemma}

\bigskip 

Now, we compute the number of points on the smaller tori for $\Csf_2\Fsf_2.$

\begin{lemma}\label{lemma: lemmaoutsidec2f2}
\begin{itemize}
    \item[(a)] If $q \not \equiv 1 \Mod{4}$ and $q \not \equiv 1 \Mod{3},$ then \begin{align*}
    \#X_{\psi}(\F_q)-\#U_{\psi}(\F_q)=3q+1.   
    \end{align*} 
    \item[(b)] If $q\not\equiv 1 \Mod{4}$ and $q \equiv 1 \Mod{3},$ then \begin{align*}
    \#X_{\psi}(\F_q)-\#U_{\psi}(\F_q)=3q-1-2\left(J_q\left(\tfrac{1}{2},\tfrac{1}{3}\right)+J_q\left(\tfrac{1}{2},\tfrac{2}{3}\right)\right).   
    \end{align*}
    \item[(c)] If $q \equiv 1 \Mod{4}$ and $q \not \equiv 1 \Mod{3},$ then \begin{align*}
    \#X_{\psi}(\F_q)-\#U_{\psi}(\F_q)=(5+2(-1)^{\frac{\q}{4}})q-5-6(-1)^{\frac{\q}{4}}+3\left(J_q\left(\tfrac{1}{4},\tfrac{1}{4}\right)+J_q\left(\tfrac{3}{4},\tfrac{3}{4}\right)\right).
\end{align*}
    \item[(d)] If $q \equiv 1 \Mod{4}$ and $q \equiv 1 \Mod{3},$ then \begin{align*}
    \#X_{\psi}(\F_q)-\#U_{\psi}(\F_q)&=(5+2(-1)^{\frac{\q}{4}})q-7-6(-1)^{\frac{\q}{4}}+3\left(J_q\left(\tfrac{1}{4},\tfrac{1}{4}\right)+J_q\left(\tfrac{3}{4},\tfrac{3}{4}\right)\right)\\&+2\left(J_q\left(\tfrac{1}{2},\tfrac{1}{3}\right)+J_q\left(\tfrac{1}{2},\tfrac{2}{3}\right)\right)+\sum_{i \in (\Z/12\Z)^{\times}}J_q(\tfrac{i}{12},\tfrac{i}{4}).   
    \end{align*}
\end{itemize}    
\end{lemma}
\begin{proof}
We compute only cases (a) and (d), since (b) and (c) can be worked out similarly.
\begin{itemize}
\item[(a)] Suppose that $q \not \equiv 1 \Mod{4}$ and $q \not \equiv 1 \Mod{3}.$ 

\bigskip 

\textbf{One coordinate is zero and all the others are non-zero} 

\bigskip 

The first case is $x_0=0,$ whose corresponding equation is $x_1^4+x_2^4+x_3^4=0.$ The system is then \begin{align*}
\begin{pmatrix}
4 & 0 & 0 \\
0 & 4 & 0 \\
0 & 0 & 4 \\
1 & 1 & 1 
\end{pmatrix}\begin{pmatrix}
s_0 \\
s_1 \\
s_2
\end{pmatrix}\equiv 0 \Mod{\q}
\end{align*} and has solutions $\{(0,0,0),\tfrac{\q}{2}(1,0,1),\tfrac{\q}{2}(0,1,1),\tfrac{\q}{2}(1,1,0)\}.$ Therefore, the number of points is $q+1.$ The second case is $x_1=0$ and the equation is $x_2^4+x_3^4=0.$ The corresponding system is \begin{align*}
\begin{pmatrix}
 4 & 0 \\
 0 & 4 \\
 1 & 1 
\end{pmatrix}\begin{pmatrix}
s_0 \\
s_1 
\end{pmatrix}\equiv 0 \Mod{\q}
\end{align*} whose solutions are $\{(0,0),\tfrac{\q}{2}(1,1)\}.$ Consequently, the number of points is $c_{(0,0)}+c_{\tfrac{\q}{2}(1,1)}=0.$ The third and fourth cases are similar, namely $x_2=0$ and $x_3=0,$ and they both give $q-1$ points. For instance, if $x_2=0$, we have equation $x_0^3x_1+x_1^4+x_3^4=0$ and system \begin{align*}
\begin{pmatrix}
 3 & 0 & 0\\
 1 & 4 & 0\\
 0 & 0 & 4 \\
 1 & 1 & 1
\end{pmatrix}\begin{pmatrix}
s_0 \\
s_1 \\
s_2
\end{pmatrix}\equiv 0 \Mod{\q}.
\end{align*}
The system has solutions $\{(0,0,0),\tfrac{\q}{2}(0,1,1)\}$ and hence the number of points is $$c_{(0,0,0)}+c_{\tfrac{\q}{2}(0,1,1)}=q-1.$$ 

\bigskip

\textbf{Two coordinates are zero and the other two are non-zero} 

\bigskip 

There are only two cases where we have solutions. The first one is $x_0=x_1=0$ and the equation becomes $x_2^3x_3+x_3^4=0.$ The corresponding system is \begin{align*}
\begin{pmatrix}
 4 & 0 \\
 0 & 4 \\
 1 & 1 
\end{pmatrix}\begin{pmatrix}
s_0 \\
s_1 
\end{pmatrix}\equiv 0 \Mod{\q}
\end{align*} and has solutions $\{(0,0),\tfrac{\q}{2}(1,1)\}$ and therefore the number of points is $c_{(0,0)}+c_{\tfrac{\q}{2}(1,1)}=0.$ The second case is $x_2=x_3=0$ whose corresponding equation is $x_0^3x_1+x_1^4=0$ and the system is
\begin{align*}
\begin{pmatrix*}
3 & 0 \\
1 & 4 \\
1 & 1 
\end{pmatrix*}\begin{pmatrix*}
s_0 \\
s_1
\end{pmatrix*} \equiv 0 \Mod{\q}. 
\end{align*}

The solution set is $\{(0,0)\}$ and hence the number of points is just 1.

\bigskip 

\textbf{Three coordinates are zero and the other one is non-zero} 

\bigskip 

There is a unique point satisfying this condition, namely $(1:0:0:0).$

\bigskip 

\item[(d)] Suppose that $q \equiv 1 \Mod{4}$ and $q \equiv 1 \Mod{3}.$ 

\bigskip 

\textbf{One coordinate is zero and all the others are non-zero} 

\bigskip 

The first case is $x_0=0$ which has equation $x_1^4+x_2^4+x_3^4=0.$ The system is given by
\begin{align*}
\begin{pmatrix}
4 & 0 & 0 \\
0 & 4 & 0 \\
0 & 0 & 4 \\
1 & 1 & 1 
\end{pmatrix}\begin{pmatrix}
s_0 \\
s_1 \\
s_2
\end{pmatrix}\equiv 0 \Mod{\q}
\end{align*} and its solution set is \begin{align*}
S&=\{(0,0,0),\tfrac{\q}{2}(1,0,1),\tfrac{\q}{2}(0,1,1),\tfrac{\q}{2}(1,1,0),\\&\tfrac{\q}{4}(3,0,1),\tfrac{\q}{4}(1,0,3),\tfrac{\q}{4}(0,1,3),\tfrac{\q}{4}(3,1,0),\\&\tfrac{\q}{4}(1,3,0),\tfrac{\q}{4}(0,3,1),\tfrac{\q}{4}(2,1,1),\tfrac{\q}{4}(1,1,2),\\&\tfrac{\q}{4}(1,2,1),\tfrac{\q}{4}(3,2,3),\tfrac{\q}{4}(2,3,3),\tfrac{\q}{4}(3,3,2)\}.    
\end{align*} Therefore, the number of points is \begin{align*}
q-5-6(-1)^{\frac{\q}{4}}+\tfrac{3}{q}g(\tfrac{\q}{2})(g(\tfrac{\q}{4})^2+g(\tfrac{3\q}{4})^2).    
\end{align*}

The second case is $x_1=0$ whose corresponding equation is $x_2^4+x_3^4=0.$ Its system is \begin{align*}
\begin{pmatrix}
 4 & 0 \\
 0 & 4 \\
 1 & 1 
\end{pmatrix}\begin{pmatrix}
s_0 \\
s_1 
\end{pmatrix}\equiv 0 \Mod{\q}
\end{align*} and has solutions $S=\{(0,0),\tfrac{\q}{2}(1,1),\tfrac{\q}{4}(1,3),\tfrac{\q}{4}(3,1)\},$ so the number of points is $$c_{(0,0)}+c_{\tfrac{\q}{2}(1,1)}=2(1+(-1)^{\frac{\q}{4}})(q-1).$$

The third and fourth cases are symmetric and give the same number of points. So we focus only on one of them, namely $x_2=0.$ It has equation $x_0^3x_1+x_1^4+x_3^4=0$ and system \begin{align*}
\begin{pmatrix}
 3 & 0 & 0\\
 1 & 4 & 0\\
 0 & 0 & 4 \\
 1 & 1 & 1
\end{pmatrix}\begin{pmatrix}
s_0 \\
s_1 \\
s_2
\end{pmatrix}\equiv 0 \Mod{\q}
\end{align*}

The system has solutions \begin{align*}
S&=\{(0,0,0),\tfrac{\q}{2}(0,1,1),\tfrac{\q}{4}(0,3,1),\tfrac{\q}{4}(0,1,3),\tfrac{\q}{12}(4,11,9),\tfrac{\q}{6}(4,5,3),\\&\tfrac{\q}{3}(1,2,0),\tfrac{\q}{12}(8,7,9),\tfrac{\q}{12}(4,5,3),\tfrac{\q}{3}(2,1,0),\tfrac{\q}{6}(2,1,3),\tfrac{\q}{12}(8,1,3)\}    
\end{align*}and consequently the number of points is \begin{align*}
    q-5-2(-1)^{\frac{\q}{4}}&+\tfrac{1}{q}g(\tfrac{2\q}{3})g(\tfrac{5\q}{6})g(\tfrac{\q}{2})+\tfrac{1}{q}g(\tfrac{\q}{3})g(\tfrac{\q}{6})g(\tfrac{\q}{2})+\tfrac{1}{q}g(\tfrac{\q}{3})g(\tfrac{11\q}{12})g(\tfrac{3\q}{4})\\&+\tfrac{1}{q}g(\tfrac{2\q}{3})g(\tfrac{7\q}{12})g(\tfrac{3\q}{4})+\tfrac{1}{q}g(\tfrac{\q}{3})g(\tfrac{\q}{6})g(\tfrac{\q}{4})+\tfrac{1}{q}g(\tfrac{2\q}{3})g(\tfrac{\q}{12})g(\tfrac{\q}{4}).
\end{align*} 

\bigskip

\textbf{Two coordinates are zero and the other two are non-zero} 

\bigskip 

Two of the cases have no solution, namely $x_1=x_2=0$ and $x_1=x_3=0.$ Therefore, we focus on the other cases. The three cases $x_0=x_1=0,$ $x_0=x_2=0$ and $x_0=x_3=0$ are very similar and each yield $2(1+(-1)^{\frac{\q}{4}})$ points. Therefore, we show only the first of them, which is $x_0=x_1=0.$ The equation becomes $x_2^3x_3+x_3^4=0$ and the system is given by \begin{align*}
\begin{pmatrix}
 4 & 0 \\
 0 & 4 \\
 1 & 1 
\end{pmatrix}\begin{pmatrix}
s_0 \\
s_1 
\end{pmatrix}\equiv 0 \Mod{\q}.
\end{align*} The solutions of the system are $\{(0,0),\tfrac{\q}{2}(1,1), \tfrac{\q}{4}(1,3),\tfrac{\q}{4}(3,1)\}$ and hence the number of points is $2(1+(-1)^{\frac{\q}{4}}).$ The last case is $x_2=x_3=0.$ It has equation $x_0^3x_1+x_1^4=0$ and system
\begin{align*}
\begin{pmatrix*}
3 & 0 \\
1 & 4 \\
1 & 1 
\end{pmatrix*}\begin{pmatrix*}
s_0 \\
s_1
\end{pmatrix*} \equiv 0 \Mod{\q}. 
\end{align*} The solution set is $\{(0,0),\tfrac{\q}{3}(2,1),\tfrac{\q}{3}(1,2)\}$ and the number of points is 3.

\bigskip

\textbf{Three coordinates are zero and the other one is non-zero} 

\bigskip 

\noindent Again, the only point that satisfies this condition is $(1:0:0:0).$

\bigskip 

Therefore, \begin{align*}
    \#X_{\psi}(\F_q)-\#U_{\psi}(\F_q)&=(5+2(-1)^{\frac{\q}{4}})q-7-6(-1)^{\frac{\q}{4}}+\tfrac{3}{q}g(\tfrac{\q}{2})(g(\tfrac{\q}{4})^2+g(\tfrac{3\q}{4})^2)\\&+\tfrac{2}{q}g(\tfrac{\q}{3})g(\tfrac{11\q}{12})g(\tfrac{3\q}{4})+\tfrac{2}{q}g(\tfrac{2\q}{3})g(\tfrac{5\q}{6})g(\tfrac{\q}{2})+\tfrac{2}{q}g(\tfrac{2\q}{3})g(\tfrac{7\q}{12})g(\tfrac{3\q}{4})\\&+\tfrac{2}{q}g(\tfrac{\q}{3})g(\tfrac{5\q}{12})g(\tfrac{\q}{4})+\tfrac{2}{q}g(\tfrac{\q}{3})g(\tfrac{\q}{6})g(\tfrac{\q}{2})+\tfrac{2}{q}g(\tfrac{2\q}{3})g(\tfrac{\q}{12})g(\tfrac{\q}{4}).   
    \end{align*}
And using Lemma \ref{lemma: JacobiGauss}, we reformulate the previous as \begin{align*}
    \#X_{\psi}(\F_q)-\#U_{\psi}(\F_q)&=(5+2(-1)^{\frac{\q}{4}})q-7-6(-1)^{\frac{\q}{4}}+2\sum_{i \in (\Z/12\Z)^{\times}}J_q\left(\tfrac{i}{12},\tfrac{i}{4}\right)\\&+3\left(J_q\left(\tfrac{1}{4},\tfrac{1}{4}\right)+J_q\left(\tfrac{3}{4},\tfrac{3}{4}\right)\right)+2\left(J_q\left(\tfrac{1}{2},\tfrac{1}{3}\right)+J_q\left(\tfrac{1}{2},\tfrac{2}{3}\right)\right).   
    \end{align*}
\end{itemize}

Combining Lemmas \ref{lemma: lemmainsidec2f2} and \ref{lemma: lemmaoutsidec2f2}, we obtain the following point count for $\Csf_2\Fsf_2.$ 

\begin{prop}\label{prop: propc2f2}
If $q \not \equiv 1 \Mod{4},$ then \begin{align*}
\#X_{\psi}(\F_q)=q^2+1+H_q(\bm{\alpha}_{\clubsuit_0},\bm{\beta}_{\clubsuit_0}\mid t_{\clubsuit_0})-qH_q(\bm{\alpha}_{\clubsuit_1},\bm{\beta}_{\clubsuit_1}\mid t_{\clubsuit_1}).    
\end{align*}
If $q \equiv 1 \Mod{4},$ then \begin{align*}
\#X_{\psi}(\F_q)&=q^2+2(1+(-1)^{\frac{\q}{4}})q+1+H_q(\bm{\alpha}_{\clubsuit_0},\bm{\beta}_{\clubsuit_0}\mid t_{\clubsuit_0})+qH_q(\bm{\alpha}_{\clubsuit_1},\bm{\beta}_{\clubsuit_1}\mid t_{\clubsuit_1})+\\&(-1)^{\frac{\q}{4}}q(F_q((4,2,3,3,-12,1,-1),(0,-1,0,1,0,0,0)),4 \mid t_{\clubsuit_2})\\&+F_q((4,2,3,3,-12,1,-1),(0,1,0,-1,0,0,0),4 \mid t_{\clubsuit_3})). 
\end{align*}   
\end{prop}

\end{proof}

\subsubsection{The remaining families}

For the pencils $\Csf_3\Fsf_1,\Csf_2\Lsf_2$ and $\Csf_2\Csf_2,$ using similar techniques, we can also obtain formulas for the point counts. An important detail to notice is that for $\Csf_2\Lsf_2$ and $\Csf_2\Csf_2,$ when identifying the hypergeometric sums $H_q(\bm{\alpha}_{\clubsuit_4},\bm{\beta}_{\clubsuit_4}\mid t_{\clubsuit_4})$ and $H_q(\bm{\alpha}_{\symking_1},\bm{\beta}_{\symking_1}\mid t_{\symking_1})$ respectively, we perform repeated applications of the Hasse-Davenport relation and use Lemma \ref{hd} to simplify the expressions. Apart from this observation, the procedure is similar to $\Csf_4$ and $\Csf_2\Fsf_2.$ 

\begin{prop}\label{prop: remainingpc}
Let $\psi \in \F_q,$ then
\begin{align*}
    \#X_{\Csf_3\Fsf_1,\psi}(\F_q)=q^2+q+1+q\begin{cases}
3 &\text{ if } q \equiv 1 \Mod{8}\\
-1 &\text{ if } q\not \equiv 1 \Mod{8} 
\end{cases}+H_q(\bm{\alpha}_{\spadesuit_0},\bm{\beta}_{\spadesuit_0}\mid t_{\spadesuit_0}).
\end{align*}

\begin{align*}
\#X_{\Csf_2\Lsf_2,\psi}(\F_q)=q^2+2q+1&+4q\delta[q \equiv 1 \Mod{3}]+2q\delta[q \equiv 1 \Mod{4}]\\&+H_q(\bm{\alpha}_{\clubsuit_0},\bm{\beta}_{\clubsuit_0}\mid t_{\clubsuit_0})+\omega(-12\psi)^{\frac{\q}{2}}qH_q(\bm{\alpha}_{\clubsuit_4},\bm{\beta}_{\clubsuit_4}\mid t_{\clubsuit_4}).    
\end{align*}
\begin{align*}
\#X_{\Csf_2\Csf_2,\psi}(\F_q)=q^2+6q+1+&H_q(\bm{\alpha}_{\symking_0},\bm{\beta}_{\symking_0}\mid t_{\symking_0})+\omega(-6\psi)^{\frac{\q}{2}}qH_q(\bm{\alpha}_{\symking_1},\bm{\beta}_{\symking_1}\mid t_{\symking_1})\\+\delta[q \equiv 1 \Mod{3}]q\big(&F_q((2,1,2,1,-6),(0,0,1,-1,0),3\mid t_{\symking_3})\\+&F_q((2,1,2,1,-6),(0,0,-1,1,0),3\mid t_{\symking_3})\big)\\+\delta[q \equiv 1 \Mod{3}]\omega(-6\psi)^{\frac{\q}{2}}q\big(&F_q((2,1,2,1,-6),(3,0,1,-1,3),6\mid t_{\symking_3})\\+&F_q((2,1,2,1,-6),(3,0,-1,1,3),6\mid t_{\symking_3})\big).
\end{align*}
\end{prop}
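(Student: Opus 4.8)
The plan is to follow the three-step recipe of \S~\ref{S: step}, mirroring the detailed treatment of $\Csf_4$ and $\Csf_2\Fsf_2$. For each of the three families I would first compute the Smith Normal Form of the defining matrix $A$ and solve the linear system \eqref{sys:sys1}, writing the character set as a finite disjoint union $S=\bigcup_i S_i$ in which each $S_i$ is parametrized by a single variable $\ell\in\Z/\q\Z$. For $\Csf_3\Fsf_1$ the symmetry group is trivial and $S$ has a single component; for $\Csf_2\Lsf_2$ a second component appears exactly when $4\mid\q$, carrying a $\tfrac{\q}{2}$-shift; for $\Csf_2\Csf_2$ additional components appear when $3\mid\q$, some of them also carrying a $\tfrac{\q}{2}$-shift. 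This is the combinatorial origin of the indicators $\delta[q\equiv 1\Mod 3]$, $\delta[q\equiv 1\Mod 4]$ (resp.\ $\delta[q\equiv 1\Mod 8]$ for $\Csf_3\Fsf_1$) and of the character twists $\omega(-12\psi)^{\q/2}$ and $\omega(-6\psi)^{\q/2}$ appearing in the statement.

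Next, for the maximal torus I would substitute the coefficients $c_s$ from \eqref{c:c} into Koblitz's formula (Theorem~\ref{thm:Kob}) on each $S_i$, peel off the finitely many ``special'' values of $m$ --- those for which $e^{2\pi i m/\q}$ is a root of the relevant gcd polynomial, equivalently $s(m)>0$ in the sense of Definition~\ref{def: def2} --- and recognize what remains as a hypergeometric sum $H_q(\bm{\alpha},\bm{\beta}\mid t)$ or, for the pieces of $\Csf_2\Csf_2$ not defined over $\Q$, as a gamma-triple sum $F_q$ in the sense of Definition~\ref{AA}. The contributions of the lower-dimensional tori (one, two, or three coordinates vanishing) are then handled by the same Koblitz argument applied to the induced subsystems, exactly as in Lemmas~\ref{lemma: lemmac4} and \ref{lemma: lemmaoutsidec2f2}; these supply the ``polynomial'' terms $q^2+q+1$, $q^2+2q+1$, $q^2+6q+1$ and, when $3\mid\q$, extra Gauss-sum terms coming from torsion solutions that must cancel against special terms from the torus count.

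The main obstacle is the identification of the twisted pieces for $\Csf_2\Lsf_2$ and $\Csf_2\Csf_2$. For $\Csf_2\Lsf_2$ the sum over the shifted component produces products of five Gauss sums of the shape $g(-\alpha m)g(-\beta m+\tfrac{\q}{2})\cdots g(12m)$ that do not match $H_q(\bm{\alpha}_{\clubsuit_4},\bm{\beta}_{\clubsuit_4}\mid t_{\clubsuit_4})$ term by term; one must apply the Hasse--Davenport relation (Lemma~\ref{hd}(c)) repeatedly, together with $g(m)g(-m)=(-1)^m q$ and $g(\tfrac{\q}{2})=(-1)^{\q/2}q$, to collapse the half-integer shifts into a single multiplicative character evaluated at a power of $\psi$ --- this is precisely where the factor $\omega(-12\psi)^{\q/2}$ is manufactured. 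The analogue for $\Csf_2\Csf_2$ is more delicate: the relevant parameter collections are defined only over $\Q(\sqrt{-3})$ and $\Q(\zeta_6)$, so one must track the roots-of-unity twists $\zeta_N^{\delta_i}$ through the Hasse--Davenport rewriting and check that the outcome is exactly the gamma-triple sum attached to $((2,1,2,1,-6),(0,0,1,-1,0),3)$ or $((2,1,2,1,-6),(3,0,1,-1,3),6)$, with the Galois-conjugate summands accounted for by Lemma~\ref{lemma: galoisaction}. Finally I would assemble the torus and boundary counts, split on $q\bmod 3$, $q\bmod 4$ (resp.\ $q\bmod 8$), verify that every non-polynomial auxiliary Gauss-sum term cancels between $\#U_\psi(\F_q)$ and $\#X_\psi(\F_q)-\#U_\psi(\F_q)$, and --- as with the earlier families --- confirm the resulting closed formulas by direct computation.
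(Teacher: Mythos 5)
Your proposal is correct and follows essentially the same route as the paper: the authors prove Proposition~\ref{prop: remainingpc} by exactly the Koblitz/character-decomposition recipe of \S~\ref{S: step} used for $\Csf_4$ and $\Csf_2\Fsf_2$, and they single out the same key technical point you identify, namely the repeated use of the Hasse--Davenport relation (Lemma~\ref{hd}) to collapse the $\tfrac{\q}{2}$-shifted Gauss-sum products into $\omega(-12\psi)^{\q/2}qH_q(\bm{\alpha}_{\clubsuit_4},\bm{\beta}_{\clubsuit_4}\mid t_{\clubsuit_4})$ and $\omega(-6\psi)^{\q/2}qH_q(\bm{\alpha}_{\symking_1},\bm{\beta}_{\symking_1}\mid t_{\symking_1})$, with the pieces not defined over $\Q$ captured by gamma-triple sums in conjugate pairs.
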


\section{L-series}\label{S:Lseries}

In this section, we use our finite field hypergeometric point count formulas to prove Theorem~\ref{T:hypergeometricMatch}, which shows that the hypergeometric parameters in the finite field formulas match parameters in the Picard-Fuchs equations obtained in \S~\ref{S:PFequations}. We then apply the formulas to prove our \hyperref[T:main]{Main Theorem}. The Main Theorem shows that the $L$-function of each of the five Delsarte pencils decomposes into smaller factors associated to hypergeometric sums and to Dedekind zeta and $L$-functions. The proof follows from a direct manipulation of the explicit formula for the point counts given in \S~\ref{S:counting}.

\subsection{Hypergeometric parameter match}

Before we prove our main results, we come back to the relation between periods and point counts mentioned in the beginning of the introduction. We observe that a similar instance of that relation is manifested in the pencils of Delsarte hypersurfaces studied here. Let $\bullet$ denote one of the five families in Table ~\ref{table: introDelsarteList}. 

\begin{thm}\label{T:hypergeometricMatch}
There is a subspace $H^2_{\rm hyp}(X_{\bullet,\psi})$ of $H^2_{\text{prim}}(X_{\bullet,\psi},\C)$ of dimension $d_{\bullet}$ spanned by forms $\omega_{0,\psi}, \dots, \omega_{m_\bullet,\psi}$ and their derivatives with respect to $\psi$ such that each $\omega_{i,\psi}$ satisfies a hypergeometric Picard-Fuchs differential equation. Here $\omega_{0,\psi}=\omega_{\psi}$ is the holomorphic form of $X_{\bullet,\psi}$ and ${m_\bullet}$ is an integer depending on the family. Let $M_\bullet$ be the integer 2 for $\Csf_2\Lsf_2$, $\Csf_3\Fsf_1$, and $\Csf_4$, 4 for $\Csf_2\Fsf_2$, and 6 for $\Csf_2\Csf_2$. Then we may choose the generators $\omega_{i,\psi}$ such that when $q \equiv 1 \Mod{M_\bullet}$, they are in one-to-one correspondence with finite field hypergeometric summands of the form $H_q(\bm{\alpha},\bm{\beta} \mid t)$ in $\#X_{\bullet,\psi}(\F_q).$ 
More precisely, for each summand $H_q(\bm{\alpha},\bm{\beta} \mid t)$ in $\#X_{\bullet,\psi}(\F_q)$, we have a family of generators $\omega_{\psi,i}$ such that $ \psi \mapsto \int_{\gamma_{\psi}}\omega_{\psi,i}$ is a local solution at $\psi=\infty$ of $D(\bm{\alpha},\bm{\beta} \mid t)$ for some cycle $\gamma_{\psi} \in H_2(X_{\bullet,\psi},\C)$. Here the parameters $\bm{\alpha},\bm{\beta}$ appearing in $H_q(\bm{\alpha},\bm{\beta} \mid t)$ and $D(\bm{\alpha},\bm{\beta} \mid t)$ are the same for each summand-generator pair.
\end{thm}

\begin{proof}
We take $M_\bullet$ to be the order of the group of symmetries of the pencil, as given in Table~\ref{table: classif}, if the group is non-trivial; when the group of symmetries is trivial, we simply impose the condition that $q$ is odd.
Theorem~\ref{T:hypPeriodList} summarizes the hypergeometric parameters for the Picard-Fuchs differential equations. On comparing with Propositions~\ref{prop: propc4}, \ref{prop: propc2f2}, and \ref{prop: remainingpc}, which describe the point counts over finite fields, the desired theorem follows.
\end{proof}

\subsection{$L$-functions of K3 surfaces} 

Following \cite{HD20}, we define $L$-series of K3 surfaces and hypergeometric pairs, and extend the definition to include gamma triples as defined in \cite{AA25}. 
Let $\psi\in\mathbb{Q}\setminus \{0,1\}$ and let $\bullet \in \{\Csf_2\Fsf_2, \Csf_2\Lsf_2,\Csf_2\Csf_2,\Csf_3\Fsf_1,\Csf_4\}$ indicate one of the K3 pencils in Table \ref{table: introDelsarteList}. Recall that $S(\bullet,\psi)$ consists of the set of bad primes in Table \ref{table: introDelsarteList} together with those primes that divide the numerator or denominator of either $t$ or $t-1,$ where $t$ is as in Table \ref{table: alphaBetaSubscripts}.

Let $p\notin S(\bullet,\psi)$; then the zeta function of $X_{\bullet, \psi}$ over $\mathbb{F}_p$ is of the form
 \[
 Z_p(X_{\bullet,\psi},T)=\dfrac{1}{(1-T)(1-pT)(1-p^2T)P_{\bullet,\psi,p}(T)}
 \] where $P_{\bullet,\psi,p}(T)\in 1+T\mathbb{Z}[T]$ and has degree 21. For each pencil $\bullet$ and parameter $\psi$, we may thus define an incomplete $L$-series 
 \begin{equation}
 L_{S(\bullet,\psi)}(X_{\bullet,\psi},s):=\prod_{p\notin S(\bullet,\psi)}P_{\bullet, \psi, p}(p^{-s})^{-1}.
 \end{equation}
The incomplete $L$-series $L_{S(\bullet,\psi)}(X_{\bullet,\psi},s)$ is convergent for $s\in\mathbb{C}$ in a right half-plane.

\bigskip 

\subsection{L-functions of hypergeometric pairs and gamma triples}
\label{sub: subL}
Let $\bm{\alpha},\bm{\beta}$ be multisets of $d$ rational numbers and suppose that $\alpha_i-\beta_j \not \in \mathbb{Z}$ for any pair $i,j \in \{1,\dots,d\}$ and $t \in \mathbb{Q} \setminus \{0,1\}.$ We define $S(\bm{\alpha},\bm{\beta},t)$ to be the set of primes dividing a denominator in $\bm{\alpha} \cup \bm{\beta}$ or the numerator or denominator of $t$ or $t-1,$ which we will call ``bad primes'' for the associated hypergeometric function $H(\bm{\alpha},\bm{\beta}\mid t)$. 

\begin{definition}[\cite{HD20}]\label{def: Lq}
 For a prime power $q$ and a pair $\bm{\alpha},\bm{\beta}$ such that $q$ is good for the pair, we define the formal series
 \[
 L_q(H(\bm{\alpha},\bm{\beta}\mid t),T):=\exp\left(-\sum_{r=1}^{\infty}H_{q^r}(\bm{\alpha},\bm{\beta}\mid t)\dfrac{T^r}{r}\right)\in 1+TK_{\bm{\alpha},\bm{\beta}}[[T]]. 
 \]  

\begin{remark}
  Using Table \ref{table: introDelsarteList} and Table \ref{table: alphaBetaSubscripts}, we notice that the bad primes for the K3 surface are contained in the set of bad primes for the associated hypergeometric functions. Thus, we will refer to the latter when discussing ``bad primes''. 
\end{remark} 

 For a gamma triple $(\bm{\gamma},\bm{\delta}, N),$ we define  
 $$L_q(F(\bm{\gamma},\bm{\delta}, N\mid t),T):=\exp\left(-\sum_{r=1}^{\infty}\delta[q^r \equiv 1 \Mod{N}]F_{q^r}(\bm{\gamma},\bm{\delta}, N\mid t)\dfrac{T^r}{r}\right)\in 1+T\Q(\zeta_N)[[T]].$$

\noindent (Notice that if $N \nmid q^r,$ then $F_{q^r}(\bm{\gamma},\bm{\delta}, N\mid t)$ is not defined, but $\delta[q^m \equiv 1 \Mod{N}]=0.$)

 If $\bm{\alpha},\bm{\beta}$ are defined over $\Q,$ define \begin{align}\label{al: defQ}
L_S(H(\bm{\alpha},\bm{\beta}\mid t),s):=\prod_{p \notin S}L_p(H(\bm{\alpha},\bm{\beta}\mid t),p^{-s})^{-1}.    
 \end{align} 

 \end{definition}
 
 We now follow Section 4 of \cite{HD20} to define $L$-functions for the hypergeometric sum $F_q.$ Let $M=\Q(\zeta_N).$ Observe that this is a particular case of the description in \cite[Section 4]{HD20}, where they allow $M$ to be a more general extension of $\Q.$ Consider the identification $(\Z/N\Z)^{\times} \simeq \text{Gal}(\Q(\zeta_N) \mid \Q)$ given by $k \mapsto \sigma_k$ and $\sigma_k(\zeta_N)=\zeta_N^k.$ In particular, in the notation of \cite[Section 4]{HD20}, $H_M$ in our case is just the trivial group. 

Fix a prime $p \notin S(\bm{\alpha},\bm{\beta},t).$ Denote by $\mathfrak{p}_1,\dots,\mathfrak{p}_r$ the primes above $p$ in $\Z[\zeta_N]$ and $p^f$ the norm of each of those primes (it is well defined because the extension is Galois, so every prime above $p$ has the same norm). If $p \nmid N,$ that is, if the prime $p$ does not ramify, then $f$ is the multiplicative order of $p$ modulo $N$ and $p$ splits into $\varphi(N)/f$ distinct primes, that is, $r=\varphi(N)/f.$ Therefore, the primes $\mathfrak{p}_i$ are indexed by the elements of the group $(\Z/N\Z)^{\times}/\langle p \rangle.$ Bearing in mind this notation, we have the following definition:
 
\begin{definition}\label{def: Lp}
 Define the $L$-factor of the gamma triple $(\bm{\gamma},\bm{\delta},N)$ at $p$ and the incomplete $L$-function respectively as \begin{align}\label{alg: Lpp}
 L_p(F(\bm{\gamma},\bm{\delta},N \mid t),\Q(\zeta_N),T)=\prod_{k_i \in (\Z/N\Z)^{\times}/\langle p\rangle}L_{p^f}(F(\bm{\gamma},k_i\bm{\delta}, N\mid t),T^f) 
 \end{align}  \begin{align}\label{alg: LprodF}
 L_S(F(\bm{\gamma},\bm{\delta},N \mid t),\Q(\zeta_N),s)&=\prod_{p \notin S}L_p(F(\bm{\gamma},\bm{\delta},N \mid t),\Q(\zeta_N),p^{-s})^{-1}.    
 \end{align}  
 \end{definition}

As we will see in the computations, we also need to consider twists by characters. For a prime $p$ and $a\in \F_p^{\times},$ define $\phi_{a}(p)=(\frac{a}{p})$ (that is, the Legendre symbol of $a$ modulo $p$) and extend it multiplicatively for powers of $p.$ For example, when $a=-1,$ we have $\phi_{-1}(p^l)=(\frac{-1}{p})^l=(-1)^{\frac{p^l-1}{2}}.$

For a prime $p$ and a prime ideal $\mathfrak{p} \in \mathrm{Spec}(\Z[i])$ above $p,$ we have $\mathrm{Nm}(\mathfrak{p}) \equiv 1 \Mod{4}.$ Therefore, the following function is well defined and depends only on the norm of $\mathfrak{p}$  $$\phi_{\sqrt{-1}}(\mathfrak{p})=(-1)^{\frac{\mathrm{Nm}(\mathfrak{p})-1}{4}}.$$ 

Now, recall that in Lemma \ref{lemma:char} we proved that for a prime power $q \equiv 1 \Mod{4},$ we have $\omega(2)^{\frac{\q}{2}}=(-1)^{\frac{\q}{4}},$ where $\omega$ is a generator of the character group of $\F_q^{\times}.$ In this context, denoting by $q$ the norm of $p,$ we write $$\phi_{\sqrt{-1}}(q)=(-1)^{\frac{\q}{4}}.$$

Finally, expanding \eqref{al: defQ} as a Dirichlet series, we can define the twisting by a character as done in \cite[(4.1.16)]{HD20}. Indeed, we may write $L_S(H(\bm{\alpha},\bm{\beta}\mid t),s)=\sum_{n \in \Z}\frac{a_n}{n^s}$ and for a Dirichlet character $\chi: \Z \to \C$ define $$L_S(H(\bm{\alpha},\bm{\beta}\mid t),s,\chi)=\sum_{n=1}^{\infty}\frac{a_n\chi(n)}{n^s}.$$ 

We can also expand \eqref{alg: LprodF} as a Dirichlet series in the following way 
    $$\sum_{\substack{\mathfrak{n} \subset \Z[\zeta_N] \\ \mathfrak{n} \neq (0)}}\frac{a_{\mathfrak{n}}}{\text{Nm}(\mathfrak{n})^s}.$$ Given a finite order Dirichlet character $\chi$ over $\Q(\zeta_N),$ we define \begin{align*}L_S(H(\bm{\alpha},\bm{\beta}\mid t),\mathbb{Q}(\zeta_N),s,\chi)=\sum_{\substack{\mathfrak{n} \subset \Z[\zeta_N] \\ \mathfrak{n} \neq (0)}}\frac{\chi(\mathfrak{n}) a_{\mathfrak{n}}}{\mathrm{Nm}(\mathfrak{n})^s}.    
\end{align*} 

\subsection{Computing the L-functions}
Now we compute the incomplete $L$-functions of the five Delsarte pencils according to the definitions given in the previous section. 

\begin{theorem}

\begin{align*}
&L_S(X_{\Csf_4,\psi},s)=\zeta_{S,\Q(i\sqrt{3})}(s-1)\zeta_{S}(s-1)\cdot L_S\left(H(\bm{\alpha}_{\heartsuit_0},\bm{\beta}_{\heartsuit_0}\mid t_{\heartsuit_0}),s\right).
\end{align*} 
\end{theorem}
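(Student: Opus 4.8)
The plan is to translate the point-count formula of Proposition~\ref{prop: propc4} into a statement about the local zeta factors $P_{\Csf_4,\psi,p}(T)$, and then to recognize the three resulting pieces as the local factors of $\zeta_{S,\Q(i\sqrt{3})}(s-1)$, $\zeta_S(s-1)$, and $L_S(H(\bm{\alpha}_{\varheartsuit_0},\bm{\beta}_{\varheartsuit_0}\mid t_{\varheartsuit_0}),s)$ respectively. First I would recall that for a K3 surface over $\F_q$ one has
\[
Z_q(X_{\Csf_4,\psi},T)=\frac{1}{(1-T)(1-qT)(1-q^2T)P_{\Csf_4,\psi,q}(T)},
\]
so that $\#X_\psi(\F_q)$ determines $P_{\Csf_4,\psi,q}$ through the standard identity $\#X_\psi(\F_q)=1+q+q^2+(\text{coefficient contribution from }P)$; more precisely, writing $P_{\Csf_4,\psi,q}(T)=\prod_i(1-\gamma_i T)$ one has $\#X_\psi(\F_{q^r})=1+q^r+q^{2r}+\sum_i\gamma_i^r$. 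By Proposition~\ref{prop: propc4}, $\#X_\psi(\F_q)=q^2+2q+1+2q\,\delta[q\equiv 1\Mod 3]+H_q(\bm{\alpha}_{\varheartsuit_0},\bm{\beta}_{\varheartsuit_0}\mid t_{\varheartsuit_0})$, so we read off
\[
\sum_i\gamma_i^r = q^r+2q^r\delta[q^r\equiv 1\Mod 3]+H_{q^r}(\bm{\alpha}_{\varheartsuit_0},\bm{\beta}_{\varheartsuit_0}\mid t_{\varheartsuit_0}).
\]

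The second step is to identify each summand with an Euler factor. The term $H_{q^r}(\bm{\alpha}_{\varheartsuit_0},\bm{\beta}_{\varheartsuit_0}\mid t_{\varheartsuit_0})$ is, by the definition of $L_q(H(\cdot)\mid\cdot),T)$ via $\exp(-\sum_r H_{q^r}T^r/r)$, exactly the contribution of the hypergeometric $L$-factor; since the parameters $\bm{\alpha}_{\varheartsuit_0},\bm{\beta}_{\varheartsuit_0}$ are defined over $\Q$ (Proposition~\ref{prop: propc4}), this piece contributes $L_S(H(\bm{\alpha}_{\varheartsuit_0},\bm{\beta}_{\varheartsuit_0}\mid t_{\varheartsuit_0}),s)$ after taking the product over $p\notin S$ and substituting $T=p^{-s}$. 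The term $q^r$ has generating function $\exp(\sum_r q^r T^r/r)=(1-qT)^{-1}$, which upon Euler-producting gives $\zeta_S(s-1)$. The term $2q^r\delta[q^r\equiv 1\Mod 3]$ requires the standard identity: for $p\neq 3$, $2\sum_{r\geq 1}q^r\delta[q^r\equiv 1\Mod 3]T^r/r = -\log\big((1-qT)(1-\chi_{-3}(p)qT)\big)$ where $\chi_{-3}$ is the quadratic character of $\Q(\sqrt{-3})=\Q(i\sqrt 3)$ — equivalently, this is the generating function of the local factor of $\zeta_{\Q(i\sqrt 3)}(s-1)\zeta(s-1)^{-1}\cdot\zeta(s-1)$... more directly, $\zeta_{\Q(i\sqrt3)}(s-1)=\zeta(s-1)L(s-1,\chi_{-3})$ and $\zeta_{\Q(i\sqrt3),S}(s-1)\zeta_S(s-1)$ has local factor at $p$ with trace-of-Frobenius contribution $q^r+q^r+q^r\chi_{-3}(p)^r$; restricting to $p$ where $3$ splits ($q\equiv1\Mod3$) this is $3q^r$... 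I would reconcile the bookkeeping so that $q^r+2q^r\delta[q^r\equiv1\Mod3]$ matches exactly the combined local contribution of $\zeta_{S,\Q(i\sqrt3)}(s-1)\zeta_S(s-1)$; a short computation with the splitting behavior of $3$ (split iff $q\equiv1\Mod3$, inert iff $q\equiv2\Mod3$) confirms this, since $\#\{$primes above $p$ in $\Q(i\sqrt3)$ with norm $q\}$ contributes $2q^r$ when split and $0$ when inert (giving $q^{2r}$, hence no linear-in-$q$ term, when inert), matching the indicator exactly.

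Putting the three pieces together and taking $\exp(-\sum_r(\cdots)T^r/r)$, one obtains $P_{\Csf_4,\psi,q}(T)$ as the product of the corresponding local polynomials, hence after forming the Euler product over $p\notin S(\Csf_4,\psi)$ and setting $T=p^{-s}$ we get the claimed factorization. The main obstacle I anticipate is the careful bookkeeping in the third step: one must verify that the $\delta[q\equiv1\Mod 3]$ term genuinely corresponds to the Dedekind zeta factor of $\Q(i\sqrt3)$ rather than, say, a Dirichlet $L$-function twist, and in particular confirm the \emph{weight shift} $s\mapsto s-1$ is correct — that is, that these algebraic classes are Tate classes contributing $q$ (not $1$ or $q^2$) to the Frobenius trace, consistent with the Hodge-theoretic expectation that the non-hypergeometric part of $H^2(X_\psi)$ is spanned by algebraic curves (cf.\ the discussion of $C$ and $W_\psi$ following Table~\ref{table: kloost}, where $\dim C=3$ for $\Csf_4$ and $\dim W_\psi=0$, so the full $21$-dimensional $P$ should be (degree $18$ hypergeometric) $\times$ (degree $3$ from Tate classes), and $18+3=21$ as required). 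The degree count — $18$ from the $_{18}F_{17}$ hypergeometric factor plus $1+2=3$ from $\zeta_S(s-1)$ and the $\Q(i\sqrt3)$-part — provides a useful consistency check that the factorization is complete.
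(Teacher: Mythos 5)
Your proposal is correct and follows essentially the same route as the paper: cancel the hyperplane-section factor against $1+q+q^2$ (plus the extra $q$ giving $\zeta_S(s-1)$), match $H_{q^r}$ with the hypergeometric Euler factor, and identify $2q^r\delta[q^r\equiv 1\Mod 3]$ with the local factors of $\zeta_{S,\Q(i\sqrt{3})}(s-1)$ via the splitting of $p$ in $\Q(\sqrt{-3})$, exactly as the paper does by the case analysis $p\equiv 1$ versus $p\equiv 2\pmod 3$. Your closing degree count $18+3=21$ is a sensible consistency check not spelled out in the paper's proof, but the argument itself is the same.
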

\begin{proof}
By definition, $L_{S}(X_{\Csf_4,\psi},s)=\prod_{p \notin S}P_{\psi}(p^{-s})^{-1}.$ We also have \begin{align*}
\frac{1}{P_{\psi}(T)}&=(1-T)(1-qT)(1-q^2T)Z(X_{\Csf_4,\psi},T)=(1-T)(1-qT)(1-q^2T)\sum_{m=1}^{\infty}\#X_{\Csf_4,\psi}(\F_{q^m})\frac{T^m}{m}.    
\end{align*} 

We proved that \begin{align*}
\#X_{\Csf_4,\psi}(\F_q)=q^2+2q+1+2q\delta[q\equiv 1 \Mod 3]+H_q(\bm{\alpha}_{\heartsuit_0},\bm{\beta}_{\heartsuit_0}\mid t_{\heartsuit_0}).  
\end{align*}

The factor $(1-T)(1-qT)(1-q^2T),$ which corresponds to the hyperplane section, cancels, and we obtain the following: \begin{align}\label{al: algn}
\frac{1}{P_{\psi}(p^{-s})}=&\exp\left(\sum_{m=1}^{\infty}2\delta[p^m \equiv 1 \Mod 3]\frac{p^{(1-s)m}}{m}\right)\exp\left(\sum_{m=1}^{\infty}\frac{p^{(1-s)m}}{m}\right) \exp\left(\sum_{m=1}^{\infty}H_{p^m}(\bm{\alpha}_{\heartsuit_0},\bm{\beta}_{\heartsuit_0}\mid t_{\heartsuit_0})\frac{p^{-sm}}{m}\right).    
\end{align}

Now we compute $$\exp\left(\sum_{m=1}^{\infty}2\delta[p^m \equiv 1 \Mod 3]\frac{p^{(1-s)m}}{m}\right).$$ 

If $p \equiv 1,7 \Mod {12},$ then $p^m \equiv 1,7 \Mod {12},$ respectively and therefore $p^m \equiv 1 \Mod 3.$ So we have \begin{align*}
\exp\left(\sum_{m=1}^{\infty}2\delta[p^m \equiv 1 \Mod 3]\frac{p^{(1-s)m}}{m}\right)&=\exp\left(-2\sum_{m=1}^{\infty}\frac{p^{(1-s)m}}{m}\right)^{-1}=(1-p^{1-s})^{-2}.
\end{align*}

If $p \equiv 5,11 \Mod{12},$ then $p^{2m} \equiv 1 \Mod 3$ and $p^{2m+1} \not\equiv 1 \Mod 3.$ Therefore, \begin{align*}
\exp\left(\sum_{m=1}^{\infty}2\delta[p^m \equiv 1 \Mod 3]\frac{p^{(1-s)m}}{m}\right)=\exp \left(-\sum_{m=1}^{\infty}\frac{p^{2(1-s)m}}{m} \right)^{-1}=(1-p^{2(1-s)})^{-1}.   
\end{align*}

We conclude that \begin{align*}
\prod_{p \notin S}\exp\left(\sum_{m=1}^{\infty}2p^m\delta[p^m \equiv 1\Mod 3]\frac{p^{-sm}}{m}\right)=\zeta_{S,\Q(\sqrt{-3})}(s-1). 
\end{align*}

One can also easily see that  $$\prod_{p\notin S}\exp\left(\sum_{m=1}^{\infty}\frac{p^{(1-s)m}}{m}\right)=\zeta_S(s-1).$$

We now take the product over $p \notin S$ in equation \eqref{al: algn}, we obtain 

\begin{align*}
&L_S(X_{\Csf_4,\psi},s)=\zeta_{S,\Q(i\sqrt{3})}(s-1)\zeta_{S}(s-1)\cdot L_S\left(H(\bm{\alpha_{\heartsuit_0}},\bm{\beta}_{\heartsuit_0}\mid t_{\heartsuit_0}),s\right).
\end{align*}  
\end{proof}

\begin{thm}
\begin{align*}
L_S(X_{\Csf_2\Fsf_2,\psi},s)=&L_S(\Q(\zeta_8)|\Q,s)\cdot L_S(H(\bm{\alpha}_{\clubsuit_0},\bm{\beta}_{\clubsuit_0}\mid t_{\clubsuit_0}),s)\cdot L_S(H(\bm{\alpha}_{\clubsuit_1},\bm{\beta}_{\clubsuit_1}\mid t_{\clubsuit_1}),s-1,\phi_{-1})\\&L_S(F((4,2,3,3,-12,1,-1),(0,-1,0,1,0,0,0),4 \mid t_{\clubsuit_2}),\Q(i),s-1,\phi_{\sqrt{-1}}).    
\end{align*}   
\end{thm}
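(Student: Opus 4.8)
The plan is to run the same Euler-product-from-point-counts argument used for $\Csf_4$, starting from the explicit point count for $\Csf_2\Fsf_2$ established in Proposition~\ref{prop: propc2f2}. Recall that $Z_p(X_{\Csf_2\Fsf_2,\psi},T)$ has the form $\tfrac{1}{(1-T)(1-pT)(1-p^2T)P_\psi(T)}$, so $\tfrac{1}{P_\psi(T)} = (1-T)(1-pT)(1-p^2T)\exp\!\big(\sum_{m\ge1}\#X_{\Csf_2\Fsf_2,\psi}(\F_{p^m})\tfrac{T^m}{m}\big)$, and taking $T=p^{-s}$, multiplying over $p\notin S$, the factor $(1-T)(1-pT)(1-p^2T)$ cancels the hyperplane-section contribution $1+q+q^2$. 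I would first split the four cases (a)--(d) of Proposition~\ref{prop: propc2f2} according to $q \bmod 3$ and $q\bmod 4$, observe that the $H_q(\bm{\alpha}_{\clubsuit_0},\bm{\beta}_{\clubsuit_0}\mid t_{\clubsuit_0})$ term is common to all four cases and the $qH_q(\bm{\alpha}_{\clubsuit_1},\bm{\beta}_{\clubsuit_1}\mid t_{\clubsuit_1})$ and $(-1)^{\q/4}q(F_q(\ldots\mid t_{\clubsuit_2}) + F_q(\ldots\mid t_{\clubsuit_3}))$ terms appear (with uniform signs) precisely when $q\equiv 1\bmod 4$. The remaining non-hypergeometric leftover is exactly the term $2(1+(-1)^{\q/4})q$ that shows up only when $q\equiv 1\bmod 4$.

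The next step is to identify the three resulting Euler products. The $H_q(\bm{\alpha}_{\clubsuit_0},\bm{\beta}_{\clubsuit_0}\mid t_{\clubsuit_0})$ sum contributes $L_S(H(\bm{\alpha}_{\clubsuit_0},\bm{\beta}_{\clubsuit_0}\mid t_{\clubsuit_0}),s)$ directly by the definition of the hypergeometric $L$-function. The $qH_q(\bm{\alpha}_{\clubsuit_1},\bm{\beta}_{\clubsuit_1}\mid t_{\clubsuit_1})$ term, appearing only when $q\equiv 1\bmod4$ and with sign $(-1)^{\q/4}$, I would match with the twisted series $L_S(H(\bm{\alpha}_{\clubsuit_1},\bm{\beta}_{\clubsuit_1}\mid t_{\clubsuit_1}),s-1,\phi_{-1})$: the extra factor of $q$ shifts $s\mapsto s-1$, the quadratic character $\phi_{-1}$ (Legendre symbol at $-1$) detects the condition $q\equiv 1\bmod 4$, and the sign $(-1)^{\q/4}$ is absorbed exactly as in Lemma~\ref{lemma:char}(a) via $\phi_{-1}(p)^m$ bookkeeping. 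Similarly, the pair $F_q((4,2,3,3,-12,1,-1),(0,-1,0,1,0,0,0),4\mid t_{\clubsuit_2})$ together with its Galois conjugate $F_q((4,2,3,3,-12,1,-1),(0,1,0,-1,0,0,0),4\mid t_{\clubsuit_3})$ — which by Lemma~\ref{lemma: galoisaction} are $\sigma_{-1}$-conjugate, i.e. the $k$-orbit of $\bm{\delta}$ under $(\Z/4\Z)^\times/\langle p\rangle$ — assembles into the single $\Q(i)$-indexed $L$-factor $L_p(F((4,2,3,3,-12,1,-1),(0,-1,0,1,0,0,0),4\mid t_{\clubsuit_2}),\Q(i),T)$ by the very definition of the $L$-function of a gamma triple (product over $(\Z/N\Z)^\times/\langle p\rangle$), again with the $q$-shift giving $s-1$ and the twist $\phi_{\sqrt{-1}}$ coming from the $(-1)^{\q/4}$ sign. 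Finally the leftover $2(1+(-1)^{\q/4})q$ is handled exactly as in the $\Csf_4$ computation: when $p$ splits completely in $\Q(\zeta_8)$ one gets $(1-p^{1-s})^{-4}$-type contributions, and in general the local factor reassembles into $L_S(\Q(\zeta_8)\mid\Q,s)$ — here one must check that $1+(-1)^{\q/4}$ equals the number of $m$-th roots of unity behavior governing $\zeta_8$, i.e. that $\phi_{-1}(p)=1$ is equivalent to $p\equiv 1\bmod 4$ and the combinatorics of $2(1+(-1)^{p^m/4\cdot(p-1)})$ over $m$ matches the Dirichlet series of the abelian extension $\Q(\zeta_8)/\Q$ of degree... well, matching the constant $2$ with the appropriate multiplicity.

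Concretely the steps, in order, are: (1) write $1/P_\psi(p^{-s})$ as an exponential of the point-count generating series and cancel the $(1-T)(1-pT)(1-p^2T)$ factor; (2) substitute Proposition~\ref{prop: propc2f2}, organizing into the $\bmod\,3,\bmod\,4$ cases, and separate the purely-Tate leftover $2(1+(-1)^{\q/4})q$ from the three hypergeometric-type pieces; (3) recognize the $2(1+(-1)^{\q/4})q$ leftover as the Euler product of $\zeta_{S,\text{(relevant field)}}$ and show it equals $L_S(\Q(\zeta_8)\mid\Q,s)$ by the standard decomposition of a Dirichlet $L$-function over residue classes mod $8$; (4) recognize the $H_q(\bm{\alpha}_{\clubsuit_0},\bm{\beta}_{\clubsuit_0}\mid t_{\clubsuit_0})$ Euler product as $L_S(H(\bm{\alpha}_{\clubsuit_0},\bm{\beta}_{\clubsuit_0}\mid t_{\clubsuit_0}),s)$; (5) recognize the $(-1)^{\q/4}qH_q(\bm{\alpha}_{\clubsuit_1},\bm{\beta}_{\clubsuit_1}\mid t_{\clubsuit_1})\cdot\delta[q\equiv1\bmod4]$ piece as $L_S(H(\bm{\alpha}_{\clubsuit_1},\bm{\beta}_{\clubsuit_1}\mid t_{\clubsuit_1}),s-1,\phi_{-1})$ using the Dirichlet-series/twist formalism; (6) recognize the sum of the two $F_q$-pieces as $L_S(F((4,2,3,3,-12,1,-1),(0,-1,0,1,0,0,0),4\mid t_{\clubsuit_2}),\Q(i),s-1,\phi_{\sqrt{-1}})$ via the gamma-triple $L$-function definition and Lemma~\ref{lemma: galoisaction}; (7) multiply over $p\notin S$ and collect. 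The main obstacle I anticipate is step (6): one must verify carefully that the \emph{pair} $(F_q(\cdot\mid t_{\clubsuit_2}), F_q(\cdot\mid t_{\clubsuit_3}))$ appearing in Proposition~\ref{prop: propc2f2}(c),(d) — which are present only when $q\equiv1\bmod4$, i.e. only when $4\mid\q$ so that the gamma-triple sum is even defined — is exactly the local Euler factor $L_p(F(\ldots),\Q(i),T)$ after accounting for the inert-versus-split behavior of $p$ in $\Q(i)$ (when $p\equiv3\bmod4$, $\langle p\rangle=(\Z/4\Z)^\times$ so the product over the quotient is a single factor with $f=2$, which matches the absence of the $F_q$ terms for $q=p$ odd power issues), together with matching the $(-1)^{\q/4}$ sign against the quadratic twist $\phi_{\sqrt{-1}}$; the bookkeeping of which character twist ($\phi_{-1}$ versus $\phi_{\sqrt{-1}}$, acting on $\Q$ versus on $\Q(i)$) absorbs which sign is the delicate point, and I would check it by expanding a few Euler factors explicitly and comparing with the computational validation mentioned in the paper.
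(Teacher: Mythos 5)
Your overall route is the paper's: expand $1/P_\psi(p^{-s})$ from the point counts of Proposition~\ref{prop: propc2f2}, cancel the hyperplane factor $(1-T)(1-pT)(1-p^2T)$, and identify each remaining summand with an Euler factor --- the $\Q(\zeta_8)$ piece via the mod-$8$ case analysis (the paper simply cites \cite{HD20} for this), the $\clubsuit_0$ piece directly from the definition, and the two gamma-triple sums assembled into a single $\Q(i)$-indexed factor by the split/inert analysis in $\Q(i)$ together with Lemma~\ref{lm: Fp} for inert $p$. Your steps (1)--(4), (6) and (7) are sound as sketched, and your discussion of the inert case in step (6) is exactly the point the paper's proof spells out.

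However, step (5) as you describe it would fail, because you have misread how the $\clubsuit_1$ term enters Proposition~\ref{prop: propc2f2}. The term $qH_q(\bm{\alpha}_{\clubsuit_1},\bm{\beta}_{\clubsuit_1}\mid t_{\clubsuit_1})$ is present in \emph{all four} cases: with coefficient $-q$ in cases (a),(b) (where $q\not\equiv 1\Mod{4}$) and $+q$ in cases (c),(d) (where $q\equiv 1\Mod{4}$). Its coefficient is therefore $\phi_{-1}(q)\,q$ for every good $q$, and it is exactly this sign that the twist by $\phi_{-1}$ (together with the shift $s\mapsto s-1$ coming from the factor of $q$) encodes. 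The term does not ``appear only when $q\equiv 1\Mod{4}$,'' and its sign is not $(-1)^{\q/4}$; that sign is attached to the $F_q$ terms and is what the twist $\phi_{\sqrt{-1}}$ over $\Q(i)$ absorbs. If you form the Euler product from the coefficient $\delta[q\equiv 1\Mod{4}]\,(-1)^{\q/4}q$ as written in your step (5), you obtain a product supported only on $p\equiv 1\Mod{4}$ and twisted by a character of conductor $8$, not $L_S(H(\bm{\alpha}_{\clubsuit_1},\bm{\beta}_{\clubsuit_1}\mid t_{\clubsuit_1}),s-1,\phi_{-1})$. The fix is precisely the unified rewriting the paper performs at the start of its proof, namely $\#X_{\psi}(\F_p)=p^2+p+1+p\,(\text{$3$ or $-1$ according to $p\Mod{8}$})+H_p(\bm{\alpha}_{\clubsuit_0},\bm{\beta}_{\clubsuit_0}\mid t_{\clubsuit_0})+\phi_{-1}(p)\,p\,H_p(\bm{\alpha}_{\clubsuit_1},\bm{\beta}_{\clubsuit_1}\mid t_{\clubsuit_1})+\delta[p\equiv 1\Mod{4}]\,\phi_{\sqrt{-1}}(p)\,p\,(F_p(\cdot\mid t_{\clubsuit_2})+F_p(\cdot\mid t_{\clubsuit_3}))$, after which each summand matches its $L$-factor cleanly. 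Your appeal to Lemma~\ref{lemma:char}(a) at that point is also off target: that lemma is a Gauss-sum identity used inside the point-count computation, not a device for absorbing signs into Dirichlet twists.
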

\begin{proof}    
By Proposition \ref{prop: propc2f2}, we have  
\begin{align*}
\#X_{\Csf_2\Fsf_2,\psi}(\F_q)&=q^2+q+1+q\begin{cases}
3 \text{ if } q \equiv 1 \Mod{8}\\
-1 \text{ if } q\not \equiv 1 \Mod{8} \end{cases}+H_q(\bm{\alpha}_{\clubsuit_0},\bm{\beta}_{\clubsuit_0}\mid t_{\clubsuit_0})+\phi_{-1}(q)qH_q(\bm{\alpha}_{\clubsuit_1},\bm{\beta}_{\clubsuit_1}\mid t_{\clubsuit_1})\\&+\delta[q \equiv 1 \Mod{4}]\phi_{\sqrt{-1}}(q)q(F_q((4,2,3,3,-12,1,-1),(0,-1,0,1,0,0,0),4 \mid t_{\clubsuit_2})\\&+F_q((4,2,3,3,-12,1,-1),(0,1,0,-1,0,0,0),4 \mid t_{\clubsuit_3})).    
\end{align*}

By definition, $L_{S}(X_{\Csf_2\Fsf_2,\psi},s)=\prod_{p \notin S}P_{\psi}(p^{-s})^{-1}.$ We also have \begin{align*}
\frac{1}{P_{\psi}(T)}&=(1-T)(1-qT)(1-q^2T)Z(X_{\Csf_2\Fsf_2,\psi},T)=(1-T)(1-qT)(1-q^2T)\sum_{m=1}^{\infty}\#X_{\Csf_2\Fsf_2,\psi}(\F_{q^m})\frac{T^m}{m}.    
\end{align*} 

The factor $(1-T)(1-qT)(1-q^2T^2),$ which corresponds to the hyperplane section, cancels, so we only need to handle the remaining terms. The term \begin{align*}
q\begin{cases}
3 &\text{ if } q \equiv 1 \Mod{8} \\
-1 &\text{ if } q\not \equiv 1 \Mod{8}
\end{cases}.    
\end{align*} is handled in \cite[Proposition 4.4.1, equation (4.4.3)]{HD20}, and gives $L_{S}(\Q(\zeta_8)\mid \Q,s-1).$ 

The term $H_p(\bm{\alpha}_{\clubsuit_0},\bm{\beta}_{\clubsuit_0}\mid t_{\clubsuit_0})$ corresponds to $L_{S}(H(\bm{\alpha}_{\clubsuit_0},\bm{\beta}_{\clubsuit_0}\mid t_{\clubsuit_0}),s)$ and $\phi_{-1}(p)pH_p(\bm{\alpha}_{\clubsuit_1},\bm{\beta}_{\clubsuit_1}\mid t_{\clubsuit_1})$ to $L_{S}(H(\bm{\alpha}_{\clubsuit_4},\bm{\beta}_{\clubsuit_4}\mid t_{\clubsuit_4}),s-1,\phi_{-12}\phi_{\psi}).$

All the terms in the sums were handled before, except the last one: 

\begin{align*}
\delta[q \equiv 1 \Mod{4}]\phi_{\sqrt{-1}}(q)q(&F_q((4,2,3,3,-12,1,-1)),(0,-1,0,1,0,0,0),4 \mid t_{\clubsuit_2})\\+&F_q((4,2,3,3,-12,1,-1)),(0,1,0,-1,0,0,0),4 \mid t_{\clubsuit_3})).    
\end{align*}

Notice that this term only appears when $q \equiv 1 \Mod{4},$ so we need to investigate the powers of $p$ according to this. Here we have the extension $M=\Q(\zeta_4)=\Q(i)$ of $\Q.$  

Suppose that $p \equiv 1 \Mod 4,$ then $p$ splits into two in $\Q(i)$ and each of them has norm $p.$ Notice that the group action in this case is given by \begin{align*}
(\Z/4\Z)^{\times}/\langle p \rangle=\Z/2\Z.   
\end{align*} Therefore, \begin{align*}
\exp&\bigg(-\sum_{m=1}^{\infty}\phi_{\sqrt{-1}}(p^m)p^mF_{p^m}((4,2,3,3,-12,1,-1),(0,-1,0,1,0,0),4 \mid t_{\clubsuit_2})\tfrac{p^{-sm}}{m}\\&-\sum_{m=1}^{\infty}\phi_{\sqrt{-1}}(p^m)p^mF_{p^m}((4,2,3,3,-12,1,-1)),(0,1,0,-1,0,0),4 \mid t_{\clubsuit_2})\tfrac{p^{-sm}}{m}\bigg)\\=&L_p\left(F((4,2,3,3,-12,1,-1),(0,-1,0,1,0,0),4 \mid t_{\clubsuit_2}),p^{1-s},\phi_{\sqrt{-1}}\right)\\\cdot &L_p\left(F((4,2,3,3,-12,1,-1),(0,1,0,-1,0,0),4 \mid t_{\clubsuit_2}),p^{1-s},\phi_{\sqrt{-1}}\right)\\=&\prod_{k \in \Z/2\Z}L_p\left(F((4,2,3,3,-12,1,-1),k(0,-1,0,1,0,0),4 \mid t_{\clubsuit_2}),p^{1-s},\phi_{\sqrt{-1}}\right)\\=&L_p(F((4,2,3,3,-12,1,-1),(0,-1,0,1,0,0),4 \mid t_{\clubsuit_2}),\Q(i),p^{1-s},\phi_{\sqrt{-1}}).  
\end{align*} 

Suppose that $p \equiv 3 \Mod 4,$ then $p$ is inert in $\Q(i)$ and has inertia degree equal to $f=2.$ The group action in this case is trivial as \begin{align*}
(\Z/4\Z)^{\times}/\langle p \rangle=(\Z/2\Z)/(\Z/2\Z) \text{ is trivial. }    
\end{align*} Moreover, the odd powers of $p$ are 3 $\Mod{4}$ and the even powers are equivalent to $1 \Mod{4}.$ For $q=p^{2m},$ by Lemma \ref{lm: Fp} we have $$F_q((4,2,3,3,-12,1,-1),(0,-1,0,1,0,0),4 \mid t_{\clubsuit_2})=F_q((4,2,3,3,-12,1,-1),(0,1,0,-1,0,0),4 \mid t_{\clubsuit_2})$$ and here we have $t \in \F_p,$ so $t^p=t.$ Therefore, \begin{align*}
\exp&\bigg(-\sum_{m=1}^{\infty}\phi_{\sqrt{-1}}(p^{2m})p^{2m}F_{p^{2m}}((4,2,3,3,-12,1,-1),(0,-1,0,1,0,0),4 \mid t_{\clubsuit_2})\tfrac{p^{-2sm}}{m}\bigg)\\=&L_{p^2}\left(F((4,2,3,3,-12,1,-1),(0,-1,0,1,0,0),4 \mid t_{\clubsuit_2}),p^{1-s},\phi_{\sqrt{-1}}\right)\\=&L_{p}(F((4,2,3,3,-12,1,-1),(0,-1,0,1,0,0),4 \mid t_{\clubsuit_2}),\Q(i),p^{1-s},\phi_{\sqrt{-1}}).  
\end{align*}

Finally, after taking the product over all $p \notin S,$ we get \begin{align*}
&L_S(F((4,2,3,3,-12,1,-1),(0,-1,0,1,0,0),4 \mid t_{\clubsuit_2}),\Q(i),s-1,\phi_{\sqrt{-1}})\\=&\prod_{p \notin S}L_p(F((4,2,3,3,-12,1,-1),(0,-1,0,1,0,0),4 \mid t_{\clubsuit_2}),\Q(i),p^{1-s},\phi_{\sqrt{-1}}).    
\end{align*}
\end{proof}

\begin{thm}
\begin{align*}
L_S(X_{\Csf_2\Lsf_2,\psi},s)&=\zeta_{S,\Q(i\sqrt{3})}(s-1)^2\zeta_{S,\Q(i)}(s-1)\zeta_S(s-1)\cdot L_S\left(H(\bm{\alpha}_{\clubsuit_0},\bm{\beta}_{\clubsuit_0}\mid t_{\clubsuit_0}),s\right)\\&\cdot L_{S}\left(H\left(\bm{\alpha}_{\clubsuit_4},\bm{\beta}_{\clubsuit_4}\mid t_{\clubsuit_4}\right),s-1,\phi_{-12}\phi_{\psi}\right). 
\end{align*}
\end{thm}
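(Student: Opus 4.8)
The plan is to feed the point count of Proposition~\ref{prop: remainingpc} into the definition of the incomplete $L$-function, following verbatim the strategy used above for $\Csf_4$ and $\Csf_2\Fsf_2$. For $p\notin S$ we have
\[
\frac{1}{P_{\Csf_2\Lsf_2,\psi,p}(T)}=(1-T)(1-pT)(1-p^2T)\exp\!\left(\sum_{m=1}^{\infty}\#X_{\Csf_2\Lsf_2,\psi}(\F_{p^m})\frac{T^m}{m}\right),
\]
so the factor $(1-T)(1-pT)(1-p^2T)$ cancels the hyperplane contribution $1+p^m+p^{2m}$. Setting $T=p^{-s}$ and applying Proposition~\ref{prop: remainingpc}, what survives inside the exponential is the sum over $m\geq1$ of
\[
\Bigl(p^m+4p^m\delta[p^m\equiv1\Mod3]+2p^m\delta[p^m\equiv1\Mod4]+H_{p^m}(\bm{\alpha}_{\clubsuit_0},\bm{\beta}_{\clubsuit_0}\mid t_{\clubsuit_0})+\omega(-12\psi)^{(p^m-1)/2}p^m H_{p^m}(\bm{\alpha}_{\clubsuit_4},\bm{\beta}_{\clubsuit_4}\mid t_{\clubsuit_4})\Bigr)\frac{p^{-sm}}{m},
\]
and I would treat the five summands one at a time and then multiply the resulting Euler products over $p\notin S$.

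The three ``arithmetic'' terms are dispatched exactly as for $\Csf_4$. The term $p^mp^{-sm}=p^{(1-s)m}$ exponentiates to $(1-p^{1-s})^{-1}$ and contributes $\zeta_S(s-1)$. The term $4p^m\delta[p^m\equiv1\Mod3]$ is twice the one occurring for $\Csf_4$; sorting primes by their residue modulo $12$ (equivalently, split versus inert in $\Q(i\sqrt3)=\Q(\sqrt{-3})$) gives local factors $(1-p^{1-s})^{-4}$ and $(1-p^{2(1-s)})^{-2}$, i.e.\ $\zeta_{S,\Q(i\sqrt3)}(s-1)^2$. The identical computation modulo $4$, using that $p$ splits in $\Q(i)$ precisely when $p\equiv1\Mod4$, turns $2p^m\delta[p^m\equiv1\Mod4]$ into $\zeta_{S,\Q(i)}(s-1)$. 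Finally, by the definition of $L_q(H(\bm{\alpha},\bm{\beta}\mid t),T)$ in Section~\ref{S: Lseries}, the term $H_{p^m}(\bm{\alpha}_{\clubsuit_0},\bm{\beta}_{\clubsuit_0}\mid t_{\clubsuit_0})$ is exactly the $p$-part of $L_S(H(\bm{\alpha}_{\clubsuit_0},\bm{\beta}_{\clubsuit_0}\mid t_{\clubsuit_0}),s)$.

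For the last term I would first record the elementary fact that $\omega(-12\psi)^{(p^m-1)/2}=\bigl(\phi_{-12}(p)\phi_{\psi}(p)\bigr)^m$: an element of $\F_p^{\times}$ has quadratic residue symbol over $\F_{p^m}$ equal to the $m$-th power of its quadratic residue symbol over $\F_p$, since $(p^m-1)/2\equiv m(p-1)/2\pmod{p-1}$, and $\phi_{-12}\phi_{\psi}$ is the quadratic character $p\mapsto\left(\tfrac{-12\psi}{p}\right)$. Absorbing the $p^m$ into $p^{-sm}$ shifts the argument to $s-1$, so this term exponentiates to the $p$-part of the twisted $L$-function $L_S(H(\bm{\alpha}_{\clubsuit_4},\bm{\beta}_{\clubsuit_4}\mid t_{\clubsuit_4}),s-1,\phi_{-12}\phi_{\psi})$, using the Dirichlet-series description of the twist from Section~\ref{S: Lseries} (cf.\ \cite[(4.1.16)]{HD20}). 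Multiplying the five Euler products over $p\notin S$ then yields the stated factorization. The one place where the argument is not pure bookkeeping, and where Proposition~\ref{prop: remainingpc} must be invoked substantively, is the claim that the hypergeometric sum appearing in the point count is already in the normalized $_8F_7$ form $H_{p^m}(\bm{\alpha}_{\clubsuit_4},\bm{\beta}_{\clubsuit_4}\mid t_{\clubsuit_4})$ carrying the character weight exactly $\omega(-12\psi)^{(p^m-1)/2}$; this rests on the repeated applications of the Hasse--Davenport relation (Lemma~\ref{hd}) performed in the proof of that proposition, and one must verify that the leftover quadratic character produced there is precisely $\phi_{-12}\phi_{\psi}$ and not some other quadratic twist.
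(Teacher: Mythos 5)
Your proposal is correct and follows essentially the same route as the paper: cancel the hyperplane factor, split the point count from Proposition~\ref{prop: remainingpc} into its five summands, convert the $\delta[p^m\equiv1\Mod3]$ and $\delta[p^m\equiv1\Mod4]$ terms into Dedekind zeta factors by sorting primes according to their splitting in $\Q(\sqrt{-3})$ and $\Q(i)$, and identify the two hypergeometric terms with the (twisted) hypergeometric $L$-functions. Your explicit justification that $\omega(-12\psi)^{(p^m-1)/2}=(\phi_{-12}(p)\phi_{\psi}(p))^m$ is a detail the paper leaves implicit, and your closing caveat about the character produced by the Hasse--Davenport reductions correctly locates the only substantive input from Proposition~\ref{prop: remainingpc}.
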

\begin{proof}
By definition, $L_{S}(X_{\Csf_2\Lsf_2,\psi},s)=\prod_{p \notin S}P_{\psi}(p^{-s})^{-1}.$ We also have \begin{align*}
\frac{1}{P_{\psi}(T)}&=(1-T)(1-qT)(1-q^2T)Z(X_{\Csf_2\Lsf_2,\psi},T)=(1-T)(1-qT)(1-q^2T)\sum_{m=1}^{\infty}\#X_{\Csf_2\Lsf_2,\psi}(\F_{q^m})\frac{T^m}{m}.    
\end{align*} 

The factor $(1-T)(1-qT)(1-q^2T^2),$ which corresponds to the hyperplane section, cancels, and we obtain the following: \begin{align}\label{al: algn1}
\frac{1}{P_{\psi}(p^{-s})}=&\exp\left(\sum_{m=1}^{\infty}4\delta[p^m \equiv 1 \Mod 3]\frac{p^{(1-s)m}}{m}\right)\exp\left(2\delta[p^m \equiv 1 \Mod 4]\frac{p^{(1-s)m}}{m}\right)\notag\\&\exp\left(\sum_{m=1}^{\infty}\frac{p^{(1-s)m}}{m}\right) \exp\left(\sum_{m=1}^{\infty}H_{p^m}(\bm{\alpha}_{\clubsuit_0},\bm{\beta}_{\clubsuit_0}\mid t_{\clubsuit_0})\frac{p^{-sm}}{m}\right)\notag\\&\exp\left(\sum_{m=1}^{\infty}\left(\frac{-12\psi}{p^m}\right)H_{p^m}(\bm{\alpha}_{\clubsuit_4},\bm{\beta}_{\clubsuit_4}\mid t_{\clubsuit_4})\frac{p^{(1-s)m}}{m}\right).    
\end{align}

Suppose that $p \equiv 1 \Mod 4,$ then the factor of $\zeta_{\Q(i)}(s-1)$ corresponding to such a prime is given by $(1-p^{1-s})^{-2}.$ Moreover,\begin{align*}
\exp\left(\sum_{m=1}^{\infty}2p^m\delta[p^m \equiv 1\Mod 4]\frac{p^{-sm}}{m}\right)&=\left(\exp\left(-2\sum_{m=1}^{\infty}\frac{p^{-(s-1)m}}{m}\right)\right)^{-1}\\&=(\exp(\log(1-p^{1-s})^2))^{-1}\\&=(1-p^{1-s})^{-2}.  
\end{align*}

Suppose that $p \equiv 3 \Mod 4,$ then $p^{2n} \equiv 1 \Mod 4$ and $p^{2n+1} \equiv 3 \Mod 4.$ We then have \begin{align*}
\exp\left(\sum_{m=1}^{\infty}2p^m\delta[p^m \equiv 1\Mod 4]\frac{p^{-sm}}{m}\right)&=\exp\left(\sum_{n=1}^{\infty}2p^{2n}\frac{p^{-2sn}}{2n}\right)=\exp\left(\sum_{n=1}^{\infty}\frac{p^{2n(1-s)}}{n}\right)\\&=\left(\exp\left(-\sum_{m=1}^{\infty}\frac{p^{-2(s-1)m}}{m}\right)\right)^{-1}\\&=\exp(\log(1-p^{-2(s-1)}))^{-1}\\&=(1-p^{-2(s-1)})^{-1}.
\end{align*}

The factor of $\zeta_{\Q(i)}(s-1)$ corresponding to such type of primes is $1-(p^{1-s})^2.$ Therefore, we have that \begin{align*}
\prod_{p \notin S}\exp\left(\sum_{m=1}^{\infty}2p^m\delta[p^m \equiv 1\Mod 4]\frac{p^{-sm}}{m}\right)=\zeta_{S,\Q(i)}(s-1). 
\end{align*}

Now we compute $$\exp\left(\sum_{m=1}^{\infty}4\delta[p^m \equiv 1 \Mod 3]\frac{p^{(1-s)m}}{m}\right).$$ If $p \equiv 1,7 \Mod {12},$ then $p^m \equiv 1,7 \Mod {12},$ respectively and therefore $p^m \equiv 1 \Mod 3.$ So we have \begin{align*}
\exp\left(\sum_{m=1}^{\infty}4\delta[p^m \equiv 1 \Mod 3]\frac{p^{(1-s)m}}{m}\right)&=\exp\left(-4\sum_{m=1}^{\infty}\frac{p^{(1-s)m}}{m}\right)^{-1}\\&=\exp \left(\log((1-p^{1-s})^4)\right)^{-1}=(1-p^{1-s})^{-4}.
\end{align*}

If $p \equiv 5,11 \Mod{12},$ then $p^{2m} \equiv 1 \Mod 3$ and $p^{2m+1} \not\equiv 1 \Mod 3.$ Therefore, \begin{align*}
\exp\left(\sum_{m=1}^{\infty}4\delta[p^m \equiv 1 \Mod 3]\frac{p^{(1-s)m}}{m}\right)&=\exp \left(\sum_{m=1}^{\infty}\frac{4p^{(1-s)2m}}{2m}\right)=\exp \left(-2\sum_{m=1}^{\infty}\frac{p^{2(1-s)m}}{m} \right)^{-1}\\&=\exp(\log(1-p^{2(1-s)})^2)^{-1}=(1-p^{2(1-s)})^{-2}.   
\end{align*}

We conclude that \begin{align*}
\prod_{p \notin S}\exp\left(\sum_{m=1}^{\infty}4p^m\delta[p^m \equiv 1\Mod 3]\frac{p^{-sm}}{m}\right)=\zeta_{S,\Q(\sqrt{-3})}(s-1). 
\end{align*}

One can also easily see that  $$\prod_{p\notin S}\exp\left(\sum_{m=1}^{\infty}\frac{p^{(1-s)m}}{m}\right)=\zeta_S(s-1).$$

We now taking the product over $p \notin S$ in \eqref{al: algn1}, we obtain 
\begin{align*}
    L_{S}(X_{\Csf_2\Lsf_2,\psi},s)=&\zeta_{S,\Q(\sqrt{3}i)}(s-1)^2\zeta_{S,\Q(i)}(s-1)\zeta_S(s-1)\cdot\\&L_{S}(H(\bm{\alpha}_{\clubsuit_0},\bm{\beta}_{\clubsuit_0}\mid t_{\clubsuit_0}),s)L_{S}(H(\bm{\alpha}_{\clubsuit_4},\bm{\beta}_{\clubsuit_4}\mid t_{\clubsuit_4}),s-1,\phi_{-12}\phi_{\psi}).
\end{align*}  
\end{proof}

\begin{thm}
\begin{align*}
L_S(X_{\Csf_3\Fsf_1,\psi},s)&=L_{S}(\Q(\zeta_8)|\Q,s-1)\cdot L_S\left(H(\bm{\alpha}_{\spadesuit_0},\bm{\beta}_{\spadesuit_0} \mid t_{\spadesuit_0}),s\right). 
\end{align*}
\end{thm}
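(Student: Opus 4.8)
The plan is to follow the template of the three preceding theorems in this section. Starting from $L_S(X_{\Csf_3\Fsf_1,\psi},s)=\prod_{p\notin S}P_{\psi}(p^{-s})^{-1}$ together with the relation
\begin{align*}
\frac{1}{P_{\psi}(T)}=(1-T)(1-pT)(1-p^2T)\,Z_p(X_{\Csf_3\Fsf_1,\psi},T),\qquad Z_p(X,T)=\exp\!\Big(\sum_{m\ge1}\#X(\F_{p^m})\frac{T^m}{m}\Big),
\end{align*}
I would substitute the point count of Proposition~\ref{prop: remainingpc},
\begin{align*}
\#X_{\Csf_3\Fsf_1,\psi}(\F_{p^m})=p^{2m}+p^m+1+p^m\,c(p^m)+H_{p^m}(\bm\alpha_{\spadesuit_0},\bm\beta_{\spadesuit_0}\mid t_{\spadesuit_0}),
\end{align*}
where $c(q)=3$ if $q\equiv1\Mod8$ and $c(q)=-1$ otherwise. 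The hyperplane-section contribution $p^{2m}+p^m+1$ exponentiates to $\big((1-T)(1-pT)(1-p^2T)\big)^{-1}$ and is cancelled by the prefactor, leaving
\begin{align*}
\frac{1}{P_{\psi}(p^{-s})}=\exp\!\Big(\sum_{m\ge1}c(p^m)\,p^m\,\frac{p^{-sm}}{m}\Big)\cdot\exp\!\Big(\sum_{m\ge1}H_{p^m}(\bm\alpha_{\spadesuit_0},\bm\beta_{\spadesuit_0}\mid t_{\spadesuit_0})\,\frac{p^{-sm}}{m}\Big).
\end{align*}

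The second factor is by definition $L_p\big(H(\bm\alpha_{\spadesuit_0},\bm\beta_{\spadesuit_0}\mid t_{\spadesuit_0}),p^{-s}\big)^{-1}$, so taking the product over $p\notin S$ produces $L_S\big(H(\bm\alpha_{\spadesuit_0},\bm\beta_{\spadesuit_0}\mid t_{\spadesuit_0}),s\big)$. For the first factor I would split according to the class of $p$ modulo $8$; note that $2\in S$, so every relevant $p$ is odd. If $p\equiv1\Mod8$ then $c(p^m)=3$ for all $m$ and the exponential equals $(1-p^{1-s})^{-3}$. If $p\equiv3,5,7\Mod8$ then $p$ has order $2$ in $(\Z/8\Z)^{\times}$, so $c(p^m)=3$ for $m$ even and $-1$ for $m$ odd, and summing the even- and odd-indexed logarithmic series gives $(1-p^{1-s})^{-1}(1+p^{1-s})^{-2}$. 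These are exactly the Euler factors at $p$ of $\zeta_{\Q(\zeta_8)}(s-1)/\zeta(s-1)$: a prime $p\equiv1\Mod8$ splits completely in $\Q(\zeta_8)$, giving $(1-p^{1-s})^{-4}/(1-p^{1-s})^{-1}$, while for $p\equiv3,5,7\Mod8$ the Frobenius has order $2$ and there are two primes of residue degree $2$ above $p$, giving $(1-p^{2-2s})^{-2}/(1-p^{1-s})^{-1}=(1-p^{1-s})^{-1}(1+p^{1-s})^{-2}$. This is precisely the computation recorded in \cite[Proposition 4.4.1, equation (4.4.3)]{HD20} and already used in the proof of the $\Csf_2\Fsf_2$ theorem above, so the product of the first exponential over $p\notin S$ equals $L_S(\Q(\zeta_8)\mid\Q,s-1)$.

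Multiplying over all $p\notin S$ then yields $L_S(X_{\Csf_3\Fsf_1,\psi},s)=L_S(\Q(\zeta_8)\mid\Q,s-1)\cdot L_S\big(H(\bm\alpha_{\spadesuit_0},\bm\beta_{\spadesuit_0}\mid t_{\spadesuit_0}),s\big)$, as claimed. I do not expect a serious obstacle in this L-function step: the substantive input is the point count of Proposition~\ref{prop: remainingpc}, which is assumed here, and the passage from the $c(q)$-term to $L_S(\Q(\zeta_8)\mid\Q,s-1)$ is the same splitting argument used in \cite{HD20}. The only points needing care are the bookkeeping of the factor $p^m$ attached to the indicator (so that its Dirichlet-series variable is shifted to $s-1$ rather than $s$) and the observation that $\Csf_3\Fsf_1$ has trivial symmetry group with $\dim W_\psi=0$ (Tables~\ref{table: classif} and~\ref{table: kloost}), which is why no additional twisted hypergeometric factors appear and the decomposition has only the two stated terms.
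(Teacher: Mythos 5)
Your proposal is correct and follows essentially the same route as the paper: cancel the hyperplane-section term $q^2+q+1$ against the prefactor, identify the hypergeometric sum with $L_S(H(\bm{\alpha}_{\spadesuit_0},\bm{\beta}_{\spadesuit_0}\mid t_{\spadesuit_0}),s)$, and convert the $q\cdot c(q)$ term into $L_S(\Q(\zeta_8)\mid\Q,s-1)$, which the paper handles by citing \cite[Proposition 4.4.1, (4.4.3)]{HD20} while you verify the Euler factors $(1-p^{1-s})^{-3}$ and $(1-p^{1-s})^{-1}(1+p^{1-s})^{-2}$ directly. The extra detail is consistent with the cited computation, so there is no gap.
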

\begin{proof}
We have that \begin{align*}
    \#X_{\Csf_3\Fsf_1}(\F_q)=q^2+q+1+q\begin{cases}
3 &\text{ if } q \equiv 1 \Mod{8}\\
-1 &\text{ if } q\not \equiv 1 \Mod{8} 
\end{cases}+H_q(\bm{\alpha}_{\spadesuit_0},\bm{\beta}_{\spadesuit_0} \mid t_{\spadesuit_0}).
\end{align*} 

The first summand $q^2+q+1$ cancels as in the previous cases, and the last summand $H_q(\bm{\alpha}_{\spadesuit_0},\bm{\beta}_{\spadesuit_0} \mid t_{\spadesuit_0})$ gives the $L$-function $L_{S}(H(\bm{\alpha}_{\spadesuit_0},\bm{\beta}_{\spadesuit_0} \mid t_{\spadesuit_0}),s).$ Therefore, we only need to look at the remaining term \begin{align*}
q\begin{cases}
3 &\text{ if } q \equiv 1 \Mod{8} \\
-1 &\text{ if } q\not \equiv 1 \Mod{8}
\end{cases}.    
\end{align*} By \cite[Proposition 4.4.1, equation (4.4.3)]{HD20}, we see that this summand gives $L_{S}(\Q(\zeta_8)\mid \Q,s-1).$   
\end{proof}

\begin{theorem}
\begin{align*}
L_S(X_{\Csf_2\Csf_2,\psi},s)&=\zeta_S(s-1) \cdot \zeta_{S,\Q(i\sqrt{3})}(s-1)^2 \cdot L_S(H(\bm{\alpha}_{\symking_0},\bm{\beta}_{\symking_0}\mid t_{\symking_0}),s)\\& \cdot L_S(H(\bm{\alpha}_{\symking_1},\bm{\beta}_{\symking_1}\mid t_{\symking_1}),s-1,\phi_{-6\psi}) \\&\cdot L_S(F((2,1,2,1,-6),(0,0,1,-1,0),3\mid t_{\symking_3}),\Q(\sqrt{-3}),s-1)\\&\cdot L_S(F((2,1,2,1,-6),(3,0,-1,1,3),6\mid t_{\symking_3}),\Q(\zeta_6),s-1,\phi_{-6\psi}).
\end{align*}
\end{theorem}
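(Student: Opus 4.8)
The plan is to follow the template of the four theorems just proved, reading the local Euler factors of $L_S(X_{\Csf_2\Csf_2,\psi},s)$ off the explicit point count in Proposition~\ref{prop: remainingpc}. Write $L_S(X_{\Csf_2\Csf_2,\psi},s)=\prod_{p\notin S}P_{\psi}(p^{-s})^{-1}$ and use $\tfrac{1}{P_{\psi}(T)}=(1-T)(1-pT)(1-p^2T)\,Z_p(X_{\Csf_2\Csf_2,\psi},T)$ with $Z_p(X_{\Csf_2\Csf_2,\psi},T)=\exp\!\big(\sum_{m\ge 1}\#X_{\Csf_2\Csf_2,\psi}(\F_{p^m})\tfrac{T^m}{m}\big)$; substituting the formula for $\#X_{\Csf_2\Csf_2,\psi}(\F_q)$, the cube $(1-T)(1-pT)(1-p^2T)$ cancels the $q^2+q+1$ coming from the hyperplane section, exactly as in the $\Csf_4$ and $\Csf_3\Fsf_1$ proofs. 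It then remains to match each surviving summand of the point count with an Euler factor on the right-hand side and take the product over $p\notin S$.

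For the algebraic part I would argue as in the proof for $\Csf_2\Lsf_2$: the remaining rational contribution of $\#X_{\Csf_2\Csf_2,\psi}(\F_{p^m})$ of order at most $q$ is grouped according to whether $p$ splits or is inert in $\Q(i\sqrt{3})=\Q(\zeta_3)$ (for $p\equiv 1\Mod 3$ every power $p^m$ is $\equiv 1\Mod 3$, whereas for $p\equiv 2\Mod 3$ only the even powers are), and the same $\exp/\log$ bookkeeping that produced $\zeta_{S,\Q(\sqrt{-3})}(s-1)^2$ and $\zeta_S(s-1)$ there now yields $\zeta_S(s-1)\cdot\zeta_{S,\Q(i\sqrt{3})}(s-1)^2$.

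For the hypergeometric part I would take the summands one at a time. The term $H_q(\bm{\alpha}_{\symking_0},\bm{\beta}_{\symking_0}\mid t_{\symking_0})$, defined over $\Q$ by Table~\ref{table: alphaBetaSubscripts}, gives $L_S(H(\bm{\alpha}_{\symking_0},\bm{\beta}_{\symking_0}\mid t_{\symking_0}),s)$ directly. The term $\omega(-6\psi)^{\frac{\q}{2}}\,q\,H_q(\bm{\alpha}_{\symking_1},\bm{\beta}_{\symking_1}\mid t_{\symking_1})$ gives $L_S(H(\bm{\alpha}_{\symking_1},\bm{\beta}_{\symking_1}\mid t_{\symking_1}),s-1,\phi_{-6\psi})$: the extra factor $q$ produces the shift $s\mapsto s-1$ and $\omega(-6\psi)^{\frac{\q}{2}}=\phi_{-6\psi}(q)$ is the quadratic twist, exactly as $\phi_{-12}\phi_{\psi}$ entered for $\Csf_2\Lsf_2$. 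The genuinely new feature is the two $\delta[q\equiv 1\Mod 3]$-weighted pairs of gamma-triple sums, which is where the $L$-function formalism for gamma triples from \S\ref{S:finiteHyp} is used: the pair $F_q((2,1,2,1,-6),(0,0,1,-1,0),3\mid t_{\symking_3})$ and $F_q((2,1,2,1,-6),(0,0,-1,1,0),3\mid t_{\symking_3})$ are exchanged by $\text{Gal}(\Q(\zeta_3)/\Q)\simeq(\Z/3\Z)^{\times}$ (note $(0,0,-1,1,0)\equiv 2\cdot(0,0,1,-1,0)\Mod 3$), so for a split prime $p$ the two of them together make up $L_p(F((2,1,2,1,-6),(0,0,1,-1,0),3\mid t_{\symking_3}),\Q(\sqrt{-3}),p^{1-s})$, while for an inert $p$ only the even powers $p^{2n}$ occur, $(\Z/3\Z)^{\times}/\langle p\rangle$ is trivial, and Lemma~\ref{lm: Fp} together with $t_{\symking_3}\in\F_p$ identifies the two conjugate sums with one another and hence with the single $L_{p^2}$-factor in the definition of $L_p(F(\cdots),\Q(\zeta_N),T)$. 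The parallel computation with the $N=6$ triple, whose hypergeometric parameters are $\bm{\alpha}_{\symking_3},\bm{\beta}_{\symking_3}$ but whose Gauss-sum shift $\bm{\delta}/N$ differs, together with the twist $\phi_{-6\psi}$, produces $L_S(F((2,1,2,1,-6),(3,0,-1,1,3),6\mid t_{\symking_3}),\Q(\zeta_6),s-1,\phi_{-6\psi})$. Collecting all the local factors over $p\notin S$ assembles the stated decomposition.

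The main obstacle is the gamma-triple bookkeeping in the last step. One has to check carefully that the pair of conjugate $F_q$'s in Proposition~\ref{prop: remainingpc} is exactly the orbit indexed by $(\Z/N\Z)^{\times}/\langle p\rangle$ in the definition of $L_p(F(\cdots),\Q(\zeta_N),T)$, keep the $N=3$ and $N=6$ triples straight (they share hypergeometric parameters but differ in the twist hidden in $\bm{\delta}/N$), and verify that the character $\phi_{-6\psi}$, viewed as a Dirichlet character over $\Q(\zeta_6)$, is compatible both with the factor $\omega(-6\psi)^{\frac{\q}{2}}$ in the point count and with the inert-prime collapse coming from Lemma~\ref{lm: Fp}, introducing no spurious signs. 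The remaining manipulations are the same routine $\exp/\log$ identities already used in the earlier proofs.
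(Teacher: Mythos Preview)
Your proposal is correct and follows essentially the same approach as the paper's own proof: you cancel the hyperplane section against $q^2+q+1$, identify the remaining $p+4p\,\delta[p\equiv 1\Mod 3]$ with $\zeta_S(s-1)\cdot\zeta_{S,\Q(i\sqrt{3})}(s-1)^2$ exactly as in the $\Csf_2\Lsf_2$ argument, match the two rational hypergeometric sums to $L_S(H(\symking_0),s)$ and its quadratic twist, and then handle each pair of conjugate gamma-triple sums by the split/inert dichotomy in $\Q(\zeta_3)=\Q(\zeta_6)$, invoking Lemma~\ref{lm: Fp} at inert primes. The only point you make more explicit than the paper is the observation $(0,0,-1,1,0)\equiv 2\cdot(0,0,1,-1,0)\Mod 3$, which is precisely why the two $F_q$'s form a single $(\Z/N\Z)^{\times}/\langle p\rangle$-orbit.
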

\begin{proof}
We have \begin{align*}
\#X_{\Csf_2\Csf_2,\psi}(\F_q)&=q^2+2q+1+4q\delta[q \equiv 1 \Mod{3}]+H_q(\bm{\alpha}_{\symking_0},\bm{\beta}_{\symking_0}\mid t_{\symking_0})+\phi_{-6\psi}(q)qH_q(\bm{\alpha}_{\symking_1},\bm{\beta}_{\symking_1}\mid t_{\symking_1})\\&+\delta[q \equiv 1 \Mod{3}]q(F_q((2,1,2,1,-6),(0,0,-1,1,0),3\mid t_{\symking_3})\\&+F_q((2,1,2,1,-6),(0,0,1,-1,0),3\mid t_{\symking_3}))\\&+\delta[q \equiv 1 \Mod{3}]\phi_{-6\psi}(q)q(F_q((2,1,2,1,-6),(3,0,-1,1,3),6\mid t_{\symking_3})\\&+F_q((2,1,2,1,-6),(3,0,1,-1,3),6\mid t_{\symking_5})).   
\end{align*} 
The term $q^2+q+1$ cancels as before; the term $4q\delta[q \equiv 1 \Mod{3}]$ was present in the computation for $\Csf_2\Lsf_2$ and has already been identified. Consider the term $$\delta[q \equiv 1 \Mod{3}]q(F_q((2,1,2,1,-6),(0,0,-1,1,0),3\mid t_{\symking_3})+F_q((2,1,2,1,-6),(0,0,1,-1,0),3\mid t_{\symking_3})).$$ The corresponding extension is $\Q(\sqrt{-3}) \mid \Q.$ 

If $p \equiv 1,7 \Mod{12},$ then $p$ is a quadratic residue modulo 12 and consequently it splits into two primes in $\Q(\sqrt{-3}),$ each with norm 1. Moreover, $p^m \equiv 1 \Mod{12}$ for $m$ even and $p^m \equiv 7 \Mod{12}$ for $m$ odd. In any case, we have $p^m \equiv 1 \Mod{3}.$ 

Observe that in this case, $(\Z/3\Z)^{\times}/\langle p \rangle \simeq \Z/2\Z$ because $p \equiv 1 \Mod{3}.$ Therefore,

\begin{align*}
\exp&\bigg(-\sum_{m=1}^{\infty}p^mF_{p^m}((2,1,2,1,-6),(0,0,1,-1,0),3\mid t_{\symking_3}))\tfrac{p^{-sm}}{m}\\&-\sum_{m=1}^{\infty}p^mF_{p^m}((2,1,2,1,-6),(0,0,1,-1,0),3\mid t_{\symking_3})\tfrac{p^{-sm}}{m}\bigg)\\=&L_p\left(F((2,1,2,1,-6),(0,0,1,-1,0),3\mid t_{\symking_3}),p^{1-s}\right)\cdot L_p\left(F((2,1,2,1,-6),(0,0,1,-1,0),3\mid t_{\symking_3}),p^{1-s}\right)\\=&\prod_{k \in \Z/2\Z}L_p\left(F((2,1,2,1,-6),k(0,0,1,-1,0),3\mid t_{\symking_3}),p^{1-s}\right)\\=&L_p(F((2,1,2,1,-6),(0,0,1,-1,0),3\mid t_{\symking_3}),\Q(\sqrt{-3}),p^{1-s}).  
\end{align*} 

If $p \equiv 5,11 \Mod{12},$ then $p$ is not a quadratic residue modulo 12 and consequently $p$ is inert in $\Q(\sqrt{-3}).$ Observe that in this case, $p \equiv 2 \Mod{3},$ so the group 
$(\Z/3\Z)^{\times}/\langle p \rangle=\{1,2\}/\{1,2\}$ is trivial. 

Moreover, $p^m \equiv 1 \Mod{3}$ for $m$ even and $p^m \equiv 2 \Mod{3}$ for $m$ odd. Hence, for $m$ even, it follows from Lemma \ref{lm: Fp} that $$F_{p^m}((2,1,2,1,-6),(0,0,-1,1,0),3\mid t_{\symking_3})=F_{p^m}((2,1,2,1,-6),(0,0,1,-1,0),3\mid t_{\symking_3})$$ and we get the summand as follows 

\begin{align*}
\exp&\bigg(-\sum_{m=1}^{\infty}\epsilon^m p^{2m}F_{p^{2m}}((2,1,2,1,-6),(0,0,1,-1,0),3\mid t_{\symking_3})\tfrac{p^{-2sm}}{m}\bigg)\\=&L_{p^2}\left(F((2,1,2,1,-6),(0,0,1,-1,0),3\mid t_{\symking_3}),p^{1-s}\right)\\=&L_{p}(F((2,1,2,1,-6),(0,0,1,-1,0),3\mid t_{\symking_3}),\Q(\sqrt{-3}),p^{1-s}).  
\end{align*}

Now consider the last term $$\delta[p \equiv 1 \Mod{3}]\phi_{-6\psi}(p)p(F_p((2,1,2,1,-6),(3,0,-1,1,3),6\mid t_{\symking_3})+F_p((2,1,2,1,-6),(3,0,1,-1,3),6\mid t_{\symking_5})).$$ We have $\Q(\zeta_6)=\Q(\sqrt{-3}),$ so the splitting behaviour is the same as in the previous term. Moreover, since we are working only with odd prime numbers, $$p \equiv 1 \Mod{6} \Leftrightarrow p \equiv 1 \Mod{3}.$$ Doing a similar analysis as we did in the previous term, we obtain the last remaining factor $$L_S(F((2,1,2,1,-6),(3,0,-1,1,3),6\mid t_{\symking_3}),\Q(\zeta_6),s-1,\phi_{-6\psi}).$$
\end{proof}

\section*{Data Availability Statement}

Data sharing does not apply to this article as no datasets were generated or analyzed during the current study.

\section*{Declarations}

\subsection*{Funding}

The authors were supported by ICERM through the Collaborate@ICERM program. Gomes Ribeiro acknowledges the support received from the School of Mathematics at the University of Birmingham through a PhD scholarship; she was also supported by the University of Sydney and the University of Birmingham Turing Scheme during her visit to Sydney, when revisions and corrections to this work were made. Orvis acknowledges that he received support through an IBM PhD Fellowship, and from the University of Colorado, Boulder during work on this paper. Gomes Ribeiro, Salerno and Whitcher were supported by the ICTP during the Winter School on Number Theory and Physics. 

\subsection*{Conflict of interest}

The authors have no conflict of interest to declare.

\section*{References}
\printbibliography[heading=none]
\appendix 
\end{document}